\newcommand{\bea}{\begin{eqnarray}}
\newcommand{\ena}{\end{eqnarray}}
\newcommand{\beas}{\begin{eqnarray*}}
\newcommand{\enas}{\end{eqnarray*}}
\newtheorem{theorem}{Theorem}
\newtheorem{corollary}[theorem]{Corollary}
\newtheorem{conjecture}[theorem]{Conjecture}
\newtheorem{proposition}[theorem]{Proposition}
\newtheorem{lemma}[theorem]{Lemma}
\newtheorem{exercise}[theorem]{Exercise} 
\theoremstyle{definition}
\newtheorem{example}[theorem]{Example}
\def\bex{\begin{example}}
\def\eex{\end{example}}
\numberwithin{theorem}{section}
\numberwithin{figure}{section}                
\newcommand{\ignore}[1]{}
\def\bbbone{1}
\def\indist{=^d}
\def\todist{\Rightarrow}
\def\bbbe{\mathbb{E}}
\def\bbbe{\mathbb{E}\mkern1mu}   %  \,  is 3mu
\def\bbbp{\mathbb{P}}
\def\bbbr{\mathbb{R}}
\def\BX{{\bf X}}
\def\BY{{\bf Y}}
\def\BZ{{\bf Z}}
\def\BXi{{\bf X}^{(i)}}
\def\p{\bbbp}   %\def\p{{\bf P}}
\def \e {\bbbe}   %\def \e {{\bf E}}
\def\BR{\bbbr}
\def\bbbz{\mathbb{Z}}
\def\bbbn{\mathbb{N}}
\def\levy{L\'evy }
\def\cF{\mathcal{F}}
\def\cL{\mathcal{L}}
\def\cT{\mathcal{T}}
\def\t{{\ell}}  % need a dummy variable for time, in a context where t is a tree
\begin{document}

\title
%{ Size biasing:  when is the increment independent?}
%{Size bias, the waiting time paradox, and infinite divisibility}
%{Size bias, sampling, the waiting time paradox, and infinite divisibility: when is the increment independent?}
{Size bias for one and all}
%\runtitle{Size bias, sampling, the waiting time paradox, and infinite divisibility}

\author[Arratia]{Richard Arratia}
\address[Richard Arratia]{Department of Mathematics, University of Southern California,
Los Angeles CA 90089.}
\email{rarratia@usc.edu}

\author[Goldstein]{Larry Goldstein}
\address[Larry Goldstein]{Department of Mathematics, University of Southern California, Los Angeles CA 90089.}
\email{larry@usc.edu}

\author[Kochman]{Fred Kochman}
\address[Fred Kochman]{Center for Communications Research, 805 Bunn Drive
Princeton, NJ 08540}
\email{kochman@idaccr.org}

%\date{June 25, 2013}
%   BASED ON    \date{October 1, 2015 July 20, using Larry's comments}
\date{September 7, 2018}

\begin{abstract}
Size bias occurs famously in waiting-time paradoxes, undesirably in sampling schemes, and 
unexpectedly in connection with Stein's method, tightness,  analysis of the lognormal distribution, Skorohod
embedding, infinite
divisibility, branching processes,
and
number theory. In this paper we review the basics and survey some of these unexpected connections.
\end{abstract}

\maketitle

%\baselineskip=2 \baselineskip
%\newpage

\tableofcontents

\section{Prologue: the waiting time paradox}
\label{sec:waiting}

In the famous ``waiting time paradox'', see Feller \cite[Section I.4]{feller2},  there
are two plausible but conflicting analyses of the waiting time for the
next bus, once you get to the bus stop. More formally, this paradox concerns
the waiting time $W_t$ for the next arrival, 
starting from an arbitrary instant $t$, in a standard homogeneous Poisson process with intensity 
parameter $\lambda=1$:
(a) The lack of memory of the exponential interarrival time 
%$X$, i.e.~the property $\p(X>r+s|X>s)=\p(X>r)$, 
suggests that 
$\e W_t$ is not sensitive to the choice of $t$; so $\e W_t = \e W_0 =1$.  \ (b) Since the 
starting time is chosen uniformly in the interval between two successive arrivals, an interval of 
mean length 1, symmetry suggests that $\e W_t = 1/2$. 

As Feller shows, the reasoning behind \emph{both} analyses is faulty, because it is the 
instant and not the 
interval which is arbitrary: a longer interval thereby becomes more likely than the 
relative frequencies of interarrival lengths would suggest, a canonical instance of size biasing. 
So an unqualified appeal to properties of the original interarrival distribution is fallacious. 

In fact, as we will discuss, a reasonable but precise interpretation of ``arbitrary instant'' 
leads to the answer given in (a), though not for the reason given in (a). 
 
Not just recreational chestnuts, but also practical matters, such as statistical sampling tasks, are bedeviled by size 
bias; we provide a few references later.
Surprisingly, however, size bias  %it 
plays a role in such unexpected contexts as  Stein's method, 
Skorohod embedding, nonuniqueness in the method of moments, infinite
divisibility of distributions, branching processes, and number theory. 
We will return to 
the ``paradox'' shortly, after giving the basics of size bias. Then we will survey size
bias as it appears in some of the non-sampling 
contexts.\footnote{An early draft of the present paper, with the title `Size biasing, when is the increment independent?', 
has been circulated since 1998, and was cited in \cite{pitman03}; an update,
`Size bias, sampling, the waiting time paradox, and infinite
divisibility: when is the increment independent?'
was cited in \cite{pekoz,archimedes,ross1}.
Both of these drafts are superseded  %supercede looks right, don't it
 by this
paper.}

In \cite[pp. 78--80]{abt}, the authors introduce their two and one half page survey of size bias by saying ``Size-biasing arises naturally in
statistical  sampling theory (cf. Hansen and Hurwitz (1943) \cite{hh},
Midzuno (1952) \cite{midzuno} and Gordon (1993) \cite{gordon}), and
the results we present below are all well known in the folk
literature.''
In the present paper, we feel that we have contributed
a number of new results:
the conceptual heuristic given in Section
 \ref{sec:renewal} to explain  \eqref{noniid}, where a sum of independent variables is size biased by biasing only a single term,
 the explanation of an intimate
connection between uniform integrability and tightness in Section
\ref{sect UI},  
%the size bias perspective on Chihara--Leipnik example in Section \ref{sect lognormal}  (in particular the Choquet simplex result therein), 
the size bias perspective on Skorohod embedding in \ref{sect skorohod}, and the treatment of infinite
divisibility in Section  \ref{sect inf div}
--- at least 
the argument based on \eqref{inf div sum},  size biasing a sum by size biasing a single
summand.

Another survey of size bias, with a different focus,  is \cite{brown}.

\section{Size bias basics}\label{sect basics}

\subsection{Bias in general}\label{bias general}

Let $h$ be a nonnegative function, and $X$ be a random variable taking
values in the domain of $h$, with   $ \e h(X) \in (0,\infty)$. 
For such $X$ and $h$, we say $X^h$ has the $h$-biased $X$ 
distribution if and only if the distribution of $X^h$, relative to
the distribution of $X$, has
Radon-Nikodym derivative given by
\bea
\label{h bias}
\frac{\p(X^h \in dx)}{\p(X \in dx)} = \frac{h(x)}{\e h(X)}.
\ena
The support of (the distribution of) $X^h$ is then a subset of the
support of $X$, possibly a proper subset due to the set  where $h$=0:
\begin{equation}\label{support}
\text{supp}(X^h)=(\text{supp}(X) \setminus
  h^{-1}(0))^{\text cl},
\end{equation}
where $A^{\text cl}$ denotes the closure of $A$.
%A nice example which shows this visually is presented in Figure \ref{figure cantor}.
A nice pair of examples,
 %to show visually that
both having equal support for $X$ and $X^*$, and 
% the supports of $X$ and $X^h$ are equal,
 using $h(x)=x$, is presented in Figure \vref{figure cantor}.

The class of exponential functions, $h(x)=e^{\beta \, x}$ for various choices of $\beta \in
(-\infty,\infty)$, is very important.  This class is  central to exponential families and large deviation
theory, but no single value $\beta$ plays a special role.
The family of power functions $h(x)=x^\beta$ for $\beta>0$ might be viewed as  runner up, behind the family of exponential functions, but here the  choice $\beta=1$ is truly special. We believe that $h(x)=x$  for $x \ge 0$ is \emph{the most important} example of bias.

\subsection{Size bias in particular}\label{size particular}
 
When $h$ is the function $h(x)=x$ with domain $[0,\infty)$, the
$h$-bias above is called size bias.  Thus, one can size bias the
distribution of any nonnegative random variable $X$ for which $a := \e
X \in (0,\infty)$.   Instead of $X^h$ one writes $X^*$ or $X^{\rm s}$ %$X^{\rm SB}$ 
for a random
variable with the size-biased distribution of $X$. The
characterization \eqref{h bias} reduces to
\bea
\label{sbdF}\label{size bias RN}
\frac{\p(X^* \in dx)}{\p(X \in dx)} = \frac{x}{a}.
\ena

For the common special cases, where $X$ is discrete with probability
mass function $f$, or where $X$ is 
absolutely continuous
with density $f$, the formula
\bea \label{sizebias-f}
f_{X^*}(x)=\frac{xf(x)}{a},
\ena
completely specifies the  size-biased distribution.

Does size bias commute with conditioning, on events of the form $(X \in B)$?   The answer, of course, is yes --- provided  that   $\p(B)>0$. %both can be done. 
This is made obvious using the bias-in-general viewpoint of Section
\ref{bias general}:  any two biasings commute, because  multiplication is commutative.  In detail:  
suppose that $g$, like $h$ in Section \ref{bias general}, is a nonnegative function whose domain includes the support of $X$, and $\e g(X) \in (0,\infty)$.   Then one can bias with respect to $g$, to specify the distribution of $X^g$.  Elementary conditioning, on the event $(X \in B)$, is precisely the case where $g$ is the indicator function for $B$;  in this case $\e g(X)=\p(B)<\infty$, and  in the phrase \emph{elementary conditioning}, the word elementary means that $\p(B)>0$.  Back to the general case: suppose that the product $gh$ (the pointwise product, 
%$(gh)(x)=g(x)h(x)$, 
not the composition $(g \circ h)(x)=g(h(x))$,) also has strictly positive, finite expectation, i.e.,  $\e (g(X) h(X) ) \in (0,\infty)$.  Then the iterated biased distributions,  of $(X^h)^g$, 
and of $(X^g)^h$ are, of course, equal to each other,
since they are both the same as the distribution of $X^{(gh)}$.

An interesting case of \eqref{sizebias-f} involves the Poisson
distributions.   Starting from the assumption that the distribution of $X$ is
Poisson ($\lambda)$,
so that $f(k) = e^{-\lambda}\lambda^k/k!$, then \eqref{sizebias-f}
with $x=k+1$ for $k=0,1,2,\ldots$ gives
\begin{equation}\label{baby poisson}
f_{X^*}(k+1)=\frac{(k+1)f(k+1)}{\lambda} = \frac{k+1}{\lambda}
e^{-\lambda} \frac{\lambda^{k+1}}{(k+1)!}=f(k).
\end{equation}
Hence for $X$ with a Poisson ($\lambda)$ distribution, $0 < \lambda < \infty$,
\begin{equation} \label{add one}
X^* \indist X+1.
\end{equation}
The above result is sometimes called \emph{Robbins' Lemma}.

%Conversely, if $X$ is a
%nonnegative \emph{integer-valued} random variable with mean $\lambda
%\in (0,\infty)$, and   $X^* =^d X+1$, then  \eqref{sizebias-f} shows that for $k \ge 0$, the point mass function
%$f$ for $X$ satisfies $f(k+1)=\lambda \, f(k)/(k+1)$, hence by
%induction $f(k) = f(0) \lambda^k/k!$, hence $X$ is Poisson
%($\lambda)$.  This observation may be viewed as the starting point for
%Stein's method;  see Section \ref{sect chen}.

Conversely,
\begin{proposition}\label{proposition baby}
Suppose that  $X$ is a
nonnegative \emph{integer-valued} random variable with mean $\lambda
\in (0,\infty)$, and   $X^* =^d X+1$.  Then  $X$ is Poisson
($\lambda)$.
\end{proposition}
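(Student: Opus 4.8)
The plan is to translate the distributional identity $X^* \indist X+1$ into a recursion for the probability mass function of $X$ and then solve it. Write $f(k) = \p(X = k)$ for $k = 0, 1, 2, \ldots$; since $\lambda = \e X \in (0,\infty)$, the size-biased variable $X^*$ is well defined and, as $X$ is $\BZ_{\ge 0}$-valued, the discrete formula \eqref{sizebias-f} applies, giving $f_{X^*}(k) = k f(k)/\lambda$. The hypothesis $X^* \indist X + 1$ says exactly that $f_{X^*}(k+1) = f(k)$ for every $k \ge 0$, that is,
\begin{equation*}
\frac{(k+1)\, f(k+1)}{\lambda} = f(k), \qquad k = 0, 1, 2, \ldots,
\end{equation*}
equivalently $f(k+1) = \frac{\lambda}{k+1}\, f(k)$. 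This is precisely the reverse of the computation \eqref{baby poisson}, now run as a constraint rather than a consequence.

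Next I would iterate the recursion from $k=0$ to get $f(k) = f(0)\,\lambda^k/k!$ for all $k \ge 0$. Since $f$ is a probability mass function it sums to $1$; as $\sum_{k \ge 0}\lambda^k/k! = e^\lambda < \infty$, normalization forces $f(0) = e^{-\lambda}$ (in particular $f(0) \ne 0$, for otherwise every $f(k)$ would vanish and $f$ could not sum to $1$). Hence $f(k) = e^{-\lambda}\lambda^k/k!$, which is the Poisson$(\lambda)$ mass function, and one checks afterward that this distribution indeed has mean $\lambda$, consistent with the hypothesis. An equivalent route uses the probability generating function $g(t) = \e\, t^X$: the mass-function identity above is the same as $t\,g'(t)/\lambda = t\,g(t)$, i.e.\ $g' = \lambda g$ on $[0,1]$, whose only solution with $g(1) = 1$ is $g(t) = e^{\lambda(t-1)}$, again the Poisson$(\lambda)$ generating function.

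There is essentially no serious obstacle in this argument; the only points that deserve a word of care are that the size bias is well defined (guaranteed by $0 < \lambda < \infty$), that the solution of the recursion cannot be identically zero (forced by $f$ being a probability distribution, which also pins down $f(0)$), and that $X$ being genuinely integer-valued is what lets us use the elementary pointwise recursion rather than the general Radon--Nikodym form \eqref{size bias RN}.
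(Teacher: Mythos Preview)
Your proof is correct and follows essentially the same route as the paper: derive the recursion $f(k+1)=\lambda f(k)/(k+1)$ from \eqref{sizebias-f} and $X^*\indist X+1$, iterate to $f(k)=f(0)\lambda^k/k!$, and normalize using $\sum_k f(k)=1$. The generating-function remark is a pleasant addition but not needed.
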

\begin{proof}
Equation \eqref{sizebias-f} shows that for $k \ge 0$, the point mass function
$f$ for $X$ satisfies $f(k+1)=\lambda \, f(k)/(k+1)$, hence by
induction $f(k) = f(0) \lambda^k/k!$.  The assumption that  $X$ is 
nonnegative \emph{integer-valued} implies that $\sum_{k \ge 0} f(k)
=1$, hence $1=\sum_{k \ge 0} f(0) \lambda^k/k! = f(0) e^\lambda$.
\end{proof}
The observation that  a nonnegative integer-valued random variable $X$
is Poisson($\e X$) if and only if $X^* =^d X+1$ may be viewed as the starting point for
Stein's method for the Poisson distribution;  see Section \ref{sect chen}.

It is also true that if $X\ge 0$ and $0 < \lambda:= \e X < \infty$ and $X^* =^d
X+1$, (without assuming that $X$ is  \emph{integer-valued},) then $X$
is Poisson ($\lambda)$. This is not so obvious, and the reader
might enjoy giving his own elementary proof, by combining the support
consideration \eqref{support} with \eqref{sizebias-f};  alternately, see   Theorem
\ref{thm inf div and Levy} and Corollary \ref{cor poisson}.

If $X$ is Bernoulli($p$), meaning that $\p(X=1)=p, \p(X=0)=1-p$, and if $p>0$, then $X$ can be size biased.  Using either the support consideration \eqref{support}, or the mass function formula 
\eqref{sizebias-f}, we see that 
\begin{equation}\label{bernoulli}
\text{ for } X \sim \text{Bernoulli}(p), 0 < p \le 1, \ \
X^* =1.
\end{equation}
This Bernoulli family example shows that the size bias transformation is not one to one. 

It is easy to see that (\ref{sbdF}) is implied by
\begin{equation}\label{sizebias continuous}
\text{ for all bounded \emph{continuous} } g, \ \ \e  g(X^*) =  \frac{1}{a} \ \e (X g(X) ),
\end{equation}
and that \eqref{sbdF} implies\footnote{Some would say, by definition,  \eqref{sbdF} means exactly
\eqref{sizebias},  others might say that by definition,  \eqref{sbdF} means 
\eqref{sizebias} restricted to $g$ being the indicator function of a measurable subset of $[0,\infty)$.}
\begin{equation}\label{sizebias}
\text{ for all bounded \emph{measurable} } g, \ \ \e  g(X^*) =  \frac{1}{a} \ \e (X g(X) ).
\end{equation}
%Henceforth in this paper, test functions $g$ are required to be measurable.  

Even in the discrete and absolutely continuous cases,
where the elementary identity \eqref{sizebias-f} applies, the characterization
of size bias via \eqref{sizebias} is very handy for manipulations.

%  \subsubsection{Characteristic function} Richard's change, July 12, 2018
\subsubsection{Generating functions}
Let $\phi \equiv \phi_X$ be the characteristic function of $X$, so
that $\phi(u) = \e e^{iuX}$.  A standard fact, for example from
\cite[XV.4 Lemma 2]{feller2},  is that $\e |X| < \infty$ implies $\phi$
is differentiable, with $\phi'(u) =  i\e (Xe^{iuX})$.  With
$g(x)=e^{iux}$,
by taking real and imaginary parts, \eqref{sizebias} implies that for
nonnegative $X$ with $a := \e X \in (0,\infty)$,
\begin{equation}\label{phi'}
    \phi_{X^*}(u) :=   \e e^{iuX^*} =  \frac{1}{a}  \e (Xe^{iuX}) =
    \frac{1}{i \,  a} \phi_X'(u),
\end{equation}
and since characteristic functions determine distribution,
\eqref{phi'} also completely specifies the size bias distribution.
Suppressing the dummy variable to get a clean display, \eqref{phi'}
says
$$  \phi_X' = i \  \e X \ \phi_{X^*} \ .
$$

In case the nonnegative random variable $X$ above is integer valued, one
could use probability generating functions instead of characteristic
functions to characterize the distributions of $X$ and $X^*$.  With random
variable name $N$ in place of $X$, and $p_n := \p(N=n)$, we have
$G_N(z)=\sum p_n z^n$ with derivative $G_N'(z) = \sum n p_n z^{n-1}$.
So if $0 < \e N < \infty$, the generating function for the size biased random variable is
$$
   G_{N^*}(z) := \sum_{n \ge 0} \p(Z^*=n) z^n = 
 \sum \frac{n p_n}{\e N} z^n = \frac{z}{\e N} \ G_N'(z); 
 \ G_{(N^*-1)}(z) = \frac{1}{\e N} \ G_N'(z) \ .
$$
Suppressing the dummy variable to get a clean display, this relation is
\begin{equation}\label{G'}  
G_N' = \e N \ G_{(N^*-1)} \ .
\end{equation}

\subsubsection{Compound distributions for random sums}
Here is an application of \eqref{phi'}.
Suppose $N$ is a nonnegative integer valued random variable, with
finite strictly positive mean, and $X,X_1,X_2,\ldots$ are i.i.d.,
independent of $N$.  With $S_n := X_1+\cdots+X_n$ and $Z=S_N =
X_1+\cdots + X_N$, the
distribution of the random sum $Z$ is called a \emph{compound
  distribution},  although the phrase itself is often used to refer to
mixtures in general.  With the notation $G$ for % the 
probability generating functions, % REDUNDANT so  that  $G_N(y) = \sum_{n \ge 0} \p(N=n) \, y^n$, 
and $\phi$ for
characteristic functions, 
the characteristic function of $Z=X_1+\cdots+X_N$ is $\phi_Z = G_N \circ
\phi_X$.  Now if $X \ge 0$ and $0 < \e X < \infty$, so that $X$ can be
size biased, then $Z$ can also be size biased. From \eqref{phi'} we
have
$$
    \phi_{Z^*}(u)  =
    \frac{1}{i \, \e Z} \phi_Z'(u) =
    \frac{1}{i \, \e Z} (G_N(\phi_X(u)))' 
$$
so from the chain rule, and $\e Z = \e N \, \e X$, and \eqref{G'},  we have 
\begin{eqnarray}\label{phi' compound}
    \phi_{Z^*}(u) & = &     \frac{1}{i \, \e Z} \times G_N'(\phi_X(u)) \times \phi_X'(u) \nonumber  \\
      & = &   \frac{1}{i \, \e N  \, \e X} \times \e N
    \   G_{(N^*-1)}(\phi_X(u)) \times i\  \e X \ \phi_{X^*}(u) \nonumber \\
 & = &  G_{(N^*-1)}(\phi_X(u)) \times  \phi_{X^*}(u).   \label{compound bias}
\end{eqnarray}
Since a product of two characteristic functions gives the distribution
of the sum of two independent random variables,  \eqref{compound bias}
specifies a rule:

\noindent {\bf Rule for size biasing a random sum}.  To size bias a
random sum $Z=S_N$ of $N$ independent copies of $X$, where both $N$ and $X$ are
nonnegative, with strictly positive finite mean:  (1) Size bias  $N$
to get $N^*$, and then take
\emph{one less},
 $N^* -1$, as the new compounding variable, and (2) Add in an
independent copy of the size biased version of the summand, $X^*$.
To say the same, less formally:  size bias the number of summands, and
replace one summand by a size biased version.

Of course, specializing $N$ to be concentrated at a fixed positive integer $n$ immediately
yields a rule for size biasing a sum of $n$ independent identically
distributed terms.  
However, we will rederive that rule, as \eqref{iid} 
in Section \ref{sect bias sum} below, 
%as a special case of a different method.
which studies how to size bias a sum of (a nonrandom number of)
random variables, with no need to assume either independence or
identical distribution for the summands.

\subsubsection{Unbounded functions, and moments}
Recall that for a real valued random variable, ``$\e Y \in [-\infty,\infty]$ exists'' means that it is \emph{not} 
 the case that \emph{both} the positive and the negative parts of $Y$ have infinite expectation.  We extend slightly the %usual 
 statement that, if $\e |X g(X)|< \infty$, then $\e g(X^*) =  \ \e (X g(X) )/\e X$.
 
\begin{lemma}\label{lemma monotone}
Let $g: [0,\infty) \to \BR$ be measurable, and let $X$ be a nonnegative random variable with $a := \e X \in (0,\infty)$.
\begin{equation}\label{dominated}
\text{ If } \e (X g(X)) \in [-\infty,\infty] \text{ exists},  \text{ then} \ \ \e  g(X^*) =  \frac{1}{a} \ \e (X g(X) ).
\end{equation}
If $\e (X g(X))$ doesn't exist in $[-\infty,\infty]$, then neither does %the same is the case for 
$\e g(X^*)$.
 \end{lemma}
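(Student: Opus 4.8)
The plan is to bootstrap from the identity \eqref{sizebias}, which is already available for bounded measurable $g$, up to an arbitrary measurable $g$ by truncation followed by the monotone convergence theorem. The structural fact I will use repeatedly is that, since $X \ge 0$, the positive and negative parts of $Xg(X)$ are exactly $Xg^+(X)$ and $Xg^-(X)$, and likewise $(g(X^*))^\pm = g^\pm(X^*)$. This is precisely where nonnegativity of $X$ enters, and it is what prevents the two parts from interfering.

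First I would treat the case $g \ge 0$. For each $n$ the truncation $g_n := g \wedge n$ is bounded and measurable, so \eqref{sizebias} gives $\e\, g_n(X^*) = \tfrac1a\, \e (X g_n(X))$, and both sides are finite because $0 \le X g_n(X) \le nX$ with $\e X = a < \infty$. As $n \uparrow \infty$ we have $g_n(x) \uparrow g(x)$ for every $x \ge 0$, hence $g_n(X^*) \uparrow g(X^*)$ a.s.\ and $X g_n(X) \uparrow X g(X)$ a.s.; by monotone convergence both sides pass to the limit and
\begin{equation*}
\e\, g(X^*) = \frac1a\, \e (X g(X)) \in [0,\infty].
\end{equation*}

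Next, for general measurable $g$, write $g = g^+ - g^-$ and apply the previous step to each of $g^+$ and $g^-$, obtaining $\e\, g^\pm(X^*) = \tfrac1a\, \e (X g^\pm(X)) \in [0,\infty]$. Since $X \ge 0$, this says $\e (Xg(X))^+ = a\,\e\, g^+(X^*)$ and $\e (Xg(X))^- = a\,\e\, g^-(X^*)$. Now $\e(Xg(X))$ fails to exist in $[-\infty,\infty]$ exactly when both of these are $+\infty$, which (as $0 < a < \infty$) happens exactly when both $\e\, g^+(X^*)$ and $\e\, g^-(X^*)$ are $+\infty$, i.e.\ exactly when $\e\, g(X^*)$ fails to exist; this is the last sentence of the lemma. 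When they do exist, subtracting (legitimately, since not both terms are $+\infty$) gives $\e(Xg(X)) = a\,\e\, g^+(X^*) - a\,\e\, g^-(X^*) = a\,\e\, g(X^*)$, which is \eqref{dominated}.

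The only point needing care — and hence the ``main obstacle'', mild as it is — is ensuring nothing is lost in the limit and that no $\infty - \infty$ ambiguity arises: the bound $X g_n(X) \le nX$ with $\e X < \infty$ keeps each truncated identity finite, monotone convergence handles the passage to the limit, and the nonnegativity of $X$ provides the clean split into positive and negative parts so that the two invocations of the $g \ge 0$ case never collide.
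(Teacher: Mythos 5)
Your argument is correct and follows the same route as the paper's proof: truncate, invoke \eqref{sizebias} for bounded $g$, pass to the limit by monotone convergence in the nonnegative case, then split $g=g^+-g^-$ and use $X\ge0$ to identify $(Xg(X))^\pm$ with $Xg^\pm(X)$. (You even silently correct the paper's typo of $\max(g(x),n)$ for what should be $\min(g(x),n)$ in the truncation.)
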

 \begin{proof}
 In outline, the proof is:  consider separately the positive and negative parts of $g$;   for each of these,
 apply \eqref{sizebias} to truncations, and apply monotone
 convergence.

   In detail:
when $g(x) \ge 0$, by applying \eqref{sizebias} to
$g_n(x)=\max(g(x),n)$, and taking limits, we conclude that
$$
 \e  g(X^*) =  \frac{1}{a} \ \e (X g(X) )
$$
holds, \emph{including} the case where both sides are infinite.
Write $y_+$ and $y_-$ for the positive and negative parts of $y$.
Then the functions $g_+$ and $g_-$ given by $g_+(x) = (g(x))_+$ and $g_-(x)=(g(x))_-$ 
are nonnegative. 
Note that on the domain $[0,\infty)$, $(x g(x))_+ = x g_+(x)$ and $(x g(x))_- = x g_-(x)$.
Under the hypothesis that $\e (X g(X)) \in [-\infty,\infty]$ exists,  %In our situation, 
at least one of $h = g_+$ and $h=g_-$ has $ \e (X h(X)) < \infty$, and hence $\e g(X^*) = \e g_+(X^*) - \e g_-(X^*) \in [-\infty,\infty]$ is well defined, with value given by $(1/a) \e( Xg_+(X)) -(1/a) \e(X g_-(X)) 
= (1/a) \e (X(g(X))$.   Likewise, when $\e ((Xg(X))_+) = \e ((Xg(X))_-) = \infty$, we have  both $ \e g_+(X^*) = \e g_-(X^*) =\infty$ so that $\e g(X^*)$ does not exist.
\end{proof}

In particular,
taking
  $g(x)=x^n$ in \eqref{dominated}, we have
\begin{equation}\label{moment shift}
\e (X^*)^n=\e X^{n+1}/\e X
\end{equation}
%whenever $\e |X^{n+1}|<\infty$. 
and this includes the case where both sides are infinite.
 Apart from the extra scaling by $1/\e X$, \eqref{moment shift} says that the sequence of moments
of $X^*$ is the sequence of moments of $X$, but shifted by one.
Hence one  way to recognize size biasing is through
the  \emph{shift of the moment sequence}; this plays a role in two interesting examples,
\eqref{borel moments} and \eqref{lognormal moment}.
%we give an example at the end of this section, and another  in Section \ref{sect lognormal}.

\subsubsection{Stochastic monotonicity}
It is easy to see that, in general, $X^*$ lies
above $X$ in distribution, i.e., $\p(X^*>t) \geq \p(X>t)$ for all $t$.
In detail:  
letting $g(x) = \bbbone(x>t)$ in (\ref{sizebias}) for some fixed $t$, 
\begin{equation}\label{lies above}
\p(X^*>t)= \frac{\e (X\bbbone(X>t))}{\e X}  \ \ge \  \frac{ \e X \  \e  \bbbone(X>t)}{\e X} =\p(X>t)
%\p(X^*>t)= \frac{1}{a}\ \e (X\bbbone(X>t)) \ \ge \  \frac{1}{a}\ \e X \  \e  \bbbone(X>t) =\p(X>t)
\end{equation}
where the  inequality above is the special case $f(x)=x$, $g(x) =\bbbone(x>t) $
of Chebyschev's correlation inequality: $\e (f(X) g(X)) \geq
\e f(X) \ \e g(X)$ for any random variable and any
two increasing functions $f,g$.

The condition $\p(X^*>t) \geq \p(X>t)$ for all $t$ is described as ``$X^*$ lies
above $X$ in distribution,'' written $X \le_{st} X^*$, and implies  that there exist couplings of
$X^*$ and $X$ in which always $X \le X^*$.  Writing $Y$ for  the
difference, we have
 \begin{equation}
\label{xplusy}
X^*=X+Y, \ \ Y \geq 0.
  \end{equation}
In general, the known marginals for $X$ and $X^*$ do not uniquely determine the distribution of a coupling; in Section \ref{sect inf div} we will study the question:  when can \eqref{xplusy} be achieved with $X,Y$ independent?  

Suppose the distribution of $Z$ is defined to be that of $X$, conditional on $(X>0)$.  Recalling the third paragraph
of Section \ref{size particular}, it is obvious that $X^* =^d Z^*$.   And of course, 
$Z$ lies above $X$ in distribution
since for $t \ge 0$, $\p(X>t|X>0) = \p(X>t)/\p(X>0) \ge \p(X>t)$. 
To summarize, for nonnegative $X$ with $\e X \in (0,\infty)$, we have the stochastic monotonicity sandwich
$$    
     X   \le_{st} (X|X>0) \le_{st} X^*.
$$

\subsubsection{Scaling, coupling, and limits in distribution}
It is easy to see, from \eqref{sizebias}, that size biasing respects
multiplication by positive constants, that is, with  $c>0$, 
\begin{equation}\label{scaling}
(cX)^*=^d c(X^*).
\end{equation}
The notation used above, $X =^d Y$, is often written 
${\mathcal  L}(X)={\mathcal L}(Y)$, to say that random variables $X$ and $Y$
have the same law, or distribution. The simpler notation $X=Y$ would
imply a coupling, i.e., that $X$ and $Y$ are defined on the same
probability space, with $X(\omega)=Y(\omega)$ for all outcomes $\omega$. 
\ignore{
JOKE FOR MY COAUTHORS:
As an exercise in notation: scaling also respects a size bias coupling:  if $X,Y$ are defined on the same probability space, and
$X^* =^d Y$, then $cX, cY$ are also defined on the same probability space (with each other, and of course also with $X,Y$),
and $(cX)^* =^d cY$.   The first vector named, $(X,Y)$, is a size bias coupling, and its scaled multiple, $c(X,Y)$, is also a size bias coupling.
SERIOUSLY, should we add some comment that "All three authors of this paper have been bedeviled by the issue, when to write $=$ and when to write $=^d$.
}

It is also true that size bias respects convergence in distribution,
provided one is careful to make the additional hypothesis that the means converge to the mean of the limit random variable, which is in this context equivalent to uniform integrability.   
%We use the notation $X_n \Rightarrow X$ to denote convergence in distribution.
\begin{theorem}\label{limit theorem}
Suppose that $X,X_1,X_2,\ldots$ are nonnegative random variables with
$a:= \e X \in (0,\infty)$, $a_n := \e X_n \in (0,\infty)$, that  $X_n \Rightarrow X$, and that $a_n \to a$.  Then
$$
    X_n^* \Rightarrow X^*.
$$
\end{theorem}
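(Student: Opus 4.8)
The plan is to work with characteristic functions, using the representation \eqref{phi'} which says $\phi_{X_n^*}(u) = \phi_{X_n}'(u)/(i a_n)$ and similarly for $X$. Since $X_n \Rightarrow X$ gives pointwise convergence $\phi_{X_n} \to \phi_X$, and since $a_n \to a \in (0,\infty)$ by hypothesis, it would suffice to show that $\phi_{X_n}'(u) \to \phi_X'(u)$ pointwise in $u$, because then $\phi_{X_n^*}(u) \to \phi_{X^*}(u)$ for every $u$, and \eqref{phi'} says this limit is the characteristic function of $X^*$; L\'evy continuity then gives $X_n^* \Rightarrow X^*$. So the whole problem reduces to upgrading convergence of the $\phi_{X_n}$ to convergence of their derivatives.

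The natural device is to rewrite $\phi_{X_n}'(u) = i\,\e(X_n e^{iuX_n})$ and recognize $\e(X_n e^{iuX_n}) = a_n\, \e e^{iuX_n^*} = a_n\, \phi_{X_n^*}(u)$ — but that is circular, so instead I would argue directly that the family $\{X_n\}$ is uniformly integrable. This is where the hypothesis $a_n \to a$ does its work: by a standard fact, if $X_n \Rightarrow X$ with all variables nonnegative and $\e X_n \to \e X < \infty$, then $\{X_n\}$ is uniformly integrable (this is the ``Scheff\'e-type'' converse the paragraph before the theorem alludes to; Section~\ref{sect UI} presumably contains exactly this). Granting uniform integrability, I would fix $u$ and show $\e(X_n e^{iuX_n}) \to \e(X e^{iuX})$: split each expectation at a truncation level $M$, bound the tail $\e(X_n \bbbone(X_n > M))$ uniformly in $n$ by uniform integrability, and on the bounded part $\{x \le M\}$ use that $x \mapsto x e^{iux}$ is bounded and continuous on $[0,M]$ together with $X_n \Rightarrow X$ (taking $M$ a continuity point of the law of $X$) to pass to the limit. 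Letting $M \to \infty$ closes the estimate.

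The main obstacle — really the only nonroutine point — is the implication ``$X_n \ge 0$, $X_n \Rightarrow X$, $\e X_n \to \e X$'' $\implies$ uniform integrability of $\{X_n\}$. One clean way to see it: for any $M$ that is a continuity point of the law of $X$, $\e(X_n \wedge M) \to \e(X \wedge M)$ by bounded convergence applied to the continuous bounded function $x \mapsto x\wedge M$, so $\e(X_n - X_n\wedge M) = \e X_n - \e(X_n\wedge M) \to \e X - \e(X\wedge M) = \e(X - X\wedge M)$, and the right side tends to $0$ as $M\to\infty$ since $\e X < \infty$; because $X_n \bbbone(X_n > M) \le 2(X_n - X_n \wedge M)$, this forces $\sup_n \e(X_n \bbbone(X_n > M)) \to 0$. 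Alternatively, if Section~\ref{sect UI} states this equivalence, I would simply cite it. An honest write-up would also remark that the nonnegativity of the $X_n$ is essential here (it is what makes $\e X_n \to \e X$ equivalent to uniform integrability rather than merely implied by it), and that without the condition $a_n \to a$ the conclusion can fail — e.g. $X_n$ equal to $n$ with probability $1/n$ and $0$ otherwise has $X_n \Rightarrow 0$ but $X_n^* \equiv n \Rightarrow \infty$.
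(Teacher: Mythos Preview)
Your approach via characteristic functions and uniform integrability is sound in outline but takes a longer route than the paper, and contains one slip: the inequality $X_n\bbbone(X_n>M)\le 2(X_n-X_n\wedge M)$ is false as written (try $X_n=\tfrac{3}{2}M$); what is true is $X_n\bbbone(X_n>2M)\le 2(X_n-X_n\wedge M)$, which still yields uniform integrability after reindexing the truncation level. Also, Section~\ref{sect UI} does not state the implication you hope to cite; it relates uniform integrability of $\{X_\alpha\}$ to \emph{tightness of} $\{X_\alpha^*\}$, which is a different (and in fact closely related) statement, so you would still need to supply the direct argument you sketched.

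The paper's proof bypasses uniform integrability entirely. It tests $X_n^*\Rightarrow X^*$ only against bounded continuous $h$ \emph{with compact support}; then $g(x):=xh(x)$ is itself bounded and continuous, so $X_n\Rightarrow X$ gives $\e g(X_n)\to\e g(X)$ immediately, and dividing by $a_n\to a>0$ finishes in one line via \eqref{sizebias}. Your truncation-at-$M$ argument is doing by hand what compact support of $h$ accomplishes for free. What your route does buy is an explicit verification that $\{X_n\}$ is uniformly integrable under these hypotheses --- a worthwhile fact in its own right, and arguably the conceptual content of the hypothesis $a_n\to a$ --- but for the bare conclusion the paper's device is much shorter.
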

\begin{proof}
%The desired convergence in distribution is implied by the condition that for all
%bounded continuous functions with compact support, $h: \BR \to \BR$,
%we have $\e h(X_n^*) \to \e h(X)$.  
Let  \mbox{$h: \BR \to \BR$} be a 
bounded continuous function  with compact support.  
Then the function $g$ given by $g(x)=x
\, h(x)$ is bounded and continuous.  Since  $g$ is bounded, 
%\, h(x)$ is bounded\footnote{using here the hypothesis that $h$ has compact support} and continuous.  Since  $g$ is bounded, 
\eqref{sizebias} applies, and since $g$ is continuous, the
hypothesized distributional convergence implies $\e g(X_n) \to \e
g(X)$. Using \eqref{sizebias} with $h$ in the role of $g$, we have
$$
   \e h(X_n^*)= \frac{\e X_n h(X_n)}{a_n} = \frac{\e g(X_n)}{a_n} \to
   \frac{\e g(X)}{a} = \frac{\e X h(X)}{a} = \e h(X^*).
$$  
\end{proof}

The necessity of the hypothesis that $\e X > 0$, in Theorem \ref{limit theorem}, is shown  by the example with $X_n$ distributed as  
Bernoulli($1/n$), so that $X_n^* \Rightarrow 1$,
and  $X_n \Rightarrow X = 0$, but the limit random variable 
$X$ cannot be size biased.

The converse of Theorem \ref{limit theorem} is %of course 
false, since
the correspondence ${\mathcal L}(X) \mapsto {\mathcal L}(X^*)$ is many
to one. In detail,  take any $A,B$ with  $A \ne^d B$ and $A^* =^d B^*$; then the sequence $X_1,X_2,X_3,X_4,\ldots = A,B,A,B, \ldots$, together with $X=A$, has $X_n^* \Rightarrow X^*$ but not $X_n \Rightarrow X$. 

An interesting natural example, related to the non-converse of Theorem \ref{limit theorem},
involves $X_n$ which cannot be rescaled to have a nontrivial limit distribution, while the corresponding $X_n^*$ can.
Take
$X_n$ to have the Borel distribution\footnote{For $\lambda \in [0,1)$, 
one says $X$ has the Borel($\lambda$) distribution if $X$ is the total progeny in the subcritical Galton-Watson branching process where the individual offspring distribution is Poisson with mean $\lambda$, equivalently,
$\p(X=i)=\exp(-\lambda \, i) (\lambda \, i)^{i-1}/i!$ for $i=1,2,\ldots$; see
\cite{aldous-pgw}.}  with parameter $\lambda=1-1/n$.  Calculation shows that $\e (X_n) = n$ and for $k=1,2,\ldots,$ 
$\e (X_n)^{k+1} \sim n^{2k+1} (2k-1) (2k-3) \cdots 5 \times 3 \times 1$,  
hence one cannot scale the $X_n$ 
sequence
to get a nontrivial distributional
limit.  But using \eqref{moment shift}, we have 
\begin{equation}\label{borel moments}
\e (X_n^*)^k \sim 
 %n^{2k} (2k+1) (2k-1) \cdots 5 \times 3 \times 1, OLD VERSION
 n^{2k} (2k-1) (2k-3) \cdots 5 \times 3 \times 1, %updated June 7, 2015
 \end{equation}
 so that with $Z$
 for a standard normal, $X_n^*/n^2 \Rightarrow Z^2$.

\subsubsection{Mixtures, biasing a conditional probability}

% ARE THESE topics intrinsically related?

First, we give Lemma \ref{lemma mixture}, an elementary result on how to size bias a mixture of distributions. An application of Lemma 
\ref{lemma mixture}  will be given by Lemma \ref{lemma Leipnik}  and the subsequent 
Theorem \ref{thm ra}.  Mixtures are often discussed in conjunction with regular conditional probabilities; see for example \cite{chow-teicher,loeve}.

Suppose that $I \subset \BR$, that $h$ is a probability measure on $I$, that for each $b \in I$ $\mu_b$ is a distribution for
a nonnegative random variable $X_b$, with $m(b) := \e X_b \in (0,\infty)$, and that $b \mapsto \mu_b$ is measurable.  Note, we have assumed that for every $b$, $m(b) \in (0,\infty)$ in order that, for every $b$, 
the size-biased distribution for $X_b^*$ be defined.  We say that
(the distribution of) $X$ is the mixture of (the distributions of) $X_b$, governed by $h$, if for all bounded measurable $g$,
$$
   \e g(X) = \int  \e g(X_b) \ dh(b).
$$   
Of course, for such a mixture, $\e X = \int m(b) \ dh(b) \in (0,\infty]$, but since we are interested is size bias, we make the additional
assumption that $a := \e X < \infty$.
\def\hs{h^s}
\begin{lemma}\label{lemma mixture}
Under the setup of the previous paragraph, with $a = \int m(b) \ dh(b) \in (0,\infty)$, the distribution of $X^*$ is a mixture of the distributions of the $X_b^*$.  
The measure $\hs$ governing this mixture is defined in terms of the original governor $h$ via its Radon-Nikodym derivative, $d\hs(b)/dh(b) = m(b)/a$.  In particular, if
$m(b)$ is constant, then $\hs=h$, i.e., the measure  governing  $X^*$ as a mixture of the $X_b^*$ is equal to the measure governing $X$
as a mixture of the $X_b$.
\end{lemma}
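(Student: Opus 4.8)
The plan is to verify the characterization \eqref{sizebias} directly for the candidate mixture. That is, I would define $h'$ by $dh'(b)/dh(b) = m(b)/a$ (noting this is a genuine probability measure since $\int m(b)\, dh(b) = a$), let $Y$ be the random variable whose distribution is the mixture of the $X_b^*$ governed by $h'$, and show that for all bounded measurable $g$, $\e g(Y) = (1/a)\, \e(X g(X))$; by \eqref{sizebias} this identifies $\L(Y) = \L(X^*)$, which is exactly the claim.

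First I would write out the left-hand side using the definition of a mixture: $\e g(Y) = \int \e g(X_b^*)\, dh'(b)$. Then I would apply Lemma \ref{lemma monotone} (or simply \eqref{sizebias}, since $g$ is bounded) termwise to rewrite $\e g(X_b^*) = \e(X_b g(X_b))/m(b)$ for each $b$. Substituting the Radon-Nikodym derivative, $\int \frac{\e(X_b g(X_b))}{m(b)} \cdot \frac{m(b)}{a}\, dh(b) = \frac{1}{a}\int \e(X_b g(X_b))\, dh(b)$. Now the function $x \mapsto x g(x)$ is bounded (since $g$ is bounded, but $x$ ranges over $[0,\infty)$) — hmm, that is not bounded. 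So instead I would note that $x \mapsto x g(x)$, while not bounded, is dominated in absolute value by $C x$ with $C = \sup|g|$, and $\int \e(X_b)\, dh(b) = a < \infty$, so the mixture identity for $X$ extends from bounded $g$ to $g$ with linear growth by monotone/dominated convergence (truncating $X_b$ at level $N$ and letting $N \to \infty$). Hence $\int \e(X_b g(X_b))\, dh(b) = \e(X g(X))$, giving $\e g(Y) = (1/a)\,\e(X g(X))$ as desired.

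The main obstacle, such as it is, is the measurability and integrability bookkeeping: one must check that $b \mapsto \e(X_b g(X_b))$ is measurable (which follows from the assumed measurability of $b \mapsto \mu_b$ together with a monotone-class argument) and that $m(b) > 0$ everywhere is exactly what is needed for the termwise division to be legitimate — this is why the hypothesis $m(b) \in (0,\infty)$ for every $b$ was flagged in the setup. The extension of the mixture formula from bounded test functions to test functions of linear growth is the one genuinely non-formal step, but it is routine given $a < \infty$. The final sentence of the lemma, that $h' = h$ when $m(b)$ is constant, is then immediate: if $m(b) \equiv m$, then $m = a$ and $dh'/dh \equiv 1$.
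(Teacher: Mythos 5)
Your proof is correct and is essentially the paper's own computation, just run in the opposite direction: the paper starts from $\e g(X^*)=\e(Xg(X))/a$, rewrites $\e(Xg(X))$ as $\int \e(X_b g(X_b))\,dh(b)$, and regroups the factor $m(b)/a$ to produce $\int \e g(X_b^*)\,dh'(b)$, while you begin from the mixture $\int \e g(X_b^*)\,dh'(b)$ and unwind it to $\e(Xg(X))/a$. The one point where you go beyond the paper's terse chain of equalities is in noticing, and explicitly patching, the fact that the mixture identity $\e g(X) = \int \e g(X_b)\,dh(b)$ is stated only for bounded $g$, whereas the middle step applies it to the linearly-growing function $x\mapsto xg(x)$ --- your truncation/monotone-convergence argument using $a<\infty$ correctly supplies that extension, which the paper leaves implicit.
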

\begin{proof}
For bounded measurable $g$
$$
   \e g(X^*) = \frac{\e( X g(X))}{a}  =  \int \frac{\e (X_b g(X_b))}{m(b)} \ \frac{m(b)\  dh(b)}{a} = \int  \e g(X_b^*) \ d\hs(b).
$$   
\end{proof}

In a different direction,
the following result from \cite{penrose} %of Goldstein and Penrose 
can be useful for constructing size bias couplings for continuous random variables that are not represented as sums, 
%see \cite{penrose}, section 4 %[17], 
though it may also be noted %where it is also explained 
that Lemma \ref{lemma 2.1}  implies %results including (12) 
\eqref{noniid}
for sums of indicator variables, see \cite[Lemma 2.6 ff]{BGI}.

\begin{lemma}\label{lemma 2.1}
Let $X = \Pr(A|\mathcal{F})$ where $\mathcal{F}$ is some $\sigma$-algebra and $A$ is some
event with $0<\Pr(A) <1$. Then $X^*$ has the distribution of $X$
conditioned on $A$. 
\end{lemma}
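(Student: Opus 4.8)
The plan is to verify the defining relation \eqref{sizebias} directly: I must show that for every bounded measurable $g$, the random variable $Y$ with the law of $X$ conditioned on $A$ satisfies $\e g(Y) = \e(X g(X))/\e X$. Since \eqref{sizebias} characterizes the size-biased distribution, this suffices. So first I would compute $\e X = \e \Pr(A\mid\mathcal F) = \Pr(A)$, which is finite and, by the hypothesis $0 < \Pr(A) < 1$, lies in $(0,\infty)$, so the size bias of $X$ is well defined.

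Next I would compute the right-hand side $\e(X g(X))$. The key observation is that $X = \Pr(A\mid\mathcal F)$ is $\mathcal F$-measurable, so $g(X)$ is $\mathcal F$-measurable, and therefore
\[
\e\bigl(X g(X)\bigr) = \e\bigl(g(X)\,\Pr(A\mid\mathcal F)\bigr) = \e\bigl(g(X)\,\e(\bbbone_A\mid\mathcal F)\bigr) = \e\bigl(g(X)\,\bbbone_A\bigr) = \e\bigl(\bbbone_A\, g(X)\bigr),
\]
using the defining property of conditional expectation (pulling the $\mathcal F$-measurable factor $g(X)$ inside, then the tower property). On the other hand, if $Y$ denotes a random variable with the conditional law of $X$ given $A$, then by definition of conditioning on an event, $\e g(Y) = \e(\bbbone_A g(X))/\Pr(A)$. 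Combining these gives $\e g(Y) = \e(X g(X))/\e X$, which is exactly \eqref{sizebias}, so $Y =^d X^*$.

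I do not anticipate a serious obstacle here; the argument is short once one notices that the conditioning $\sigma$-algebra $\mathcal F$ is precisely what makes $X$, hence $g(X)$, measurable enough to absorb $\Pr(A\mid\mathcal F)$ via the tower property. The only point requiring a little care is the bookkeeping around the event $A$ versus its indicator and the normalization by $\Pr(A)$, i.e. making sure ``$X$ conditioned on $A$'' is interpreted as the sub-probability measure $B \mapsto \Pr(\{X \in B\}\cap A)$ renormalized by $\Pr(A)$ — but this is routine and the hypothesis $\Pr(A) > 0$ guarantees it is legitimate.
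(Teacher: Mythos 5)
Your proof is correct and is essentially the same as the paper's: both hinge on the observation that $g(X)$ is $\mathcal F$-measurable, so $\e(Xg(X)) = \e(g(X)\e(\bbbone_A\mid\mathcal F)) = \e(g(X)\bbbone_A)$ by pulling $g(X)$ inside the conditional expectation and applying the tower property, after which one divides by $\e X = \p(A)$. The paper simply writes the chain of equalities starting from $\e g(X^*)$ and ending at $\e[g(X)\mid A]$, whereas you verify the defining relation \eqref{sizebias}; these are the same argument read in opposite directions.
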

\begin{proof}
For any bounded measurable $g$, we have 
$$
\e g(X^*) = \frac{\e(g(X)\e(1_A|\mathcal{F}))}{\e X} =
\frac{\e ( \e( g(X)   1_A|\mathcal{F}))}{\p(A)} $$ $$
=\frac{ \e(g(X)1_A)}{\p(A)}=  \e[g(X)|A].
$$
\end{proof}

\subsubsection{Many to one, one to one}
We describe the preimage, under size biasing, of a random variable
$Z$.  Note first that if $Z =^d X^*$, then for any mixture $\mathcal{M}
= b \delta_0 +(1-b)\mathcal{L}(X)$
with $0\le b <1$, a random variable $Y$ with $\mathcal{L}(Y) =\mathcal{M}$ 
is also a preimage.  We claim that changing the amount of point-mass at $0$
is the only source
of non-uniqueness.

\begin{lemma}\label{lemma biject}
A random variable $Z$ satisfies $Z=^d X^*$ for some $X$ iff $1=\p(Z>0)$
and $\e (1/Z) < \infty$, 
and then there is a unique law for $Y>0$ such that any $X$ having $X^*=^dZ$ %preimage 
is distributed as $b\delta_0 +(1-b)\mathcal{L}(Y)$ for some $0\le b
<1$.
\end{lemma}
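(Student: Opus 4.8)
The plan is to characterize the image of the size-bias map and then invert it. First I would establish necessity: if $Z =^d X^*$ for a nonnegative $X$ with $a = \e X \in (0,\infty)$, then from \eqref{size bias RN} the distribution of $X$ puts no mass on $\{0\}$ after the transformation, so $\p(Z > 0) = 1$; moreover $\e(1/Z) = \e(1/X^*) = \frac1a \e(X \cdot \frac1X \bbbone(X>0)) = \frac{\p(X>0)}{a} < \infty$ by \eqref{sizebias}. This also shows that from $Z$ we can recover the relevant normalizing constant.

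Next I would do the construction, which is the heart of the matter. Given $Z$ with $\p(Z>0)=1$ and $c := \e(1/Z) < \infty$, define a probability distribution $\mathcal{L}(Y)$ by its Radon–Nikodym derivative against $\mathcal{L}(Z)$, namely $\p(Y \in dz)/\p(Z \in dz) = (1/z)/c$; this is exactly the $h$-bias of $Z$ with $h(z)=1/z$, so it is a legitimate probability measure supported on $(0,\infty)$, and by \eqref{support} its support contains no atom at $0$. One then checks directly from \eqref{sizebias} (or \eqref{size bias RN}) that $Y^* =^d Z$: indeed $\e Y = \e(Z \cdot \frac1Z)/c \cdot$ (careful bookkeeping) $= 1/c$, wait — more cleanly, for bounded measurable $g$, $\e g(Y^*) = \e(Y g(Y))/\e Y$, and substituting the density of $Y$ in terms of that of $Z$ collapses the $1/z$ factors to give $\e g(Z)$. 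So $Y$ is a valid preimage with $\p(Y>0)=1$.

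Then I would address uniqueness of this positive part. Suppose $X$ is any preimage, $X^* =^d Z$. Decompose $X$ according to whether it is $0$: write $b = \p(X = 0)$ and let $\widetilde{Y}$ have the law of $X$ conditioned on $X > 0$, so $\mathcal{L}(X) = b\,\delta_0 + (1-b)\mathcal{L}(\widetilde{Y})$, and necessarily $b < 1$ since $\e X > 0$. Size biasing kills the atom at $0$ (the $h=x$ factor vanishes there) and, by Lemma \ref{lemma mixture} applied to this two-component mixture — or just directly from \eqref{sizebias}, noting $\e(X g(X)) = (1-b)\e(\widetilde Y g(\widetilde Y))$ and $\e X = (1-b)\e \widetilde Y$ — we get $X^* =^d \widetilde{Y}^*$. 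Hence $\widetilde{Y}^* =^d Z =^d Y^*$. Finally, to conclude $\widetilde Y =^d Y$ I would invoke that on strictly positive random variables the size-bias map is injective: if $U, V > 0$ and $U^* =^d V^*$, then for bounded $g$, applying \eqref{sizebias} with $g(x)/x$ (bounded on the support, away from $0$) recovers $\e g(U)/\e U = \e g(V)/\e V$ up to the normalizing means, which are themselves determined as $1/\e(1/U^*)$; equivalently, $\mathcal{L}(U)$ is the $1/x$-bias of $\mathcal{L}(U^*)$, a deterministic functional. So $\widetilde Y =^d Y$, and this $Y$ is the unique positive law.

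The main obstacle is the injectivity-on-positives step: one must be slightly careful that $g(x)/x$ is an admissible test function in \eqref{sizebias} despite being unbounded near $0$ — this is handled because the relevant random variables are supported in $(0,\infty)$, but to be rigorous one either truncates away from $0$ and passes to the limit via Lemma \ref{lemma monotone}, or simply observes that $\mathcal{L}(Y)$ as constructed is forced: any positive preimage must have density $\propto (1/z)$ against $\mathcal{L}(Z)$, which pins it down uniquely. Everything else is bookkeeping with \eqref{size bias RN} and the mixture lemma.
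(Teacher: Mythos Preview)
Your proposal is correct and follows essentially the same approach as the paper: both invert size bias on strictly positive variables via the $1/x$-bias of $Z$ (the Radon--Nikodym derivative $c/y$ against $\mathcal{L}(Z)$), and both handle the general preimage by splitting off the atom at $0$. The paper is slightly more streamlined in that it reads uniqueness directly off the mutual absolute continuity $\nu(dx)/\mu(dx)=x/c$ rather than phrasing it as a separate injectivity-on-positives lemma, but this is cosmetic; also note your appeal to Lemma~\ref{lemma mixture} for the $\delta_0$ component is technically inapplicable (that lemma assumes $m(b)>0$), so your direct computation from \eqref{sizebias} is the right justification there.
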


\begin{proof}
Let $Z =^d X^*$ for some %nonnegative 
$X$;  this implies $X \ge 0$,
$0< \e X < \infty$, and \mbox{$\p(Z>0)=1$}.  Let $b := \p(X=0)$, so clearly $b \in [0,1)$.
Let $Y$ have the distribution of $X$ conditioned
on $X>0$, so $Y>0$, $Z =^dY^*$,
 and $\mathcal{L}(X) = b\delta_0+(1-b)\mathcal{L}(Y)$.  
With $c = \e X / (1-b) = \e Y \in (0,\infty)$, 
we have, as in \eqref{size bias RN}, that the distributions $\nu$ of $Z$ and
$\mu$ of $Y$, as measures on $(0,\infty)$, are mutually absolutely
continuous, with Radon-Nikodym derivative
$$
\frac{\nu(dx)}{\mu(dx)} \equiv \frac{\p(Z \in dx)}{\p(Y \in dx)} = \frac{x}{c}.
$$
This shows the uniqueness of the law for $Y$;  that $\e (1/Z) < \infty$
follows from the explicit calculation
$$   
  \e \frac{1}{Z} = \int_{0<x<\infty} \frac{1}{x} \ \nu(dx) = \int
  \frac{1}{x} \ \frac{d \nu}{d \mu} \ \mu(dx) =  \int \frac{1}{c} \ \mu(dx)=\frac{1}{c}.
$$

Conversely, if  $Z>0$ with probability measure $\nu(dz)$ satisfies 
$0 < \e (1/Z) < \infty$, then  with  $1/c = \e (1/Z)$, the law $\mu$ on
$(0,\infty)$ with $ \mu(dy) /  \nu(dy) = c/y$, as the distribution for
$Y$, yields 
$Z =^d Y^*$. 

\end{proof}

A paraphrase of Lemma \ref{lemma biject} is that size bias is a
bijection, between equivalence classes of distributions for nonnegative 
random variables with strictly positive finite mean, modulo varying
the size of the point mass at zero, and distributions for strictly
positive random variables having finite minus first moment.

\subsection{To bias a process by one coordinate}\label{sect basics one}

The following is taken from \cite{GR96}.   Readers who dislike technicalities might prefer to jump directly to Section \ref{sect bias sum}, which leads up to 
\eqref{this identity}, and then come back only if they feel uncomfortable that our proof of 
\eqref{noniid infinite} doesn't involve any limits!
Suppose that $\BX  %{\bf X} 
= (X_1,X_2,\ldots) \in [0,\infty)^{\mathbb{N}}$ has joint law $\mu$, %and $X_i \ge 0$ with 
and for a particular choice of $i$, 
$a_i := \e X_i \in (0,\infty)$.  To bias by $X_i$ means, analogous to 
\eqref{size bias RN}, to switch to the joint law $\mu^{(i)}$ on $[0,\infty)^{\mathbb{N}}$ with Radon-Nikodym derivative
\begin{equation}\label{RN infinite}
\frac{d \mu^{(i)}}{d \mu}  = \frac{x_i}{a_i}.
\end{equation}
We write
${\bf X}^{(i)} = (X_1^{(i)},X_2^{(i)},\ldots)$ for a process having this joint distribution
$\mu^{(i)}$. 
Equivalent to \eqref{RN infinite} is the following
statement,
\begin{equation}\label{sizebias infinite}
\text{ for all bounded measurable } g, \ \ \e  g({\bf X}^{(i)}) =  \frac{1}{a_i} \ \e (X_i g(\BX) ),
\end{equation}
which looks very much like \eqref{sizebias}, except that now we have $g : [0,\infty)^{\mathbb{N}} \to \BR$.
Note that given a bounded measurable $h:  [0,\infty) \to \BR$,
applying \eqref{sizebias infinite} to the special case $g({\bf x}) := h(x_i)$ shows that our notion of  process bias by one coordinate, restricted to viewing that coordinate, agrees with
the original notion of size bias, i.e., $X_i^{(i)}=^d X_i^*$. 
In general, there is no similarly compact description of what happens to the other coordinates.
However, as we will see in Section \ref{Martingale size bias}, 
if the process ${\bf X}$ is a martingale then
biasing the process by any single coordinate results in size-biasing the marginal distribution of
{\em each} coordinate simultaneously.  

%But in any event, 
%the preceding paragraph applies regardless of whether or not the joint law $\mu$ of $(X_1,X_2,\ldots)$
%involves \emph{dependence}.

In a different direction, suppose that under  $\mu$ the coordinates are initially independent.  Then 
as we now show, after  biasing by the $i^{\rm th}$ coordinate they remain independent, and only the
$i$th coordinate is affected.
%In case the original coordinates were independent, biasing by the $i^{\rm th}$ coordinate preserves the %property of mutual independence.  Some might consider that statement to be so obvious as to not require %a proof;  nevertheless, we give a careful statement and proof, as Lemma \ref{lemma indep}.
\begin{lemma}\label{lemma indep}
Fix a particular value $i$.  Assume that $X_1,X_2,\ldots$ are
mutually independent, nonnegative, and  that $0 < \e X_i < \infty$.  For $j \ne i$ let $Y_j =^d X_j$, %and 
let $Y_i =^d
X_i^*$, and let $Y_1,Y_2,\ldots$ be mutually independent.  
Then the law $\mu^{(i)}$ for $\BXi$, as given by \eqref{RN infinite}, reduces to the law
for ${\bf Y} = (Y_1,Y_2,\ldots)$, i.e.
%Then ${\bf Y} = (Y_1,Y_2,\ldots)$ is distributed
%according to the law $\mu^{(i)}$ for $\BXi$ given by \eqref{RN infinite}, i.e.,
$$
  (X_1^{(i)},X_2^{(i)},\ldots)  =^d (Y_1,Y_2,\ldots).
$$
\end{lemma}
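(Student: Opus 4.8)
The plan is to verify the defining property \eqref{sizebias infinite} for the proposed process $\mathbf{Y} = (Y_1, Y_2, \ldots)$, namely that $\e g(\mathbf{Y}) = \frac{1}{a_i} \e(X_i g(\mathbf{X}))$ for every bounded measurable $g : [0,\infty)^\infty \to \BR$. Since \eqref{sizebias infinite} is equivalent to the Radon--Nikodym characterization \eqref{RN infinite} that defines $\mu^{(i)}$, establishing this identity shows $\mathbf{Y} =^d \mathbf{X}^{(i)}$.

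First I would reduce to functions $g$ depending on only finitely many coordinates. For a bounded measurable $g$ of the form $g(\mathbf{x}) = g_0(x_1, \ldots, x_n)$ with $n \ge i$, both sides of \eqref{sizebias infinite} are computed by Fubini/Tonelli on the product measure. On the left, since the $Y_j$ are independent with $Y_i =^d X_i^*$ and $Y_j =^d X_j$ for $j \ne i$, we get an iterated integral in which the $i$th integration is against the law of $X_i^*$. On the right, $X_i g(\mathbf{X})$ factors as a function on the product space where the $i$th variable carries the extra factor $x_i$; integrating that variable first and using the one-dimensional size bias identity \eqref{sizebias} (in the form $\e(X_i h(X_i)) = a_i \e h(X_i^*)$, valid since $g_0$ is bounded so the relevant section is a bounded function of $x_i$) converts the $i$th integral into one against the law of $X_i^*$, times $a_i$. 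The remaining integrations against the laws of the $X_j =^d Y_j$, $j \ne i$, match term by term, so the two sides agree after dividing by $a_i$.

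Then I would promote this from cylinder functions to all bounded measurable $g$ on $[0,\infty)^\infty$. The collection of bounded measurable $g$ for which \eqref{sizebias infinite} holds is closed under bounded pointwise limits (by dominated convergence, with dominating function the constant bound on $g$ times $X_i$, which is integrable since $\e X_i = a_i < \infty$), contains the constants, and is closed under linear combinations; it contains the indicators of finite-dimensional cylinder sets, which generate the product $\sigma$-algebra and form a $\pi$-system. A functional monotone class (or Dynkin) argument then extends the identity to all bounded measurable $g$, which is exactly the statement that $\mathbf{Y} =^d \mathbf{X}^{(i)}$.

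The only genuine subtlety — more bookkeeping than obstacle — is keeping the Fubini application clean: one must check that the section $x_i \mapsto g_0(x_1,\ldots,x_n)$, for fixed values of the other coordinates, is a bounded measurable function to which \eqref{sizebias} legitimately applies, and that the order of integration may be interchanged, which is justified by boundedness of $g$ together with finiteness of $a_i$. Everything else is routine; the essential content is simply that process-bias by one coordinate acts, on that coordinate, as ordinary size bias (already recorded in the excerpt as $X_i^{(i)} =^d X_i^*$), and that independence lets the other coordinates come along unchanged.
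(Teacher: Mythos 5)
Your proposal is correct and follows essentially the same route as the paper: verify the defining identity \eqref{sizebias infinite} on finite-dimensional cylinder functions using independence and the one-dimensional size bias identity, then extend to all bounded measurable $g$ by a monotone class (equivalently $\pi$--$\lambda$) argument. The only cosmetic differences are that the paper first checks marginals separately and works with indicator functions of cylinder sets rather than general bounded cylinder functions, while you fold the marginal check into the general Fubini computation; the underlying mechanism is identical.
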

\begin{proof}
First we check that the marginals match, i.e., that for each $j$, $X_j^{(i)} =^d Y_j$.  We already noted  that this is so, for $j=i$, as a consequence of \eqref{sizebias infinite}, even without the hypothesis of mutual independence.  For $j \ne i$, and a bounded measurable  $h:  [0,\infty) \to \BR$, applying \eqref{sizebias infinite} to the special case $g({\bf x}) := h(x_j)$ yields the relation  $\e g (\BXi) = \e h(X_j^{(i)})
= (1/a_i) \e (X_i h(X_j))$.  Using the independence of  $X_i$ and $X_j$, we get 
 $ \e h(X_j^{(i)})
= (1/a_i) \e (X_i h(X_j)) = (1/a_i) (\e X_i) \e h(X_j) = \e h(X_j)$, proving that for $j \ne i$, $ X_j^{(i)} =^d X_j$, as required, since for $j \ne i$, $Y_j =^d X_j$.

%Test with cylinder sets, and apply the $\pi - \lambda$ theorem. 
Next we show that $\BXi$ and $\BY$ have the same joint distribution,
either by showing that $\BXi$ has independent coordinates, or by
checking that for all measurable $C \subset [0,\infty)^{\mathbb{N}}$,
$\p(\BXi \in C) = \p (\BY \in C)$,   first by checking
finite-dimensional cylinder sets, then applying the $\pi - \lambda$
theorem  --- either route seems to require the same work.  Without
loss of generality, the cylinder set $C$ includes a restriction on the
$i^{\rm th}$ coordinate, i.e., it has the form $C =  (X_i \in B_i) \cap \bigcap_{j \in J} (X_j \in B_j)$, where $i \notin J$.  Write
$g_1({\bf x}) = 1(x_i \in B_i)$ and $g_2({\bf x}) = 1(x_j \in B_j \text{ for } j \in J)$.  
%With $g=g_1 g_2$ in 
%\eqref{sizebias infinite}, similar to the argument in the first paragraph of this proof, it follows that $\e g(\BXi) = \e g(\BY)$.  
With $g=g_1 g_2$ in 
\eqref{sizebias infinite}, calculation that $\e g(\BXi) = \e g(\BY)$ is a simple extension of the calculation for the special case where the cylinder restricts only one coordinate, %$|C|=1$, 
given in the first paragraph of this proof.
\end{proof}
  
Another technical issue involves the value infinity.   It would have been possible to 
present the basic discussion of size bias, in particular \eqref{size bias RN} and
\eqref{sizebias}, in terms of a random element $Y$ with values in $[0,\infty]$.  But since 
$0 < \e Y < \infty$ implies $\p(Y=\infty)=0$, it is of course
possible, and simpler, to deal with $Y$ taking values in $[0,\infty)$, 
and this is what everyone does.  However, in dealing with infinite
sums of finite nonnegative random variables, 
one cannot simply declare that the space of values for the sum be
taken as $[0,\infty)$, 
even if one knows that the sum is finite with probability one.

Our goal is to deal with the distribution of random variables $Y = h(\BX)$, such as
$Y=X_1+X_2+\cdots$,\footnote{Thanks to only having nonnegative numbers
  for the coordinates of the domain, 
there are no convergence issues in dealing with the sum $X_1+X_2+\cdots \in [0,\infty]$.} and to specify the distribution of $Y^{(i)}$, 
distributed as $Y$ with $\mu$ changed to $\mu^{(i)}$.  Hence we
consider measurable $h: [0,\infty)^{\mathbb{N}} \to [0,\infty]$,  
and  bounded measurable $f: [0,\infty] \to \BR$.  The composition
 $g(\BX) = f(h(\BX))$ is a bounded measurable function from 
$[0,\infty)^{\mathbb{N}} \to \BR$, hence \eqref{sizebias infinite}
applies.  
The distribution of  $Y^{(i)}$ is then specified by
\begin{equation}\label{sizebias functional}
\text{ for bounded measurable } f: [0,\infty] \to \BR,
\ \ \ \e (f(Y^{(i)})) =  \frac{1}{a_i} \ \e (X_i f(Y ) ).
\end{equation}
\ignore{
In particular, when $Y=S=X_1+\cdots+X_n$  or $Y=S= X_1+X_2+\cdots$, we take $h({\bf x})=x_1+\cdots+x_n$ or
$h({\bf x}) = x_1+x_2+\cdots$, as appropriate, and then
\eqref{sizebias functional} specifies the distribution of $S^{(i)}$, 
corresponding to the sum $S$  when the process $\BX$ is biased by
$X_i$.  
Less formally, one writes $S^{(i)}=X_1^{(i)}+\cdots+X_n^{(i)}$ or
$S^{(i)}=X_1^{(i)}+X_2^{(i)}+\cdots$.  
 Thanks to only having nonnegative numbers for the coordinates of the domain, there are no convergence issues in dealing with the sum $X_1+X_2+\cdots \in [0,\infty]$.
}

\subsection{To size bias a sum}\label{sect bias sum}

Consider a finite sum $S=X_1+\cdots+X_n$, $n \ge 1$, or an infinite sum $S=X_1+X_2+\cdots$, 
with  $X_i \ge 0$ and  $a_i := \e X_i >0$, and $a=\e S < \infty$.
After biasing by  $X_i$, as in \eqref{RN infinite}, 
we have a sum\footnote{Warning: our notation
here conflicts with 
some
standard expositions of Stein's method,  such as
%  Chen 1975
\cite{chen},
\cite[Theorem B.1]{BHJ92}, and \cite{grimmett}, where notation $V_i$ refers to the sum,
with $i$th term omitted, size biased by the $i$th term.}
  $S^{(i)}
=X_1^{(i)}+\cdots+X_n^{(i)}$,
so that, as a special case of \eqref{sizebias functional},  for bounded nonnegative measurable $g$,
$$
  \e g(S^{(i)}) = \frac{1}{a_i} \e (X_i g(S)),
$$
and then with \eqref{sizebias} to justify the first line, and elementary algebra (here using $g \ge 0$) to justify the second line,
\bea
  \e g(S^*) &=&  \e (Sg(S))/a\nonumber \\
         &=& \sum_i \frac{1}{a} \ \e (X_i g(S)) \nonumber \\
         &=& \sum_i   \frac{a_i}{a}  \  \e g(S^{(i)}). \label{pre mixture}
\ena

Suppose  furthermore that the summands $X_1,X_2,\ldots$ are independent.  If
size biased random variables $X_1^*,X_2^*,\ldots$ are realized on the
same probability space, with $(X_1,X_1^*),(X_2,X_2^*),\ldots$ mutually\
independent, then for each  $i$, by Lemma \ref{lemma indep}, $S^{(i)} =^d S  -X_i+ X_i^*$ so that 
\eqref{pre mixture}
simplifies  to:
for bounded nonnegative measurable $g$,
\begin{equation}\label{this identity}
   \e g(S^*)= \sum   \frac{a_i}{a}\  \e g(  S -X_i + X_i^*   ).
\end{equation}
The result above says precisely
that $S^*$ can be represented by the mixture of the distributions of
$S + X_i^* -X_i $ with mixture probabilities $a_i/a$. 
With a random $I$ having distribution  defined by 
\begin{equation}\label{wanted by larry}
\p(I=i)=a_i/a, 
\end{equation}
and  all of $I,(X_1,X_1^*),(X_2,X_2^*),\ldots$ mutually 
independent, the mixture formula \eqref{this identity}  can be restated as
\begin{equation}\label{mixture}
   S^* =^d  S -X_I+ X_I^* .
\end{equation}
In the preceding coupling,  for each $i$,  marginal distributions of $X_i,X_i^*$ are specified, but the joint distribution of
$(X_i,X_i^*)$ is otherwise arbitrary.  Allowing such dependence is important  for use with Stein's method; see Section \ref{sect chen}.
Of course,   mutual independence for 
$I,X_1,X_2,\ldots,$ $X_1^*,X_2^*,\ldots$ implies mutual independence for  
$I,(X_1,X_1^*),(X_2,X_2^*),\ldots$  .

For each case, $S=X_1+\cdots+X_n$ or $S=X_1+X_2+\cdots$,  \eqref{mixture} can be written out with notation to emphasize that a single term has been biased\footnote{and hence the title of this paper}:
\begin{equation}
\label{noniid}
(X_1+X_2+\cdots+X_n)^*=^dX_1+ \cdots + X_{I-1}+X_I^*+X_{I+1}+\cdots+X_n,
\end{equation}
and
\begin{equation}
\label{noniid infinite}
(X_1+X_2+\cdots)^*=^dX_1+ \cdots + X_{I-1}+X_I^*+X_{I+1}+\cdots.
\end{equation}
It is a natural abuse of notation 
 to view \eqref{noniid} as a special case of \eqref{noniid infinite}.  The reason that this is abuse, rather than the special case $X_{n+1} = X_{n+2}=\cdots = 0$ is that  the identically zero random variable $X$ cannot be
size biased.  Specifically, $X=0$ doesn't satisfy the conditions of the definition in 
\eqref{size bias RN}, and size biasing this $X$, if allowed,  would abrogate 
Lemma \ref{lemma biject}.  Nonetheless, it is customary to  follow the notational abuse that if $X=0$ then $X^* =^d X = 0$,
so that one can view \eqref{noniid} as the
special case of \eqref{noniid infinite}, and later, write formulas
such as \eqref{poisson sum} for a sum with infinitely many terms,
without writing out a second instance for a sum with finitely many terms.

In contrast to a sum of independent  nonnegative summands, which is size biased by biasing a \emph{single}  term, 
a product $W= X_1 X_2 \cdots X_n$, of  independent, nonnegative random variables
$X_1, \ldots, X_n$,  each with finite, strictly positive mean, is size biased by biasing \emph{every} factor:  taking
$X_1^*,\ldots,X_n^*$ independent, one has 
\begin{equation}\label{bias a product}
W^* \indist X_1^*\cdots X_n^*.
\end{equation}
Here, we leave the proof as an exercise; this result comes from \cite{minluk}.
For the case of dependent summands, the decomposition 
\eqref{pre mixture}  is useful; in contrast, for dependent factors, we
don't know of any useful relation.

An interesting  example of the use of \eqref{noniid infinite} involves 
$S=\sum_{i \ge 1} 2 B_i/3^i$ with  independent $B_i$, 
with 
$\p(B_i=0)=\p(B_i=1)=1/2$.  The cumulative distribution of this sum $S$ is known as the Cantor function;  the distribution of $S$ is, by all reasonable interpretations, the uniform distribution on the Cantor middle thirds set.  By \eqref{wanted by larry}, the random
index $I$ has the geometric distribution $\p(I=i)=2/3^i$ for
$i=1,2,\ldots$, and by \eqref{bernoulli},  the size biased version of
$B_i$ is $B_i^* =  1 = B_i+(1-B_i)$,
so that \eqref{mixture} simplifies to 
$$  
S^* =^d S + 2(1-B_I)/3^I.
$$

A closely related example, using the same $B_i$, is the standard
uniform (0,1) random variable $U  =\sum_{i \ge 1}  B_i/2^i$. With a
random index $J$ having  geometric distribution $\p(J=i)=1/2^i$ for $i=1,2,\ldots$,
independent of $B_1,B_2,\ldots$, \eqref{mixture} simplifies to 
\begin{equation}\label{uniform toy}
U^* =^d U + (1-B_J)/2^J=
\frac{B_1}{2}+\frac{B_2}{4}+\cdots+\frac{B_{J-1}}{2^{J-1}}+\frac{1}{2^J}
+\frac{B_{J+1}}{2^{J+1}}+ \cdots.
\end{equation}
Of course,  it is easy to calculate that the density of $U^*$ is $2x$ on (0,1), using \eqref{sizebias-f}:  multiply the density of the uniform by $x$ and divide by $\e U = 1/2$.   But perhaps the following exercise is not easy.

{\bf Exercise}  Prove, without using size bias, that the sum on the
right side of \eqref{uniform toy} has density $f(x)=2x$ on (0,1).

\begin{figure}[ht]

\includegraphics[scale=.4]{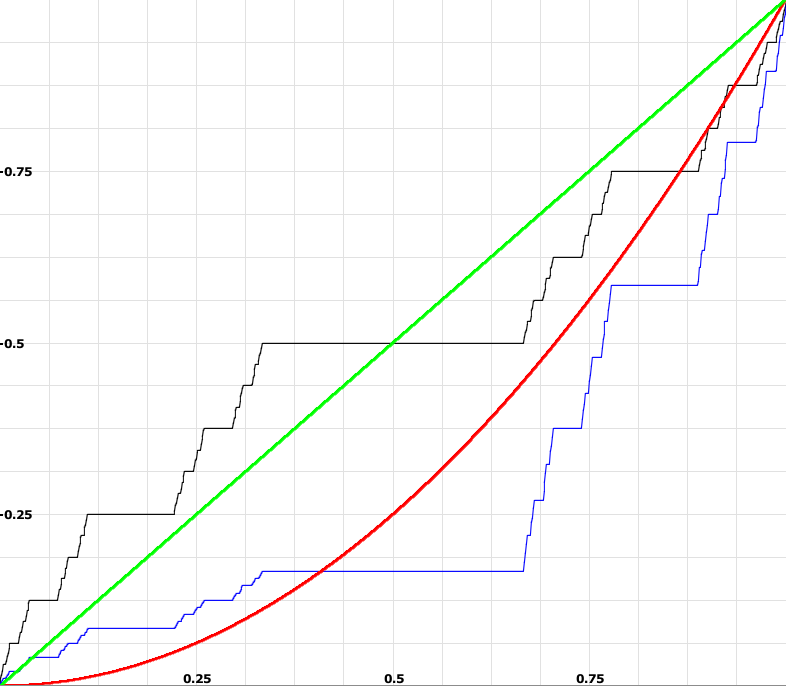} % TEMPORARY DEL
\caption{Cumulative distribution functions for the uniform distribution on (0,1), the uniform distribution on the Cantor set, and the size biased versions of these.  Image produced using MathStudio \cite{studio}.\label{figure cantor}}
\end{figure}

For the case with a finite number of summands, where the summands are not only independent
but also {\em identically distributed}, %   or i.i.d.,  REALLY, r
                                %   thinks we need not define this
                                %   abbreviation --- which gets used
                                %   exactly one other time in this paper
the recipe \eqref{noniid} simplifies.
In this case it does not matter which summand is biased, as all the distributions in the
mixture are the same; hence we may replace the random $I$ with the fixed $i=1$, yielding
%for any $i=1,\ldots,n$, $X^* \indist X_1+\cdots+
%X_{i-1}+X_i^*+X_{i+1}+\cdots+X_n$.  In particular we may use $i=1$ so that
\begin{equation}\label{iid}
(X_1+X_2+\cdots+X_n)^* \indist X_1^*+ X_{2}+X_3+\cdots+X_n.
\end{equation}

Here are some elementary consequences of \eqref{iid}. 
 Recall \eqref{bernoulli}, that for $p \in (0,1]$, a Bernoulli random variable with mean $p$, size biased, is the constant 1. 
Summing $n$ independent copies gives us random
variables $S_n$ whose distribution is Binomial($n,p)$.  Hence using
\eqref{iid},
\begin{equation}\label{binomial n p}
  S_n^* =^d 1+ S_{n-1}.
\end{equation}
Finally, taking $\lambda \in (0,\infty)$ fixed, $Z$ to be
Poisson($\lambda$), and $X_n$ to be Binomial($n,\lambda/n)$, the
Poisson limit for the Binomial, together with Theorem 
\ref{limit theorem} and \eqref{binomial n p}, implies 
$Z^* \indist Z+1$.
Of course, this equality was already verified by direct calculation using
\eqref{sizebias-f} and \eqref{baby poisson},
but the beauty of the argument via \eqref{binomial n p} is that it is
purely conceptual.
%Perhaps less obvious is that \eqref{add one} implies that there exists
%$\lambda \in (0,\infty)$ for which $Z$ is Poisson($\lambda$);  see
%Section \ref{sect chen}.

\subsubsection{Example:  compound Poisson} 
\ignore{
The formula \eqref{add one} for size biasing a single Poisson distributed random
variable, together with the scaling property \eqref{scaling}, combine
nicely with the formula \eqref{noniid infinite} for size biasing a sum of
independent, non identically distributed summands.   
}
Given the distribution for a discrete positive random variable $Y$ with finite mean,  and $0 < a < \infty$, we will show how to construct
a distribution for $S$ such that 
\begin{equation}\label{goal first}
   S^* =^d S+Y \text{ with } S,Y \text{ independent, and } \e S = a.
\end{equation}   
To specify the distribution of $Y$, suppose that $p_i = \p(Y=y_i)$, for distinct constants 
$y_1,y_2,\ldots >0$,  with $p_1+p_2+\cdots =1$.  The requirement  $\e Y
< \infty$ becomes \mbox{$ \sum p_i y_i < \infty$}.  
Define $$\lambda_i = a \,p_i/y_i.$$  Let  $Z_i$ be
Poisson with mean $\lambda_i$ with $Z_1,Z_2,\ldots$ mutually independent.   We will show that
\begin{equation}\label{poisson sum}
S=\sum_{i \ge 1} X_i, \ \ \  \mbox{ with }X_i=y_i Z_i
\end{equation}
gives a solution to \eqref{goal first}, using only formula \eqref{add one} for size biasing a single Poisson distributed random
variable, the scaling property \eqref{scaling}, formula \eqref{noniid infinite} for size biasing a sum of
independent, non identically distributed summands, and the trivial calculation that $a_i := \e X_i = \lambda_i \, y_i = a p_i$, hence $\e S =
\sum \lambda_i \, y_i =  \sum  a p_i  = a$.

First, using \eqref{add one}, $Z_i^* =^d Z_i +1$.  Second, using the scaling property \eqref{scaling}, 
$X_i^*=^d X_i +y_i$.  In the recipe \eqref{noniid infinite}, there is a random index $I$, independent of the $X_1,X_2,\ldots$, with 
\begin{equation}\label{compound dist I}
\p(I=i) =\e X_i / a = \lambda_i y_i /a=p_i,  
\end{equation}
and we can take the coupling in which $X_i^* = X_i + y_i$ for each $i$.  This yields
$S^*=^dS+y_I$, with $S,I$ independent.   Since the $y_i$ are distinct, for each $i$, as events, $(Y_I=y_i) = (I=i)$, hence the distribution of $y_I$ \emph{is} the given distribution for $Y$.    To summarize, we were given the distribution for $Y$, and we constructed a distribution for $S$ so that \eqref{goal first} holds.    We will revisit %this 
the relation $S^* =^d S +Y$ with $S,Y$ independent
in Section \ref{sect inf div}; the preceding is
then seen as an explicit example of \eqref{indep}, with the distribution of $Y$ specified in advance.  In the standard
literature, the random variable $S$ in \eqref{poisson sum} is said to
have a \emph{compound Poisson} distribution, given the further
restriction that $\sum_i  \lambda_i <\infty$. Compound Poisson with
finite mean requires \emph{both}  $\sum_i  \lambda_i <\infty$ and   $\sum
\lambda_i \, y_i <\infty$;
in contrast, we require \emph{only} the latter.% $\sum \lambda_i \, y_i <\infty$. 

Recall, if $Z$ is Poisson($\lambda)$ then its probability generating
function is $G_Z(s) := \e s^Z = \exp(\lambda(s-1))$. Substituting
$s=e^\beta$, the
moment generating
function of $Z$ is $M_Z(\beta) := \e e^{\beta \, Z} =  \exp(\lambda(e^\beta-1))$.
Hence in \eqref{poisson sum}, the moment generating function of $X_i$
is   $M_{X_i}(\beta) = $ \mbox{$ \exp(\lambda_i(e^{\beta \, y_i}-1))$} and the moment
generating function of $S$ is
\begin{equation}\label{comp pois M}
   M_S(\beta) %:= \e e^{\beta \, S}
 = \exp \left( \sum_i
     \lambda_i(e^{\beta \, y_i}-1) \right) =
\exp \left(a \sum_i
     \frac{e^{\beta \, y_i}-1}{y_i} \, \p(I=i) \right),
\end{equation}
with the distribution of $I$ given by \eqref{compound dist I}.  Likewise, the characteristic function of $S$,
$\phi_S(u) := \e e^{i u S}$ is given by
\begin{equation}\label{comp pois phi}
   \phi_S(u) %:= \e e^{\beta \, S}
 = \exp \left( \sum_k
     \lambda_k(e^{i u  \, y_k}-1) \right) =
\exp \left(a \sum_k
     \frac{e^{i u \, y_k}-1}{y_k} \, \p(I=k) \right).
\end{equation}

\section{Waiting time paradox: the renewal theory  connection}\label{sec:renewal}
We resolve the waiting time paradox from Section \ref{sec:waiting} in the general context of renewal processes, at the same time 
providing a conceptual explanation of the identities \eqref{noniid}
and 
(\ref{iid}). % at the same time giving an explanation of the paradox in a more general setting.

Let the interarrival times in Section \ref{sec:waiting} be denoted $X_i$
so that, starting from $0$, arrivals occur at
times $X_1, X_1+X_2, X_1+X_2+X_3,\ldots$,
and assume only that the $X_i$ are
i.i.d., strictly positive random variables with
finite mean; the  paradox presented earlier was for the special case with $X_i$ exponentially 
distributed. 

The following argument is heuristic.  % DAMMIT  --- plausible, but not intended to be a proof.
One way to model the ``arbitrary instant $t$'' is to choose a random 
$T$ uniformly from 0 to $l$, 
independent of $X_1,X_2,\ldots$, and then take
the limit as $l \rightarrow \infty.$ For large but finite $l$, conditional on $X_1,X_2,\ldots$,
apart from possible cutoff at the extreme right\footnote{Conditional
  on $T=t$ and $X_1+\cdots+X_{m-1} < t < X_1 + \cdots + X_{m-1}+X_m$, 
there are $m$ interarrival intervals,  and for $i=1$ to $m-1$ interval
$i$ is selected with probability proportional to $X_i$, 
but interval $m$ is selected with probability proportional to $t-(X_1+\cdots+X_{m-1}) < X_m$.}  the probability of $T$ landing in a given 
interarrival interval is %clearly
 proportional to its length. 
In other words, if the interarrival times $X_i$ have a distribution $dF(x)$, 
the distribution of the length of the selected interval is approximately
proportional to $x \ dF(x)$. In the limit, it is precisely correct that the
distribution of the length of the selected interval is the
distribution of $X^*$. %The preceding is heuristic --- not intended to be a proof.
%;   the rigorous version, that there is a stationary process, in which the distribution of $X^*$ equals the distribution of the length of the interval covering an arbitrary instant $t$,  is proved,  for example, in \cite{thorisson}. 

For the particular
 case of exponentially distributed interarrival times, the density of $X^*$ is $xe^{-x}$, with mean value $2$, and so a 
 right--left 
 symmetry argument
gives the answer in a).

A conceptual explanation of identity (\ref{iid}) is given by the following heuristic. % DAMMIT --- not intended to be a proof.
Group the interarrival intervals into successive blocks of $n$ intervals.
By considering only the endpoints of blocks, i.e., 
%  thinned process induced by ignoring arrivals  internal to the grouping,
the renewal process,  decimated by $n$, %deleting arrivals internal to each block, 
 the random time $T$ must find
itself in a block with total length distributed as $S^* = (X_1+\cdots+X_n)^*$.
But regardless of the grouping, the random time $T$ still finds itself in an internal interval whose length is distributed as the size biased distribution of the interarrival times; the lengths of the other intervals in the same block are not affected. Thus the total block length must also 
be distributed as $X_1+\cdots+X_{i-1}+X_i^*+X_{i+1}+\cdots+X_n$.  
A small extension of this heuristic may convince one of the identity \eqref{noniid}:  given $n$ distributions for strictly positive $X_1,\ldots,X_n$ all with finite mean, create the  $n$-alternating renewal process, in which the independent interarrival time distributions cycle through the $n$ given distributions.   The decimation by $n$  has independent interarrival times distributed as $ S= X_1+\cdots+X_n$, with independent summands, $T$ picks out a block with length distributed as $S^*$, and  the contribution $\e X_i$ makes to the total block size  governs the distribution of which subinterval in a block gets chosen by $T$.    And for \eqref{noniid infinite}, where $S=X_1+X_2+\cdots$ with $\e S < \infty$,  another small extension of the heuristic may be convincing.  But we don't really expect the $\infty$-alternating renewal process to become a popular model.

%Further heuristic argument, that the internal interval $i$ is chosen with probability proportional 
%to the contribution $\e X_i$ makes to the total block size, may convince one of the identity \eqref{noniid}. % DAMMIT, but again, this is not intended to be a proof.

%But by considering the thinned arrival process induced by ignoring arrivals 
%internal to the grouping, the random time $T$ must also find
%itself in a block with total length distributed as $(X_1+\cdots+X_n)^*$. Since only
%the interval containing one of the interarrival times has been size biased, this sum must
%be equal in distribution to $X_1+\cdots+X_{i-1}+X_i^*+X_{i+1}+\cdots+X_n$.

The standard rigorous analysis of the waiting time paradox, for instance in \cite{thorisson}, is a bit less direct, based on randomizing the starting point 
of the arrivals, %and embedding them in 
so that the arrival times form
a stationary sequence.  Begin by extending $X_1,X_2,\ldots$ to an independent, 
identically distributed sequence \mbox{$\ldots,X_{-2},X_{-1},X_0,X_1,X_2,\ldots$ \ .} Informally, 
if the arbitrary instant $t$ could be uniform on the whole line (or by adapting the above 
limiting argument) 
then 
%it would fall uniformly inside a size biased interarrival interval, which might as well be
%$X_0^*$; and the arbitrary instant  might as well be called $0$. 
$t$ would fall uniformly inside a size biased interarrival interval; relabeling, we call $t$ by the name zero, and the landing interval
has length $X_0^*$.
Then the prior arrival and next 
arrival would be at times $-(1-U)X_0^*$ and $UX_0^*$ respectively, where the uniform 
$U \in [0,1]$ is independent of the $X_i$'s. Thus motivated, we define a process by setting 
arrivals at positive times $UX_0^*, UX_0^*+X_1,UX_0^*+X_1+X_2,\ldots$, as well as negative times
$-(1-U)X_0^*,-((1-U)X_0^*+X_{-1}),-((1-U)X_0^*+X_{-1}+X_{-2}),\ldots$\ .
It can be proved that this process is stationary, see
\cite[Theorem 8.1, Chapter 8]{thorisson}. Our desired waiting time 
$W_t$ is then equal in distribution to  $W_0 = UX_0^*$.

The interval which covers the origin has expected length
$\e X_0^* = \e X_0^2 / \e X_0$ (by (\ref{moment shift}) with $n=1$,)
and the ratio of this to $\e X_0$ is $\e X_0^*/\e X_0 = \e X_0^2 / (\e X_0)^2$.
By Cauchy-Schwarz, this ratio is at least 1, (see also \eqref{lies above},) and every value in $[1,\infty]$ is feasible.
%See also \eqref{lies above}.
Since the mean waiting time is $\e W_t = \e W_0 = \e (U X_0^*) = (1/2) \e X_0^*$,
the ratio  $\e W_t/ \e X_0$ can be any value
between 1/2 and infinity, depending on the distribution of $X_0$.

The exponential case is very special, where %strange and wonderful 
``coincidences''
effectively hide all the structure involved in size biasing.
As suggested by Feller's argument  (a) at the start of this paper, but now
justified by stationarity, 
%in Section \ref{sec:waiting},
 $\e W_t = 1$.
Furthermore, for the exponential case, where $X_0$ has density $e^{-x}$ for $x>0$, one gets $X_0^*$ has density $x e^{-x}$ and  the two summands $UX_0^*$ and
$(1-U)X_0^*$ are independent, 
each with the original exponential 
distribution.\footnote{Exercise for the reader: prove that if $U \, X^* =^d X$ when $U$ is independent of $X^*$ and $U$ is distributed uniformly on $(0,1)$, then $X$ has an exponential distribution  --- on some scale. Not hard;  or, see \cite{pakes}.} Thus the general recipe for cooking up a stationary
process, involving $X_0^*$ and $U$ in general, simplifies beyond recognition:
the original simple process with arrivals at times $X_1, X_1+X_2,
X_1+X_2+X_3,\ldots$ forms half of a stationary process,
which is completed by its other half, arrivals at $-X_1', -(X_1'+X_2'), \ldots, $
with $X_1,X_2,\ldots,X_1',X_2',\ldots$ all independent and
exponentially distributed.

The above material deals with \emph{renewal} processes, and perhaps
originated in Doob  \cite{Doob}.   A broad generalization, applying to
\emph{stationary} point processes --- dropping the requirement that
the interarrival times be independent --- was given by
\cite{Kaplan}.   See also \cite[p. 299]{Daley}.

\section{Size bias in statistics}

%While the topic of inadvertent or unavoidable size bias\footnote{or length bias, as it is sometimes
%called in sampling literature} in statistical sampling is not part
%of our brief, we will, at least, cite two references from a vast literature. 

We now touch briefly on the topic of inadvertent or unavoidable size bias\footnote{or length bias, as it is sometimes
called in sampling literature} in statistical sampling by citing two references from a vast literature.\footnote{
An unpublished survey by Termeh Shafie on Length-Biased Sampling,
    found on her ETH webpage, contains a quite useful bibliography.}
%An unpublished survey, by Termeh Shafie of the University of Ume\aa,  contains a useful bibliography.}  
We also  discuss the \emph{deliberate} use of size bias, as a sampling tool.

%{\bf Note   to authors; see .tex source}
%(OR MAYBE SHOULDN'T EVEN BOTHER -- JUST SAY EARLIER IT'S A BIG SUBJECT OUTSIDE OF OUR SCOPE.OR
%MAYBE AFTER ONE SENTENCE ACKNOWLEDGING EXISTENCE OF A BIG LITERATURE, JUST GIVE MIDZUNI EXAMPLE 
%IF FOCUS OF PAPER IS REALLY TO BE JUST CONSEQUENCES OF THE BIASED SUM FORMULA.) 

%Fortunately there is an
%survey paper \cite{Survey} which readers can consult for more information and references. Here are
%a few arbitrarily chosen examples.
\subsection{Inadvertent size bias}
In a % seminal
 1969 paper \cite{COX} David Cox identifies, among other topics, length bias in a 
then-standard process
for estimating the mean length of textile fibers:  In outline, as he describes it, 
fibers are %an assembly of fibres is
 gripped 
by a pincer, all ungripped fibers adhering to the gripped ones are carefully removed, and the 
remaining fibers are measured. Cox points out that since shorter
fibers are more likely to be missed by the pincer, the distribution of the sampled lengths is
length biased. He proposes some adapted estimators for getting at parameters of the original 
distribution if the sampling process itself cannot be refined.

Nearer to the present, the 2009 paper \cite{KP} considers issues
arising in assessing the value of medical screening and the 
effects of subsequent early treatment on survival time. As discussed
in \cite{KP}, for reasons
analogous to waiting-time bias, the durations 
of preclinical disease states detected by certain screening protocols are subject to length bias.
Even though the durations themselves are not observed, longer durations are likely to derive from 
slower-acting instances of the disease under consideration, and hence are correlated a priori with 
longer survival times. Therefore, as indicated by the authors, 
improvement in survival time is likely to be overestimated by 
such studies  if suitable adjustments are not made.

\subsection{Deliberate size bias to create something unbiased}
%\ignore{
Somewhat paradoxically, size biasing can occasionally be used to construct
{\em unbiased} estimators of quantities that would seem, at first glance, difficult to
estimate without bias. The following procedure for unbiased ratio estimation is due to Midzuno 
\cite{midzuno}; see also Cochran  \cite{C77}. Suppose that for each individual $i$ in some large 
population there is a pair of numbers $(x_i,y_i)$, with 
the value $x_i$ easy to obtain but $y_i$ more difficult.  Assume each
$x_i \ge 0$, with not all zero.  Suppose that it is desired to
estimate the ratio $\sum_i y_i / \sum_i x_i$ without bias and without
sampling  the entire population.
Perhaps $x_i$
is how much the $i^{th}$
customer was billed by their utility company last month, and $y_i$, say a smaller
value than $x_i$, the amount they were supposed to have been billed.
Suppose we would like to know just how severe the overbilling error
is; that is, we would like
to know the
`adjustment factor',  the ratio $\sum_i y_i / \sum_i x_i$.
Even though $ \sum_i x_i$ is known, collecting
the paired values for everyone is laborious and expensive, so we would like to be able to
use a sample of $m<n$ pairs to make an estimate. 
It is not hard to verify that, %for $m<n$, with 
if we select a set $R$ of $m$ indices, with all ${n \choose m}$ sets
equally likely, then 
the estimate $\sum_{j \in R}y_j/\sum_{j \in R}x_j$ will be biased.

 The following device gets around this difficulty. Draw a random set $R$ of size $m$ by first 
selecting  $i$ with  size-biased probability $x_i/\sum_j x_j$. Then draw $m-1$ indices uniformly from the 
remaining $n-1$. Though we are out of the independent framework, the principle of (\ref{iid}) is 
still at work: size biasing one element has size biased the sum. 
This is so because we have size biased the
one, and then chosen the others from  the %an 
appropriate conditional distribution. Thus, we have  
selected a set $r$ of indices with probability proportional to $\sum_{j \in r}x_j$. 
>From this observation it follows that
$\e(\sum_{j \in R}y_j/\sum_{j \in R }x_j)=\sum_j y_j / \sum_j x_j$. 

Here is Midzuno's procedure in a bit more detail.  % given the pairs $(x_1,y_1),\ldots,(x_n,y_n)$ with $x_j
%\ge 0$ for all $j$ and $\sum_j x_j>0$, it is desired to unbiasedly
%estimate the ratio $\overline{y}/\overline{x}$ where 
Let
$$
 \overline{x}=\frac{1}{n}\sum_{j=1}^n x_j \text{ and } \overline{y}=\frac{1}{n}\sum_{j=1}^n y_j .
$$
First choose index  $I$ with distribution
$$
P(I=i)=\frac{x_i}{\sum_{j=1}^n x_j}.
$$
Then from the remaining set $\{1,\ldots,n\} \setminus \{I\}$,  take a simple
random sample $ S$ of size $m-1$. Let $R=S \cup \{I\}$ be the resulting set of size
$m$. We claim the estimator $T_{R}$ is unbiased for $\overline{y}/\overline{x}$, 
where, 
for $r$ a subset of $\{1,\ldots,n\}$,
$$
T_r=\frac{\overline{y}_r}{\overline{x}_r} \quad \mbox{with} \quad
\overline{y}_r=\frac{1}{m}\sum_{j \in r} y_j \quad \mbox{and}
\quad \overline{x}_r=\frac{1}{m}\sum_{j \in r} x_j.
$$
To see why, consider that $R$ may equal $r$, any set of size $m$,
in $m$ different possible ways, one each according to
first selecting some element $i \in r$ with probability $P(I=i)$, and
then collecting the remaining elements in the simple random sample. 
Hence, \begin{eqnarray*}
P( R=r)&=&\sum_{i \in r}P(I=i)P( S\setminus \{i\}=r\setminus \{i\}) \\
&=& \sum_{i \in r} \frac{x_i}{\sum_{j=1}^n x_j} \frac{1}{{n-1
\choose m-1}}\\
&=& \frac{1}{{n-1 \choose m-1}} \frac{\sum_{i \in r} x_i}{\sum_{j=1}^n x_j}\\
&=& \frac{1}{\frac{n}{m}{n-1 \choose m-1}} \frac{\overline{x}_r}{\overline{x}}\\
&=& {n \choose m}^{-1} \frac{\overline{x}_r}{\overline{x}}. \end{eqnarray*}
Next, applying the easily shown identity
\begin{eqnarray*}
{n \choose m}^{-1} \sum_{|r|=m} \overline{y}_r =\overline{y},
\end{eqnarray*}
we obtain
\begin{eqnarray*}
ET_{R}&=&\sum_{|r|=m} \frac{\overline{y}_r}{\overline{x}_r}P( R=r) = {n \choose m}^{-1} \sum_{|r|=m} \frac{\overline{y}_r}{\overline{x}_r}\frac{\overline{x}_r}{\overline{x}}\\
&=& \frac{1}{\overline{x}}{n \choose m}^{-1} \sum_{|r|=m}
\overline{y}_r\\
&=& \frac{\overline{y}}{\overline{x}}.
\end{eqnarray*}
%This method
%is known as Midzuno's procedure for unbiased ratio estimation. %and is noted in Cochran  \cite{C77}.
For the variance of the estimator, see \cite{rao}.

\section{Relation to Stein's method and concentration inequalities} \label{sect chen}

Implicit in Chen 1975 \cite{chen},  with improved constants due to
\cite{BHJ92}, see also \cite[Theorem 4.12.12]{grimmett}, is the following result from \cite{gr06},
Theorem 1.1, see also \cite[Theorem 4.10]{ross1},
which we paraphrase\footnote{The theorem in \cite{gr06} is
stated with the condition that $X$ be a finite sum of indicator random
variables.  However, an arbitrary nonnegative integer valued $X$ is a
sum of indicators, namely $X = \sum_{i \ge 1} 1(X \ge i)$, and the
restriction on finite sum can be removed using 
Theorem \ref{limit theorem} applied to $X_n := X \wedge n =  \sum_{i = 1}^n 1(X \ge i)$.}
here as
\begin{theorem}\label{chen theorem}
Let $X$ be a nonnegative integer valued random variable with $\lambda
:= \e X \in (0,\infty)$;  let $Z$ be Poisson with parameter
$\lambda$. Then for any coupling of $X$ with $X^*$, the total variation distance between
the distributions of $X$ and $Z$
satisfies
$$
   d_{\rm TV}(X,Z) \le (1-e^{-\lambda}) \ \e | X^* - (X+1)|.
$$
\end{theorem}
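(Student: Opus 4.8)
The plan is to run Stein's method for Poisson approximation, using the size bias identity \eqref{sizebias} as the bridge between the $X$--side and the $Z$--side of the estimate. Recall from Proposition \ref{proposition baby} that ``$Z$ is Poisson($\lambda$)'' is exactly the statement $Z^* =^d Z+1$; Stein's method converts this exact characterization into a quantitative one. Concretely, for each subset $A \subseteq \{0,1,2,\ldots\}$ I would introduce the Poisson Stein equation
$$
\lambda f(k+1) - k f(k) = \bbbone_A(k) - \bbbp(Z \in A), \qquad k \ge 0,
$$
whose unique bounded solution is
$$
f_A(k+1) = \frac{k!}{\lambda^{k+1}} \sum_{i=0}^{k} \bigl( \bbbone_A(i) - \bbbp(Z \in A) \bigr) \frac{\lambda^i}{i!},
$$
and which satisfies the classical ``Stein factor'' bound $\sup_k |f_A(k+1) - f_A(k)| \le (1-e^{-\lambda})/\lambda$. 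Evaluating the Stein equation at $X$ and taking expectations (legitimate since $f_A$ is bounded) gives
$$
\bbbp(X \in A) - \bbbp(Z \in A) = \e\bigl[ \lambda f_A(X+1) - X f_A(X) \bigr].
$$

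Now the size bias step. Since $X$ is nonnegative integer valued with $\e X = \lambda \in (0,\infty)$ and $f_A$ is bounded, \eqref{sizebias} applied with $g = f_A$ gives $\e[X f_A(X)] = \lambda\, \e f_A(X^*)$, and by \eqref{sizebias-f} the variable $X^*$ is supported on the positive integers. Hence
$$
\bbbp(X \in A) - \bbbp(Z \in A) = \lambda\, \e\bigl[ f_A(X+1) - f_A(X^*) \bigr].
$$
Because $X+1$ and $X^*$ take only integer values, the telescoping inequality $|f_A(m) - f_A(n)| \le \bigl( \sup_k |f_A(k+1) - f_A(k)| \bigr)\, |m-n|$ for integers $m,n$ shows, for the given coupling of $X$ and $X^*$,
$$
\bigl| \bbbp(X \in A) - \bbbp(Z \in A) \bigr| \le \lambda \cdot \frac{1 - e^{-\lambda}}{\lambda}\, \e|X^* - (X+1)| = (1 - e^{-\lambda})\, \e|X^* - (X+1)|.
$$
Taking the supremum over $A$, and using $d_{\rm TV}(X,Z) = \sup_A |\bbbp(X \in A) - \bbbp(Z \in A)|$, yields the claim.

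The only genuinely nontrivial ingredient is the Stein factor bound $\sup_k |f_A(k+1) - f_A(k)| \le (1-e^{-\lambda})/\lambda$; everything else is bookkeeping plus the one painless substitution $\e[X f(X)] = \lambda\, \e f(X^*)$ coming from \eqref{sizebias}. That bound is a purely analytic estimate on the solution of the difference equation displayed above, with no size-bias content at all, and I would either cite it (Chen \cite{chen}, with the improved constant as in \cite{BHJ92} or \cite[Theorem 4.12.12]{grimmett}) or reprove it by the standard route, reducing to singletons $A=\{m\}$, where $f_{\{m\}}$ is monotone on $\{0,\ldots,m\}$ and on $\{m+1,m+2,\ldots\}$ with a single sign change, bounding the two extreme increments explicitly, and then noting that the same uniform bound persists for general $A$. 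I expect this Stein-factor estimate to be the main obstacle to a self-contained write-up; the size-bias content of the theorem is precisely that the Chen--Stein machinery and the size bias transform fit together with no friction, which is why $\e|X^*-(X+1)|$ is the natural quantity to put on the right-hand side.
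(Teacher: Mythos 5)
Your argument is correct, and it is the standard Chen--Stein proof of this bound. For comparison: the paper does not actually prove Theorem \ref{chen theorem} at all --- it paraphrases the result from \cite{gr06} (Theorem 1.1 there), noting it is implicit in \cite{chen} with the constant from \cite{BHJ92}, and only supplies a footnote explaining how to pass from finite sums of indicators to general nonnegative integer-valued $X$ via Theorem \ref{limit theorem}. So your write-up fills in exactly the argument the paper delegates to the references: the Stein equation $\lambda f(k+1)-kf(k)=\bbbone_A(k)-\p(Z\in A)$, the substitution $\e[Xf_A(X)]=\lambda\,\e f_A(X^*)$ from \eqref{sizebias}, the telescoping bound $|f_A(m)-f_A(n)|\le \|\Delta f_A\|_\infty\,|m-n|$ on integers (valid since $X+1$ and $X^*$ are both positive integers), and the Stein factor $\|\Delta f_A\|_\infty\le(1-e^{-\lambda})/\lambda$. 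You are right that this last estimate is the only genuinely nontrivial analytic ingredient and that it carries no size-bias content; citing it (it is due to Barbour and Eagleson, and appears in \cite{BHJ92}) is the appropriate move, and your sketch of how one would reprove it (reduce to singletons, exploit the unimodality of $f_{\{m\}}$) is the correct route. One cosmetic remark: the paper's definition of $d_{\rm TV}$ is the one-sided supremum $\sup_B(\p(X\in B)-\p(Z\in B))$, which coincides with your two-sided version by passing to complements, so nothing is lost.
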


The total variation distance appearing in Theorem \ref{chen theorem}
is defined, for random variables $X,Y$ in general, by $   d_{\rm TV}(X,Y)
= \sup_B (\p(X \in B) - \p(Y \in B) )$, with the supremum taken over
all Borel sets.

Size biasing also has a connection with Stein's method for obtaining error bounds when
approximating %the 
distributions %of sums 
by the normal distribution, see 
\cite{BR89,BRS89,goldstein-book,GR96}.

Size bias also plays a role in concentration inequalities, see \cite{ghosh,ghosh2,arratiagamma,BGI}. The results from \cite{ghosh,arratiagamma} include:
if $X \ge 0$ with $a :=\e X \in (0,\infty)$ can be coupled to $X^*$ so that $\p(X^* \le X+c)=1$,
%\footnote{and hence, can be coupled with $X \le X^* \le X+C$, see \cite{abgi}$} then 
then
\begin{eqnarray*}
\text{for }0<x \le a,  \p(X \le x) & \le &(a/x)^{x/c} e^{(x-a)/c} \le \exp(-(a-x)^2/(2ca)), \\   
\text{for }x \ge a, \p(X \ge x) & \le &(a/x)^{x/c} e^{(x-a)/c} \le  \exp(-(x-a)^2/(c(a+x))).
\end{eqnarray*}

To see how size bias enters, if a coupling satisfies  $\p(X^* \le
X+c)=1$, then for all $x$, the event $X^* \ge x$ is a subset of the event $X \ge x-c$.
 Hence for $x>0$,
\begin{eqnarray*}
x \p(X \ge x)  = x \, \e 1(X \ge x) & \le & \e(X 1(X \ge x)) \\
 &= & a \, \p(X^* \ge x)
 \\
 &\le& a \,\p(X \ge x-c),
\end{eqnarray*}
and dividing by $x$ we get
\begin{equation}\label{upper one step}
\forall x>0, \ \      G(x) \le \frac{a}{x} \, G(x-c),
\end{equation}
Iterating \eqref{upper one step} leads to the sharp upper bounds on $\p(X \ge x)$, for each
$x \ge a$.  An extension to exploit the weaker condition
$\p(X^* \le  X+c | X^*) \ge p \in (0,1)$  is discussed in \cite{cook}.

In the context of sums of independent random variables each with a
bounded range,  the concentration bounds based on bounded size bias
couplings are stronger than the corresponding Chernoff--Hoeffding
bounds, as well as being broader in scope;  see \cite{arratiagamma}. 
 Applications of these bounds to
situations involving dependence, such as  the number of relatively ordered
subsequences of a random permutation, 
sliding window statistics including the number of $m$-runs in a sequence
of coin tosses, 
the number of local maxima of a random function on a lattice, the
number of urns containing 
exactly one ball in an urn allocation model, 
and the volume covered by the union of $n$ balls placed uniformly over
a volume $n$ subset of $\BR^d$, are discussed in \cite{ghosh2}.
An example showing that the size bias concentration bounds  supply a
desired uniform integrability, in a situation where the usual
Azuma-Hoeffding  bounded martingale difference inequality is not
adequate, is given in \cite{agk}.

%Also not directly linked to Stein's method or concentration inequalities, but nevertheless worth
%mentioning: a beautiful treatment of
%size biasing for branching processes is \cite{LPP} by Lyons, Pemantle,
%and Peres.

\section{Size bias and Palm distributions}

%  MORE:  see \cite{Kaplan} Kaplan 1955, and \cite[p. 299]{Daley} Daley and Vere Jones book.

The size bias view of arrival times and stationarity, discussed in Section
\ref{sec:renewal}, is sometimes
expressed in the language of Palm measures for stationary point processes;
see \cite[Chapter 8]{thorisson} or \cite[p. 299]{Daley} for details.  At this level,
Palm measures are derived from \emph{simple} point processes, that is, random
nonnegative integer valued measures $\xi$ for which any singleton set
$\{s\}$ has
measure zero or one,  and the Palm measure $\xi_s$ corresponds to conditioning on
having an arrival at the point $s$.

There is a more general version of Palm measure, which  applies to
nonnegative random measures; we attribute this to
Jagers and Kallenberg, \cite{jagers,kallenberg73,kallenberg75}.  This version
is, quite directly, a generalization of biasing 
a process $\BX  %{\bf X} 
= (X_1,X_2,\ldots) \in [0,\infty)^{\mathbb{N}}$ in the direction of its
$i$th coordinate, to get ${\bf X}^{(i)}$, described in Section
\ref{sect basics one}.
The setup is:   $S$ is a complete separable metric
space and  $M$ is the set of nonnegative sigma-finite measures on $S$;
typical examples include $S=\mathbb{R}$ and $S = \mathbb{R}^d$.
Fix a random measure $\xi$, that is, a random element of $M$.
The characterizing property of the Palm measures $\xi_s$, for $s \in
S$,
 is that, for bounded
measurable functions $g\! : M \to \mathbb{R}$, 
\begin{equation}\label{palm bias}
  \e g( \xi_s) = \frac{\e (\xi(ds) g(\xi) )}{\e \xi(ds)}.
\end{equation}

In the restrictive case $S=\mathbb{N}$, a measure $\zeta \in M$
corresponds naturally to the sequence $(z_1,z_2,\ldots) \in
[0,\infty)^{\mathbb{N}}$ with $z_i = \zeta(\{i\})$, the mass assigned by the
measure to the location $i$ in the underlying space $S$,  hence a
random measure $\xi$ corresponds to a stochastic process $\BX=(X_1,X_2,\ldots)$
with values in $[0,\infty)^{\mathbb{N}}$.  In this restrictive case and under
this correspondence, with $s
= i$,  $\xi_s = \BX^{(i)}$ is the process $\BX$ biased by its $i$th
coordinate $X_i$, and $\e \xi(ds) = \e X_i =: a_i$, and 
\eqref{palm bias} looks identical to \eqref{sizebias infinite}  ---
the only difference is that in  the setup for \eqref{sizebias infinite}
we needed to \emph{assume} that for each $i$, $\e X_i >0$  --- in
particular, one cannot size bias the random variable $X$ which is
identically zero. But in the measure context, it would be an
unreasonable extra assumption, to require that the intensity measure
$\e \xi$ be purely atomic.

The above has fully described a sense in which Palm measures are a
generalization of simple size bias.
%, and the reader may stop here.  We will proceed further, giving
As an application, we provide a solution, in the same spirit, 
%\emph{our} solution 
to (part of) Exercise 11.1 in  \cite{kallenberg75}.  The 
exercise asks for a proof of the following theorem, in which the
emphasis 
 is that $\xi$ is
\emph{not}
assumed to be  \emph{integer-valued}.

\begin{theorem}\label{thm palm poisson}
Suppose  $\xi$ is a random measure on $S$.  For $s \in S$ write 
$\delta_s$ for deterministic measure ``unit mass at $s$''.  Suppose
the
 Palm measures satisfy:  for  $s \in S$, $\xi_s=\xi + \delta_s$.  Then
$\xi$ is a Poisson process.  
\end{theorem}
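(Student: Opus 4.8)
The plan is to characterize a Poisson process via its Laplace functional, and to extract a functional equation for the Laplace functional from the hypothesis $\xi_s = \xi + \delta_s$ using the defining property \eqref{palm bias} of Palm measures. Recall that for a random measure $\xi$ on $S$ and a nonnegative measurable $f\colon S \to [0,\infty)$, the Laplace functional is $L(f) := \e \exp(-\int f\, d\xi)$, and that $\xi$ is a Poisson process with intensity measure $\Lambda$ if and only if $L(f) = \exp\!\big(-\int (1 - e^{-f(s)})\, \Lambda(ds)\big)$. So the goal is two-fold: first show $\e\xi$ is a sigma-finite (deterministic) measure, call it $\Lambda$; then show $L(f)$ has the stated exponential form with this $\Lambda$.

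First I would fix a nonnegative test function $f$ and consider the map $s \mapsto L_s(f) := \e\exp(-\int f\, d\xi_s)$. Since $\xi_s = \xi + \delta_s$, we have $\exp(-\int f\, d\xi_s) = e^{-f(s)} \exp(-\int f\, d\xi)$, so $L_s(f) = e^{-f(s)} L(f)$. On the other hand, apply \eqref{palm bias} with $g(\zeta) = \exp(-\int f\, d\zeta)$: integrating $\e\xi(ds) g(\xi_s) = \e(\xi(ds) g(\xi))$ over $s \in S$ against $f$ — more precisely, multiply by $f(s)$ and integrate — gives on the right $\e\big(\int f(s)\,\xi(ds)\cdot e^{-\int f\, d\xi}\big) = -\frac{d}{dt}\big|_{t=1} \e e^{-t\int f\, d\xi}$, i.e. a derivative of $L$ along the ray $tf$. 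Writing $\Lambda(ds) := \e\xi(ds)$ (sigma-finiteness of $\e\xi$ must be established; see below), the left side becomes $\int f(s)\, L_s(f)\, \Lambda(ds) = L(f) \int f(s) e^{-f(s)}\, \Lambda(ds)$. Setting $\psi(t) := L(tf) = \e e^{-t\int f\, d\xi}$, this identity reads $-\psi'(t) = \psi(t) \int f(s) e^{-t f(s)}\, \Lambda(ds)$, a first-order linear ODE in $t$ with $\psi(0) = 1$. Its unique solution is $\psi(t) = \exp\!\big(-\int (1 - e^{-t f(s)})\, \Lambda(ds)\big)$, and evaluating at $t=1$ gives exactly the Poisson Laplace functional. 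Since $f$ was arbitrary, $\xi$ is Poisson with intensity $\Lambda = \e\xi$.

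The main obstacle is the technical underpinning: I would need to justify (i) that $\e\xi$ is sigma-finite, hence a legitimate intensity measure — this should follow from $\xi$ being a random measure in $M$ together with the Palm relation being well-posed, but requires care about where $\e\xi(ds) = \infty$; (ii) the interchange of expectation and the $s$-integration (Fubini), and the differentiation under the expectation in $t$ (dominated convergence — here nonnegativity of $f$ and boundedness of $e^{-tf}$ and of $g$ help, and one can first restrict to $f$ supported on a set of finite $\Lambda$-measure and bounded, then extend by monotone limits); and (iii) reducing the general integral-against-$f$ version of \eqref{palm bias} to the stated "for $\e\xi$-a.e.\ $s$" version, which is the standard disintegration step. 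A secondary point is that the functional equation $L_s(f) = e^{-f(s)} L(f)$ already forces $L(f) > 0$ for all bounded $f$ of compact support (otherwise $\xi$ would be infinite on a compact set with positive probability, contradicting $\xi \in M$), so $\psi$ never vanishes and the ODE manipulation (dividing by $\psi$) is legitimate. Once these measure-theoretic points are in place, the computation is the short ODE argument above. I would also remark that restricting to $S = \mathbb{N}$ recovers a slicker version of the earlier observation that $X^* =^d X+1$ characterizes the Poisson distribution, now read coordinatewise, tying Theorem \ref{thm palm poisson} back to Proposition \ref{proposition baby}.
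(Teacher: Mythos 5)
Your proof is correct in outline (modulo the technical points you flag), but it takes a genuinely different route from the paper's. The paper stays close to its size-bias theme: its Lemma \ref{lemma new1} converts the Palm hypothesis $\xi_s = \xi + \delta_s$ into the one-dimensional size-bias statement $\xi(B)^* =^d \xi(B)+1$ for any $B$ with $\e\xi(B) \in (0,\infty)$, then invokes Corollary \ref{cor poisson} (itself a consequence of the infinite-divisibility Theorem \ref{thm inf div and Levy}) to conclude that each $\xi(B)$ is Poisson. Independence of $\xi(B_1),\xi(B_2),\ldots$ for disjoint $B_i$ is then handled separately via Lemma \ref{lemma new2}, an elementary recursion-on-masses argument. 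Your approach bypasses this two-stage marginal-then-independence structure entirely by working with the Laplace functional $L(f)=\e\exp(-\int f\,d\xi)$: multiplying the Palm identity \eqref{palm bias} by $tf(s)$, integrating over $s$, and using $\xi_s=\xi+\delta_s$ to evaluate the left side, you obtain the ODE $-\psi'(t)=\psi(t)\int f\,e^{-tf}\,d\Lambda$ for $\psi(t)=L(tf)$, whose solution is exactly the Poisson Laplace functional. What each approach buys: yours is shorter and gets marginals and independence in one shot (the Laplace functional determines the law of the random measure), but it leans on the Laplace-functional characterization of Poisson processes as a black box and needs the Fubini/dominated-convergence/log-of-nonvanishing bookkeeping you list; the paper's is more elementary, entirely self-contained (reusing $X^* =^d X+1 \Rightarrow X$ Poisson, which the paper itself proves), and keeps the size-bias point of view front and center, at the cost of a separate independence lemma whose verification the authors only sketch. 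Your closing remark, tying the $S=\mathbb{N}$ case back to Proposition \ref{proposition baby}, is exactly the connection the paper draws in the discussion immediately preceding the theorem, where the restrictive case $S=\mathbb{N}$ is shown to reduce \eqref{palm bias} to \eqref{sizebias infinite}.
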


\begin{lemma}\label{lemma new1}
Under the hypotheses of Theorem \ref{thm palm poisson}, for any
measurable  $B \subset S$ for which $\e \xi(B) \in (0,\infty)$, the
random variable $X = \xi(B)$ satisfies  $X^* =^d X+1$.
\end{lemma}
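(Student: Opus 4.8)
The plan is to derive the stated distributional identity $X^* =^d X+1$ directly from the Palm characterization \eqref{palm bias} together with the hypothesis $\xi_s = \xi + \delta_s$, by integrating the latter over the set $B$. First I would fix a bounded measurable $h : [0,\infty) \to \BR$ and, recalling that $X = \xi(B)$, apply the size bias characterization \eqref{sizebias} in the form $\e h(X^*) = (1/a)\,\e(X h(X))$ where $a = \e X = \e\xi(B)$. The task is then to rewrite $\e(\xi(B) h(\xi(B)))$ using the Palm measures. The key identity is the ``refined Campbell'' type formula: for suitable $g : M \to \BR$,
\[
  \e\!\left( \int_B g(\xi)\,\xi(ds) \right) = \int_B \e g(\xi_s)\,\e\xi(ds),
\]
which is exactly \eqref{palm bias} integrated over $B$ (and is the defining relation for Palm measures, available from the setup preceding Theorem \ref{thm palm poisson}). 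Taking $g(\eta) = h(\eta(B))$ gives $\e(\xi(B) h(\xi(B))) = \int_B \e\, h(\xi_s(B))\,\e\xi(ds)$.

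Next I would substitute the hypothesis $\xi_s = \xi + \delta_s$. For $s \in B$ we have $\delta_s(B) = 1$, hence $\xi_s(B) = \xi(B) + 1 = X + 1$, so $\e\, h(\xi_s(B)) = \e\, h(X+1)$ is a constant independent of $s \in B$. Therefore
\[
  \e(X h(X)) = \int_B \e\, h(X+1)\,\e\xi(ds) = \e\, h(X+1)\cdot \e\xi(B) = a\,\e\, h(X+1).
\]
Dividing by $a$ yields $\e\, h(X^*) = \e\, h(X+1)$ for all bounded measurable $h$, which is precisely $X^* =^d X+1$.

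\textbf{Main obstacle.} The only delicate point is justifying the integrated Campbell--Palm formula and checking its hypotheses apply here: one needs $\xi(B)$ to be a genuine nonnegative random variable (finite a.s., which follows from $\e\xi(B) < \infty$), and one needs the map $s \mapsto \e g(\xi_s)$ to be measurable and the integral against $\e\xi$ to make sense --- all of which are part of the standard Palm calculus on a complete separable metric space $S$, as set up in \cite{kallenberg75} and the paragraph preceding Theorem \ref{thm palm poisson}. A secondary, purely bookkeeping point is the extension from bounded continuous $h$ (where \eqref{sizebias continuous} is immediate) to bounded measurable $h$, which is covered by \eqref{sizebias}; alternatively one can simply take $h = 1_{\{k\}}$ for each nonnegative integer $k$ to read off the point masses, noting that $X$ is automatically integer-valued here only if $\xi$ is --- but the argument above never needs that, and indeed the identity $X^* =^d X+1$ is stated (and proved) without assuming $\xi$ integer-valued, exactly as the theorem emphasizes.
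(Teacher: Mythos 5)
Your argument is correct and is essentially identical to the paper's: both integrate the Palm characterization \eqref{palm bias} over $B$ to obtain $\e(X h(X)) = \int_B \e\, h(\xi_s(B))\,\mu(ds)$, substitute $\xi_s = \xi + \delta_s$ (so that $\xi_s(B) = X+1$ for $s \in B$), and conclude via \eqref{sizebias}. The only cosmetic difference is that the paper writes $f$ where you write $h$ and phrases the final step as deriving the defining relation for $X^*$ rather than invoking it directly.
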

\begin{proof}
Write $\mu = \e \xi$ for the intensity measure;  this is the
deterministic element of $M$ with $\mu(B) = \e ( \xi(B))$ for
measurable $B \subset S$.  With this notation, \eqref{palm bias} says
\begin{equation}\label{palm bias mu}
  \e g( \xi_s) = \frac{\e (\xi(ds) g(\xi) )}{\mu (ds)}.
\end{equation}

The characterization of Palm measures, as written in
 \eqref{palm bias mu}, is shorthand for its multiplied out version,
$$ \e g( \xi_s)\  \mu(ds)= \e (\xi(ds) g(\xi) ),
$$
so that for measurable $B \subset S$, 
$$
 \int_B  \e g( \xi_s)\  \mu(ds)
= \int_B  \e (\xi(ds) g(\xi) ) = \e \int_B  (\xi(ds) g(\xi) ) = \e
(g(\xi) \xi(B)).
$$
Now fix a measurable $B \subset S$ with $a := \e \xi(B) \in (0,\infty)$,
and fix a bounded measurable $f: \mathbb{R} \to \mathbb{R}$. This
induces
a bounded measurable function  $g\! : M \to \mathbb{R}$ %, which for
                                %$\zeta \in M$ has 
via $g(\zeta) = f(\zeta(B))$.  This yields
$g(\xi)=f(\xi(B))$,
so with $X = \xi(B)$ the right side of the display above is $\e (f(X)
X)$.  From the hypothesis $\xi_s=\xi + \delta_s$ we have, for every $s
\in B$, $g(\xi_s) = f(\xi_s(B))=f((\xi +
\delta_s)(B))=f(\xi(B)+1)=f(X+1)$, and the left side of the display is
$ \int_B \e g( \xi_s)\  \mu(ds)= \e   f(X+1) \int_B  \mu(ds)=\e f(X+1) \,
\mu(B) = \e f(X+1)\,  \e X$.
Hence, for bounded measurable $f$, $\e(X f(X)) = \e f(X+1)  \, \e X$.
This last relation, now proved for an arbitrary bounded measurable 
 $f: \mathbb{R} \to \mathbb{R}$, shows that $X^* =^d X+1$.
\end{proof}

\ignore{   IGNORE
Remark:  Lemma \ref{lemma new1}, combined with Corollary \ref{cor poisson}, shows that $X$ is
Poisson
with parameter $a= \e \xi(B)$, and this is one of the two
characterizing properties of Poisson processes.  The other property is
mutual independence:  for disjoint measurable sets $B_1,B_2,\ldots
\subset S$, the random variables $X_1:= \xi(B_1), X_2 := \xi(B_2),...$
are
mutually independent. 
}    %  END IGNORE

\begin{lemma}\label{lemma new2}
Suppose that $G$ is an event, and $X$ is Poisson($\lambda$).  Write
\begin{eqnarray*}
  p_i &=& \p(X=i),\\
q_i &=& \e(1(X=i) 1(G)),\\
r_i &=& \e(1(X=i) 1(G^c)),
\end{eqnarray*}
so that $p_i=q_i+r_i$ for $i=0,1,2,\ldots$.
Suppose that for $i=0,1,2,\ldots$,
\begin{equation}\label{cite recursions}
 (i+1)\, q_{i+1}= \lambda \,q_{i}, \ \ \ \ (i+1)\, r_{i+1}= \lambda \, r_{i}.
\end{equation}
Then $X$ and $G$ are independent.
\end{lemma}
\begin{proof}
The proof is similar to that of Proposition \ref{proposition baby}.
 In particular, applying induction as there to \eqref{cite recursions}, we obtain $q_i=\p(G)e^{-\lambda}\lambda^i/i! = \p(G)\p(X=i)$.
\end{proof}
\begin{proof}[Proof of Theorem \ref{thm palm poisson}]
Consider a measurable $B \subset S$ for which $\lambda :=  \e
\xi(B) \in (0,\infty)$.  Lemma \ref{lemma new1}, combined with Corollary \ref{cor poisson}, shows that 
$X=\xi(B)$ is
Poisson($\lambda$).  Now consider an event  $G$ which is
measurable with respect to the restriction of $\xi$ to $B^c$.  Lemma
\ref{lemma new2} shows that $G$ is independent of $X$, with
an argument similar to the proof of
Lemma \ref{lemma new1} verifying that the hypotheses of Lemma
\ref{lemma new2} are satisfied.   Hence for disjoint subsets  $B_1,B_2,\ldots
\subset S$ each having $0 < \e \xi(B_i) < \infty$, $\xi(B_1)$ is
independent of $(\xi(B_2),\xi(B_3),\ldots)$, so by induction, the
Poisson distributed random variables $\xi(B_2),\xi(B_3),\ldots$ are
mutually independent.
\end{proof} 

\emph{Acknowledgement}.   We are grateful to the referee who asked us
to consider the the connection between size bias and Palm measures.

\section{Martingale size bias, and size bias for Galton Watson trees}\label{sect bias GW}

This section is based mostly on  \cite{LPP}, which employs a notion of size-biased Galton Watson trees to 
give a  
conceptual and intuitive proof of the Kesten-Stigum theorem, which we briefly describe below.
%result showing that for a supercritical branching process,
%finiteness of $\e L \log L$, where $L$ is the 
%offspring count variable, is equivalent to  uniform integrabilty of the associated scaled 
%population count martingale.
We also found  \cite{ZhanShi}, \cite{Dawson}, and \cite{LyonsPeres}  useful for clarifying 
lingering issues involving  %concerning 
the ``spine'' or ``backbone'' of the size biased Galton Watson trees. 
%also described below.

For the reader %who was 
already familar with the size biased Galton Watson tree, here is a brief description of these issues.
a) Is the spine intrinsic to the size-biased tree, or is it just an ingredient in a 
particular construction? b) Given just the tree, generated using a spine but without labels
to show where the spine lies, can the spine be located? 
%   RIchard now abandons questions of this type, for lack of a clear understanding!   BUT, would like to think about it, though not for the csb paper.  c) For $n \ge 2$, conditional on the event $\{Z_n=k\}$, if an individual in generation $n$
%having unusually many cousins (grandchildren of the grandparent) 
%possesses an unusually large number of nephews/nieces (grandchildren of one's parent, not including one's children), does it thereby have an extra large chance to lieon the spine, and hence an unusually large expected number of children?  d)  
c) Can one start with the unbiased Galton Watson tree, and then add a process, of immigrants and their descendants, to get a coupling with a size biased tree?  d)  If yes to c), can this be done so that the original tree and the difference are independent?  We answer a) and b), but leave c) and d) 
alone.\footnote{Our reason for leaving c) and d) alone is that there are conflicting notions of the use of immigrants
in constructing the size biased tree (though leading, in the end, to the same   distribution on trees).  %class of trees).
In \cite{LPP}, the notion is implicitly established by declaring that the size biased process, with spine removed, is a branching process with immigration --- starting with zero individuals, so that every individual is either an immigrant, or else descended from an immigrant.  Our construction \eqref{bias ancestry} uses a different notion, leading to a coupling in which there is the original unbiased tree, plus immigrants, plus individuals descended from immigrants.}

%As a compromise between being self-contained, and brief, we now give
%a \emph{very} brief description of the method from  \cite{LPP}, whose goal was to give a new,
%intuitively appealing proof of the
%Kesten-Stigum theorem.  
The  Kesten-Stigum theorem concerns the following: Suppose we are given a Galton-Watson 
branching process with offspring variable $L$, whose 
distribution is given by $\p(L=k)=p_k$, with mean $m=\sum k p_k
\in (0, \infty)$, so that the number of individuals $Z_n$ at time $n$ has 
$\e Z_n = m^n$. The process given by 
$W_n := Z_n/m^n $ is a nonnegative martingale, hence converging almost surely to some limit $W$.
%with an almost sure limit, say $W_n \to W$.   
For $m \le 1$, it is easy to
prove that $Z_n \to 0$ a.s., so $W=0$; equivalently $\e W=0$. In particular the martingale is not
uniformly integrable.  But things are more subtle when $1 < m < \infty$.  For this case the
Kesten-Stigum theorem asserts that if $\e L \log L < \infty$, then $\e
W=1$, while if  $\e L \log L = \infty$, then $\e W=0$.  

The proof of the Kesten-Stigum theorem in 
\cite{LPP} begins with the observation that $W_n$ serves as
Radon-Nikodym derivative with respect to the usual distribution of $[T]_n$, the
branching process tree $T$ observed up to time $n$, of the distribution
size-biased by $Z_n$;  and since the resulting size biased distributions are
consistent, there results a notion of size-biased tree, which we call 
$T^*$.  Namely, this tree is obtained by picking  %(HOW??) 
one ``special'' individual from each
generation, and changing its offspring distribution from that of $L$ to that of the size-biased
version $L^*$, which satisfies, in particular, $\frac{1}{m} \e \log L^* =  \e L \log L$.  The Kesten-Stigum criterion is thus whether $\e \log L^*$ is finite or
infinite.  
%(The children themselves have the original offspring distribution, however.)
%changing the distribution of his number of offspring from the
%distribution of $L$ to the distribution of $L^*$;  those distributions
%were related so that $\frac{1}{m} \e \log L^* =  \e L \log L$.  
Write
$Y_n$ for the number of {\em extra} children injected into generation $n+1$  by % the
size-biasing the number of children of the selected special individual from generation $n$;   these individuals counted by $Y_n$ %they, 
(but not their descendants), are called \emph{immigrants}.

%Omitting details, including ``the martingale dichotomy'':
It turns out that if $\e \log L^* < \infty$ then the process of immigrants grows
sub-exponentially, so the contribution from immigrants {\em and} their
descendants, up to time $n$, is $O(m^n)$. Then under the
size-biased distribution, $W<\infty$ a.s. and the size-biased law of $t$ is
absolutely continuous with respect to the original law.  On the other
hand, if 
 $\e \log L^* = \infty$ then the contribution  %to limsup 
 from immigrants, even
without counting their descendants, grows faster than any exponential;
in particular $W=\infty$ a.s. under the size-biased distribution, and
$W=0$ a.s. under the original distribution. Thus size-biasing plays a natural role in the understanding
of this result.  See \cite{LPP} for a proof and further information.

%Then, for the final step, one may quote a result from Seneta 1970 on branching process with immigration; the result is based
%on a favorite problem for qualifying exams:  for nonnegative i.i.d. $A_1,A_2,\ldots$, $\limsup_n A_n/n$ is zero or else infinite,
%according to whether $\e A_1$ is finite or infinite.  With $A_n = \log^+ Y_n$, one sees that if $\e \log L^* < \infty$, then $\e \sum Y_n/m^n$

\subsection{Martingale size bias}\label{Martingale size bias}

Recall that in Section \ref{sect basics one}
we discussed size-biasing a process
 $\BX  %{\bf X} 
= (X_1,X_2,\ldots) \in [0,\infty)^\infty$ with joint law $\mu$,  by size-biasing 
one of its coordinates  $X_i$, assuming that
$a_i := \e X_i \in (0,\infty)$.
The recipe is given by \eqref{RN infinite}, and we wrote
%and for a particular choice of $i$, 
%$a_i := \e X_i \in (0,\infty)$.  To bias by $X_i$ means, analogous to 
%\eqref{size bias RN}, to switch to the joint law $\mu^{(i)}$ on $[0,\infty)^\infty$ with Radon-Nikodym derivative
%\begin{equation}\label{RN infinite}
%\frac{d \mu^{(i)}}{d \mu}  = \frac{x_i}{a_i}.
%\end{equation}
${\bf X}^{(i)} = (X_1^{(i)},X_2^{(i)},\ldots)$ for the resulting process.
A natural question:  what is the result if $\BX$ is a martingale?   

So assume now that $\BX$ is a martingale,  nonnegative and nonconstant.  This implies, in particular, 
 that for each $i=1,2,\ldots$, the mean $a_i := \e X_i$ is in $(0,\infty)$, with $a_1=a_2=\dots$; call the common value $a$.   For any $i,n \ge 1$, the specifications  \eqref{RN infinite} of the distributions of ${\bf X}^{(i)}$ and ${\bf X}^{(n)}$, restricted to the first $n$ coordinates,  with Radon-Nikodym derivatives expressed in terms of arbitrary bounded measurable 
$g_n : [0,\infty)^n \to \BR$, are that
\begin{equation}\label{sizebias MG ni}
 \ \ \e  g_n(X_1^{(i)},X_2^{(i)},\ldots,X_n^{(i)})  =  \frac{1}{a} \ \e (X_i  \, g_n(X_1,X_2,\ldots,X_n)),
\end{equation}
and
\begin{equation}\label{sizebias MG nn}
 \ \ \e  g_n(X_1^{(n)},X_2^{(n)},\ldots,X_n^{(n)})  =  \frac{1}{a} \ \e (X_n  \, g_n(X_1,X_2,\ldots,X_n)).
\end{equation} 
By the martingale property of $\BX$, for all $i \ge n \ge 1$, the righthand sides of 
 \eqref{sizebias MG ni} and \eqref{sizebias MG nn} are equal to each other; hence
 \begin{equation}\label{sizebias MG consistent}
 \ \text{ for } i \ge n \ge 1, \ 
 (X_1^{(i)},X_2^{(i)},\ldots,X_n^{(i)})   =^d
 (X_1^{(n)},X_2^{(n)},\ldots,X_n^{(n)}) .
\end{equation} 
 
Now \eqref{sizebias MG consistent} says that we have a consistent family of finite dimensional distributions for a process, which we naturally call a {\em size-biased martingale}, and which we denote by
$\BX^*=(X_1^*,X_2^*,\ldots)$. The justification for this notation is that each individual coordinate 
$X_i^*$ of the process $\BX^*$  is a size biased version of $X_i$, in the sense of the 
original definition \eqref{sizebias}.  The proof, in turn, of this latter statement is that from 
\eqref{sizebias MG consistent} and the discussion in Section 
\ref{sect basics one} we must have $X_n^{(n)}  =^d X_n^*$,
while \eqref{sizebias MG consistent} applies for every $n$.
To recapitulate, the joint distribution of the first $n$ coordinates of 
$\BX^*$ is given by
\begin{equation}\label{sizebias MG n}
 \ \ \e  g_n(X_1^*,X_2^*,\ldots,X_n^*)  =  \frac{1}{a} \ \e (X_n  \, g_n(X_1,X_2,\ldots,X_n))
\end{equation} 
for all bounded measurable 
$g_n : [0,\infty)^n \to \BR$.   Considering the $n$-th coordinate marginally, 
the distribution of $X_n^*$ agrees with the elementary definition \eqref{sizebias} 
applied to $X_n$ in the role of $X$.
The martingale property of $\BX$ plays an essential role in this construction;  had $X_1,X_2,\ldots$  
been arbitrary non-negative random variables, each with strictly positive finite mean, 
while the size biased distributions for the $X_1^*, X_2^*,\ldots$ considered individually would still be 
given by \eqref{sizebias}, the \emph{joint} distribution %would not be determined by   that formula.
is not specified  by \eqref{sizebias}.

%A bit more generally, one might have a nonnegative process
Sometimes, one starts with a nonnegative process  %  Richard changed it back,  this construction is, historically, not
%  for the sake of generalization.  First came the Galton Watson
%  process, later came the martingale.
$\BZ=(Z_1,Z_2,\ldots)$ with means $a_i := \e Z_i \in (0,\infty)$,
in which the sequence $a_1,a_2,\ldots$, is not constant;
but after {\em scaling out the means} by defining $X_i := Z_i/a_i$,
the new process $\BX = (X_1,X_2,\dots)$ turns out to be a martingale.  
(An example of this is given by $Z_n := $ the size of the population at time $n$, in any Galton Watson process 
where the mean number of offspring per individual is $m \in (0,1) 
\cup (1,\infty)$.)    
Then $\BX$ can be size
biased as above, yielding $\BX^*=(X_1^*,X_2^*,\ldots)$. In light of
\eqref{scaling}, if we set $Z_i^* := a_i X_i^*$ for each $i$, the
distribution of this $Z_i^*$ necessarily agrees with the elementary definition
\eqref{sizebias} of size bias
applied to $Z_i$ in the role of $X$, but in addition the joint distribution of the 
\emph{size biased process} $\BX^*$ induces a joint distribution on $\BZ^* :=(Z_1^*,Z_2^*,\ldots)$,
i.e we have obtained a natural \emph{coupling} of the marginal size biased distributions.
To recapitulate:  given  nonnegative random variables
$Z_1,Z_2,\ldots$ with $\e Z_i \in (0,\infty)$,
and a joint distribution for a process $\BZ=(Z_1,Z_2,\ldots)$, 
if
$\BZ$ is a martingale, or the process derived from $\BZ$ by scaling
out the mean motion is a martingale, then there is a process $\BZ^*$,
simultaneously size biasing every coordinate.

%Sometimes, one starts with a nonnegative process
%$\BZ=(Z_1,Z_2,\ldots)$, with $a_i := \e Z_i \in (0,\infty)$,
%but the sequence of means, $a_1,a_2,\ldots$, is not constant.  One
%\emph{scales out the mean motion} by defining $X_i := Z_i/a_i$, and then,
%sometimes, this new process is a martingale, so that $\BX$ can be size
%biased as above, to get $\BX^*=(X_1^*,X_2^*,\ldots)$.  In light of
%\eqref{scaling}, with $Z_i^* := a_i X_i^*$ for each $i$, the
%distribution of this $Z_i^*$ agrees with the elementary definition
%\eqref{sizebias} 
%applied to $Z_i$ in the role of $X$, but in addition, the
%\emph{size biased process}  $\BZ^* :=(Z_1^*,Z_2^*,\ldots)$ gives a \emph{coupling}
%of those marginal size biased distributions.
%To recapitulate:  given  nonnegative random variables
%$Z_1,Z_2,\ldots$ with $\e Z_i \in (0,\infty)$,
%and a joint distribution for a process $\BZ=(Z_1,Z_2,\ldots)$, 
%if
%$\BZ$ is a martingale, or the process derived from $\BZ$ by scaling
%out the mean motion is a martingale, then there is a process $\BZ^*$,
%simultaneously size biasing every coordinate.

Note that if we start with a martingale $\BX$, there is no particular reason for $\BX^*$ %need not to 
to be a martingale.  Similarly, the \emph{process with mean motion
  scaled out}, 
say $\BY$ with $Y_n := X^*_n/ \e X^*_n$, need not be a martingale.
%, since it is possible that   $\e X_n^* = \infty$.
However, there is an important class for which the martingale-based
process bias preserves structure: namely, if the process is {\em also} a Markov chain, 
then Markov structure is preserved.  We will prove this, in
Lemma \ref{lemma markov}.   

We limit ourselves to the case where the state space is
$\bbbz_+$, the nonnegative integers, for the sake of
easy notation, and also we limit ourselves to
the time homogeneous case;  neither of these restrictions is
essential.  To comply with the common convention for indexing time,  we switch the index set from
$\bbbn$, the natural numbers, to $\bbbz_+$.   And for later application to the %be useful in the
special case of Galton Watson processes, we explicitly allow the
possibility that state 0 is a trap.   

\begin{lemma}\label{lemma markov}
Suppose $\BX = (X_0,X_1,\ldots)$ is a Markov chain on $S=\bbbz_+$ with
transition matrix $M$;  assume that $X_0=1$.   Suppose that 
$r_i := \sum_j  j \, M_{ij} < \infty$, for all $i \in S$.
Define a new stochastic matrix $N$, row by row, by size biasing the \emph{rows} of
$M$, if possible:
\begin{equation}\label{N from M}
  \text{ if } r_i>0, \text{ then } N_{ij} :=  j \, M_{ij}/r_i; \ \text{ if }
  r_i = 0, \text{ then } N_{ij} := M_{ij}.
\end{equation}
Let $\BY = (Y_0,Y_1,\ldots)$ be the Markov chain governed by $N$, with
$Y_0=1$.
Assume that for all $n, a_n := \e X_n \in (0,\infty)$.  Let $W_n :=
X_n / a_n$. % \e X_n$.
If $(W_0,W_1,\ldots)$ is a martingale, then the  size biased process $\BX^*$
has the same distribution as the Markov chain $\BY$. 
\end{lemma}
Note: the process $\BY$ in  the statement of the Lemma %\ref{lemma markov} below 
may be considered as a special case of
\emph{Doob's $h$-transform}; see \cite[p. 296]{RW}.  
The letter  $h$ is mnemonic for \emph{harmonic}, and
 \eqref{N from M} is the special case where $h$ is the identity function --- as we pointed out, 
using the  fortuitous choice of the letter $h$ for the identity function, at the start of Section 
\ref{size particular}.
\begin{proof}
Fix a time $n$ and a sequence $z_0z_1\cdots z_n \in
S^{n+1}$, with $z_0 =1$.  We need to show that $\p(Y_0Y_1\cdots Y_n = z_0z_1\cdots z_n)$
$:= N_{1 z_1}N_{z_1 z_2}\cdots N_{z_{n-1} z_n}$
 is equal to 
$\p(X_0^*X_1^* \cdots X_n^* =z_0z_1\cdots z_n)$ 
$:= (z_n/\e X_n) \, \p(X_0X_1\cdots X_n = z_0z_1\cdots z_n)$
$=$ \newline $  (z_n/\e X_n) \, M_{1 z_1}M_{z_1 z_2}\cdots M_{z_{n-1} z_n}$. 
Observe that the martingale hypothesis implies that state 0 is a trap
for both processes, i.e., $M_{00}=N_{00}=1$, and that no other state
leads only to 0, i.e., for $i>0$, $M_{i0}<1$ and $N_{i0}<1$, hence
$r_i >0$ and $N_{i0}=0$.   So if some $z_k=0$, then $z_n=0$ and using $k$ as the
earliest index for which $z_k=0$, we have $N_{z_{k-1}z_k}=0$, hence 
$\p(Y_0Y_1\cdots Y_n = z_0z_1\cdots z_n)=0$
$= \p(X_0^*X_1^*\cdots X_n^* =z_0z_1\cdots z_n)$.  Otherwise, 
all $z_i \ne 0$, and the factor for time $k$, of the form    $N_{ij}$,
 is given by  $j \, M_{ij}/r_i$,
specifically with $i=z_{k-1}, j=z_k$.  To use the martingale property for
$W$, recall that $X_n = a_n W_n$, and note that $(X_{k-1}=i)$ is the same event 
as $(W_{k-1} = i / a_{k-1})$.  Hence  $r_i = \e (X_k | X_{k-1}=i) 
= \e ( a_k W_k | W_{k-1} = i / a_{k-1}) = i \, a_k/a_{k-1}$.
Hence the product 
$N_{1 z_1}N_{z_1 z_2}\cdots N_{z_{n-1} z_n}$ telescopes, to the
desired value.
\end{proof}

\subsection{Tree size bias}
 
%\def\BT{\mathcal{T}}  %  usage:   t for a deterministic tree, T for a
%random, so it could be  bold T for the space,  but we tend to use
%bold for process,  hence hereby chose cal for space    CHANGED  to \cT
 
Following \cite{LPP} %Dawson 2009 (we \emph{will} provide the reference,  and maybe, maybe not present more of the details, such as the formal definition of rooted plane trees, and the notation for nodes in such trees), 
`tree' 
will denote a rooted plane tree, possibly infinite, in which every individual has a finite number, possibly zero, of descendants.  We consider the set
$\cT$ of all  trees, and for $t \in \cT$  we 
let $[t]_n$ be the set of all trees whose first $n$ levels agree with $t$.
Write $\cT_n \subset \cT$ for the set of trees of height at most  $n$;
each $\cT_n$ is countable.
The sigma algebra $\cF_n$ on $\cT$ is generated by sets of the form
$[t]_n$, $t\in \cT_n$, 
and the sigma algebra $\cF$ is generated by the union of the $\cF_n$.
%  (Here Richard disagrees with Dawson;  he said  height exactly n, but I don't think this is correct, since trees that go extinct are of interest.)  
A probability distribution on $(\cT,\cF)$ can then be specified via a
consistent family of probability distributions on $\cF_n$, $n=1,2,\ldots$.
  
 %  WAS  Write $z_n(t) \ge 0$ for the number of individuals in level $n$ of $t$; by our convention that the tree is rooted, we always have $z_0(t)=1$. For each fixed $n \ge 1$, this gives a nontrivial notion of size.  Then {\em any} 
 Write $z_n(t) \ge 0$ for the number of individuals in level $n$ of $t$; for each fixed $n \ge 1$, this gives a nontrivial notion of size.  (By our convention that the tree is rooted,
 we always have $z_0(t)=1$.) 
  {\em Any} probability distribution on $\cT$, yielding  random trees $T$,
 can be size biased, giving a new distribution yielding trees $T^*$, 
\emph{provided} that with $Z_n := z_n(T)$,
 we have both $ \e Z_n \in (0,\infty)$    %   $ \infty >\e Z_n > 0$ 
 \emph{and} that the process ${\bf W} = (W_0,W_1,W_2,\ldots)$ with 
 $W_n := Z_n/ \e Z_n$
 is a martingale with respect to the filtration $\{\cF_n\}$. % , $n=1,2,\ldots$.  
Specifically, for each $n$ we % the people 
bias the distribution of trees 
%the distribution of trees $T^*$ may be given 
of height at most $n$, via the following formula:
%in the discrete formulation,
 for a given deterministic tree $t_n$ of height at most $n$, with
 $z_n \ge 0$ individuals at level $n$, we set
\begin{equation}\label{bias tree}
   \p(T^* \in [t_n]_n) ) = \frac{z_n}{\e Z_n} \  \p(T \in [t_n]_n) ) .
\end{equation}
The proof that the distributions are consistent, and that we have thus defined a tree-valued
process, depends on the martingale property of the process  ${\bf W}$.

\subsection{The size biased Galton Watson tree, with or without a spine}
Returning to Galton Watson trees, 
we would like to point out that passing from a Galton-Watson branching \emph{process}
to the associated random \emph{tree} depends not only  on the offspring 
distribution $(p_0,p_1,p_2,\ldots)$, but also an (often implicit) imposition of symmetry. 
%or lack thereof.
%It is common to believe that Galton-Watson branching \emph{process}
%and \emph{tree} are given only by naming the offspring distribution $(p_0,p_1,p_2,\ldots)$,
%but there is a subtlety involving tree and some implicit %maximal 
%symmetry.  
Specifically, let ${\bf L} = (L_{n,i})_{n\ge 0, i \ge 1}$ be an
array with independent identically distributed entries, where $\p(L_{n,i}=k)=p_k$.  The usual recursive
construction for the process of \emph{counts}, in which $Z_0 :=1$, and then for $n
\ge 0$ we set
\begin{equation}\label{ancestry}
Z_{n+1} := \sum_{1 \le i \le Z_n} L_{n,i},
\end{equation}
 gives rise to a plane
tree if we declare that, for $i=1$ to $Z_n$, the $i$th individual in generation
$n$ has $L_{n,i}$ children. The distribution of the plane tree from
this standard construction has \emph{maximal}
symmetry:  at any time $n$, all $Z_n$ subtrees, rooted at an individual of generation
$n$, are equal in distribution to the original process.  

But alternatively, for certain purposes, given $Z_n=k$ we could sort
$L_{n,1},\ldots,L_{n,k}$ in nonincreasing order, now renamed
$A_{n,1} \ge \cdots  \ge A_{n,k}$, and then declare that the $i$th individual in generation
$n$ has $A_{n,i}$ children.   With this construction we would still have the same counts
$Z_{n+1}$ as before, and hence the same process $(Z_0,Z_1,Z_2,...)$,
but now a different tree lacking distributional symmetry. Namely, 
if a parent has more than one child, then his second-born
child is guaranteed to produce no more grandchildren than his first-born
child produces, and so forth.

By common agreement, the Galton Watson \emph{tree}  
is the one given by the first construction, with maximal symmetry,
rather than the one arising,
say, 
from sorted offspring counts.
%not the tree from the paragraph above, but rather,
% the one given by the first construction, with maximal symmetry. 
To size bias 
this tree
let us once again start with \eqref{noniid}, which says that a sum of
independent (non-negative, finite nonzero mean) 
random variables is size biased by applying size bias to a
single summand.   In the spirit of maximal symmetry, we fix one particular
joint distribution for $(L,L^*)$, with $L^* \ge L$ always; see 
 \eqref{lies above}.  Then augment
the array ${\bf L}$ so that it becomes ${\bf L} =
((L_{n,i},L^*_{n,i}))_{n\ge 0, i \ge 1}$ whose entries are i.i.d. pairs,
but possibly with dependence within in each pair. (In Section \ref{sect inf div},   Theorem
\ref{thm inf div and Levy} says precisely when it is possible to have
$L$ and $L^*-L$ independent.)   

Continuing in the spirit of
\emph{maximum} symmetry, to construct the size biased tree $t^*$, 
one would naturally  start with i.i.d. uniform (0,1) random variables
$U_0,U_1,\ldots$, independent of ${\bf L}$, with $U_n$ used to decide \emph{which}
individual in generation $n$ will have a size biased number of
children.  A little more formally, the tree is constructed
recursively:
for each $n \ge 0$,
given the tree $t^*$ observed up to time $n$, with $Z_n^*$ individuals
at time $n$ (and, always $Z_n^* \ge 1$), take $I_n := \lceil Z_n^* U_n
\rceil$; then for $i \ne I_n$ the $i^{ \rm th}$ individual in generation
$n$ has $L_{n,i}$ children, but for $i=I_n$ the number
of children is $L^*_{n,i}$.   The resulting tree $t^*$ has 
\begin{equation}\label{bias ancestry}
Z_{n+1}^* := \sum_{1 \le i \le Z_n^*} L_{n,i} + ( L^*_{n,I_n} - L_{n,I_n}),
\end{equation}
and by \eqref{noniid}, this is equal in distribution to 
$(\sum_{1 \le  i \le Z_n^*} L_{n,i})^*$ --- note that the sum, being size-biased,
has $Z_n^*$ i.i.d. summands, rather than $Z_n$ summands as in \eqref{ancestry}.

\begin{exercise}\label{tree exercise 1}
Check that the distribution of the tree produced by the above
procedure has distribution satisfying \eqref{bias tree}.
\end{exercise}
\vspace{-1pc} 
\noindent To be complete, without spoiling the reader's fun, we 
supply a solution 
but postpone it until the end of Section \ref{sect proof dichotomy}.   
For historical reasons, we
hereby name the tree from 
above procedure as the \emph{spineless} (biased) tree.

In contrast to the maximal symmetry spineless procedure described
above, and following \cite{LPP},  for
$n\ge 1$, we could restrict the $n$th generation candidates for size
bias, instead of all $Z_n$ individuals,  to just the $S_{n-1} :=
L^*_{n-1,V_{n-1}}$ children of the individual $V_{n-1}$ in generation $n-1$
who was size biased.  So in this tree $V_0=1$ and, for $n >0$, $V_n$ is descended from
$V_{n-1}$; and the non backtracking path from the root,
$(V_0,V_1,V_2,\ldots)$ is called the \emph{spine} of the biased tree.
We refer to the tree in this construction as the \emph{spinal} (biased) tree.
  To recapitulate:  the spineless
tree has a list $(I_0=1,I_1,I_2,\ldots)$ of biased individuals, i.e., person $I_n$
in generation $n$ uses the distribution of $L^*$ to dictate his
unusually large number of children. By contrast, while 
the spinal tree has a similar list of biased individuals,
$(V_0=1,V_1,V_2,\ldots)$, with $V_n$ in generation $n$, 
{\em these} biased nodes form a
path.   The procedure with a spine can be traced back to \cite{Kesten} Kesten 1986, who was studying 
 critical GW processes, conditional on nonextinction, and used the term \emph{backbone} instead of \emph{spine}.

Here are two natural questions:

{\noindent} {\bf Question} 1  %we could create a question environment
Given a random spineless tree and a random spinal tree, without being
told which individuals are biased, can one tell which tree came from
which procedure?  

{\noindent} {\bf Question} 2 
Given a random spinal tree,
without being told where the spine is, can one identify the spine?
 
Question 2 is easily answered for subcritical or critical Galton Watson proceses, since in these cases there is a unique infinite path.   We will say more about these cases in Section \ref{sect tree survival}.

Initially, we found it hard to guess the answer to Question
1; it is \emph{not obvious} whether the spineless tree and the spinal tree
have the \emph{same} distribution. But since a computation  confirms that the 
spinal tree also satisfies \eqref{bias tree}, the answer to Question 1
is a definite \emph{no}:  while the two procedures have different joint
distributions for (tree, bias markers),  they have the same
marginal distribution for tree. 
\begin{exercise}\label{tree exercise 2}
Derive the marginal distribution for spinal trees.  Hint, 
it may help to show, at the same time, that conditional on the
tree up to time $n$, with $k = Z_n^*$ individuals at level $n$, 
the spinal position
 $V_n$  
 in generation $n$ 
 is
uniformly distributed from $1$ to $k$.
\end{exercise}
\vspace{-1pc}
\noindent  The marginal distribution for spinal trees % The computation  can be found 
is derived in \cite{LPP}, and we give a derivation, 
at the end of Section \ref{sect proof dichotomy}.
  
The answer to Question 2, for supercritical processes, depends, as does the Kesten-Stigum result, on
whether $\e L \log L$ is finite or infinite.   We spend the rest of this section on this dichotomy, and then return, in section \ref{sect tree survival}, to consideration of the subcritical and critical cases.

\begin{theorem}\label{thm spine}
Consider a supercritical Galton Watson process, with offspring distribution $L$ having $\e L < \infty$, and the size biased tree generated by the spinal procedure.   Given the tree alone:
\begin{enumerate}
\item If $\e L \log L = \infty$, the spine can be correctly identified, with probability 1.
\item If $\e L \log L < \infty$, any procedure to find the  spine fails, with probability 1.
\end{enumerate}
\end{theorem}
 
Before giving the proof, we remark that case (1) is easy, because  $\e L \log L = \infty$ implies that the distributions of tree and size-biased tree are mutually singular.   In the other case, with $\e L \log L < \infty$, the distribution of biased tree is absolutely continuous with respect to the unbiased tree, but some work is needed.
We need something similar to Fano's inequality, giving a lower bound on the error probability for classification, but Fano requires the Kullback-Liebler divergence to be finite.   So we are led to prove two lemmas about selection in a general setting.

\subsection{Selecting one special item out of $k$ choices, assuming $Q  \ll P$}

We want to detect the one item sampled from $Q$, when mixed in with $k-1$ others sampled from $P$.

Lemmas \ref{too obvious} and \ref{lost in noise} below are stated and proved for a fairly general pair of 
distributions $P$ and $Q$ satisfying $Q \ll P$, meaning that $Q$ is dominated by $P$, i.e., $Q$ is absolutely continuous with respect to $P$, i.e., $P(A)=0$ implies $Q(A)=0$.  We will apply Lemma \ref{lost in noise} to a Galton Watson tree for which $\e L \log L $ is finite and $m > 1$; 
then  $P$ will be the GW tree law, and $Q$ the size-biased law.   We remark that though case 1) in the proof of  Lemma \ref{lost in noise} cannot occur in the Galton Watson situation, nonetheless we prefer to have Lemma \ref{lost in noise} in its natural generality.

\noindent  {\bf Setup for selecting the special one,  out of $k$ choices.}\label{sect gold coin}
Fix $k>1$.   Let $P,Q$ be laws on  a Polish space $S$. %, with $Q \ll P$.  
 Let $Y_1,\ldots,Y_k$ be independent, with $Y_1$ sampled from $Q$, and $Y_2,\ldots,Y_k$ sampled from $P$, and let $X_1,\ldots,X_k$ be obtained from the $Y$s by an independent %flat 
 uniformly distributed
 random permutation $\pi \in \mathcal{S}_k$.  So $X_i = Y_{\pi(i)}$, and then $I := \pi(1)$ identifies the index of the $X$ value sampled from $Q$; one might write $\BX = {\bf Y} \circ \pi$.  A \emph{selection procedure} is a function
$f:  S^k \to [k]$,  %designed to identify 
meant as a guess of $I$ as a function of  % given 
the sample $\BX = (X_1,\ldots,X_k)$.  The \emph{score} 
%is the indicator function, $s:  S^k \times {\mathcal S}_k \to \{0,1\}$, with $s(\BX,\pi)=1(f(\BX)=I)$.
for a selection procedure $f$ is  $s(f) := \p(f(\BX)=I)$.

In the case $Q \ll P$, it is ``obvious" that the best selection procedures, i.e., those with maximal score, are precisely those which inspect the likelihood ratio $r(x) = dQ / dP (x)$, and pick $I$ arbitrarily from those indices $i$ which maximize $r(X_i)$, relative to the $k$-sample.   To prove this, while taking into account possible ties, we define a particular candidate $f_0$ for best selection procedure, by picking the \emph{earliest} index among those $i$ 
for which $ r(X_i)= \max(r(X_1),\ldots,r(X_k))$.

\begin{lemma}\label{too obvious} {\bf Optimal selection}.
In the setup above, \emph{any} selection procedure $f:  S^k \to [k]$ satisifes
$$
s(f) \le s(f_0),   %\e s(f(\BX,\pi)) \le  \e s(f_0(\BX,\pi)),
$$
and furthermore   $s(f) = s(f_0)$   %$\e s(f(\BX,\pi)) =  \e s(f_0(\BX,\pi))$ 
implies that with  $J := f(\BX)$   %would \hat{I} be better notation? 
we have 
$r(X_J) = \max(r(X_1),\ldots,r(X_k))$ with probability one.
\end{lemma}
\begin{proof}
Write ${\bf x} = (x_1,\ldots,x_k)$ and $z=r(x_1)+\cdots + r(x_k)$.    Conditional on $\BX = {\bf x}$, 
the odds [$\p(I=1):\p(I=2): \cdots : \p(I=k)$] are equal to
[$r(x_1):r(x_2): \cdots : r(x_k)$], hence $\p(I=i |  \BX = {\bf x} ) = r(x_i)/z$.  Thus $\p ( f_0(\BX)=I |  \BX = {\bf x})
=  \max( r(x_1),\ldots,r(x_k))/z$;  any competing procedure $f$ has 
$\p ( f(\BX)=I |  \BX = {\bf x}) =  r( x_{f(\bf x)})/z$, with 
the same denominator $z$, and no larger a numerator, hence \\ 
 \mbox{$\p ( f(\BX)=I |  \BX = {\bf x}) \le \p ( f_0(\BX)=I |  \BX = {\bf x})$}
 with equality holding if $r(x_{f(\bf x)})=r(x_{f_0(\bf x)})$.  Taking expectation yields
the inequality $s(f) \le s(f_0)$, and the claim regarding when $s(f)=s(f_0)$. 
% .  Taking expectation yieldsthe inequality $s(f) \le s(f_0)$.
\end{proof}  
   
\begin{lemma}\label{lost in noise} {\bf Lost in the noise}.  Let $P$ and $Q$ be distinct probability distributions on $S$, with $Q$ dominated by $P$.  %, denoted $Q \ll P$.
Given %  and   who needs an, or any?
$\varepsilon >0$, there exists $k_0 < \infty$ such that for all $k \ge k_0$, in the setup above, with $Y_1$ distributed according to $Q$ and $Y_2,\ldots,Y_k$ distributed according to $P$, every
selection procedure $f$ has $s(f)<\varepsilon$.
\end{lemma}
\begin{proof}
Using Lemma \ref{too obvious}, we may assume that the selection procedure is $f_0$, choosing an item of maximal likelihood ratio $r$.

Take any version $r$ of the Radon-Nikodym derivative, $r(x) = dQ/dP(x)$; our hypothesis implies that 
$\e r(Y_2) = 1$, and
$r$ is $Q$-almost surely finite, i.e.,  $\p(r(Y_1)<\infty)=1$.   Let $u \in (0,\infty]$ be the essential sup of $r$;
we get the same essential sup with respect to $P$ and with respect to $Q$.
We deal with two separate cases. Informally, in case 1, the special item might achieve
the maximal value, i.e.,  $r(Y_1)=u$, but even so, there is likely to be a many-way tie against noise.
 In case 2, the value $u$ is unobtainable, and most likely, some
nonspecial choice strictly beats the special, i.e., there is some random
$J \in [2,k]$, with $r(Y_J) > r(Y_1)$. More formally:

Case 1: 
  $p := \p(r(Y_1)=u)>0$.  Note, this implies that $u<\infty$,
hence $\p(r(Y_2)=u) = p/u > 0$.  
So we pick $k_0$ so that, if $N$ is  distributed
Binomial($k_0-1,p/u)$,  then
$\p(N \le 2/\varepsilon) < \varepsilon/2$.   Hence the event that
no more  %fewer
than $2/\varepsilon$ items in the sample have $r(X_i)=u$
contributes at most $\varepsilon/2$ to $\p(f(\BX) \ne I)$, and on the
complementary event, by exchangeability, the conditional probability of
picking $I$ correctly is less than $\varepsilon/2$.

Case 2: 
 $ \p(r(Y_1)=u)=0$.  We can pick $t<u$ so that $q := \p(r(Y_1) \ge t) \in (0,\varepsilon/2)$.   Note that
$\p(r(Y_2) \ge t) \ge q/u >0$.   When $r(Y_1)<t$ and $k$ is large, with high
probability at least one of the $k-1$ items generated from $P$ will
have a higher value for $r$  than that of  the item generated from Q.  That is, by taking $k_0$ large enough, 
so that $(1-q/u)^{k_0-1} < \varepsilon/2$,
we can guarantee that for all $k\ge k_0$,
$\p( $ at least one of $Y_2,\ldots,Y_k$ has $r(Y_i) \ge t) > 1 - \varepsilon/2$.   Hence a procedure, like $f_0$, 
that picks one item from among those achieving $\max(r(X_1),\ldots,r(X_k))$, has probability at least $1-\varepsilon$ of picking    incorrectly.   
\end{proof}
   
\subsection{Proof of the spinal identification dichotomy} 
\label{sect proof dichotomy}  

\noindent  {\bf Proof of Theorem \ref{thm spine}}.
Identification of the  spine $(V_0,V_1,V_2,\ldots)$ is the conjunction of identifying $V_n$, for $n\ge 0$.  So with respect to the 0--1 dichotomy,  for (1) it suffices to show that for each $n$, $V_n$ can be correctly identified
with probability 1, while for (2), it suffices to show that for arbitrary  $\varepsilon > 0$, there exists
 $n = n(\varepsilon)$ for which the probability of correct identification of $V_n$  is less than $\varepsilon$.

For the spinal tree, observed up to time $n$, and conditional on the event $Z_n^*=k$, the distribution of the $k$ rooted subtrees
with roots at time $n$ fits exactly the setup described in Section \ref{sect gold coin}:  one of the trees is distributed
 according to $Q$, the law of the size biased GW tree, the other $k-1$ are distributed according to $P$, the unbiased GW tree law, all $k$ are mutually independent, and, using the uniformity of $V_n$ in $[k]$, as proved in \cite{LPP}, the joint distribution of these $k$ trees matches that of the random permutation $\pi$ applied to an ordered sample $Y_1,\ldots,Y_k$ in which $Y_1$ has the distribution $Q$.

For (1), with $\e L \log L = \infty$, Theorem A in \cite{LPP} includes the statement that $P$ and $Q$ are mutually singular.   So pick a subset $A \subset \cT$ having $P(A)=0,Q(A)=1$, and given the $k$ rooted subtrees, pick the node whose subtended tree lies in $A$, 
%use $A$ to 
thereby finding $V_n$ correctly with probability 1.

For (2), with $\e L \log L < \infty$, Theorem A in \cite{LPP} includes the statement that $Q \ll P$.  Given
$\varepsilon > 0$, apply Lemma \ref{lost in noise} to find a value $k_0$ that works for $\varepsilon/2$.  Then
use the supercriticality to find a single  value $n$ for which $\p(Z_n \ge k_0 | Z_n > 0) > 1 - \varepsilon/2$.   The combination shows that,
given all the subtrees rooted at time $n$, the chance of correctly picking the one whose root is $V_n$ is less than $\varepsilon$.  Then, since conditional on the tree up to time $n$, the location of $V_n$ was uniform from 1 to $Z_n^*$, and using conditional independence of past and future, given the present, any function applied to the entire tree, attempting to identify $V_n$, has probability less than $\varepsilon$ of gettng the correct value.
\\
QED    
\\ \\ 

\noindent  {\bf Answers to the exercises on spineless and spinal trees}.  Suppose the given tree $t_n$ of height at most $n$ has $z_j$ nodes at height $j$, for $j=0$ to 
$n$.   Write $GW(t_n)$ for the Galton Watson probability that the tree observed up to time $n$ matches  this tree, corresponding to the last factor on the right side of \eqref{bias tree}.   For both the spineless tree and the spinal tree, we calculate the joint distribution of tree and bias markers up to time $n$, then sum over possible locations of the bias markers, to show that the marginal distribution of tree satisfies \eqref{bias tree}.

\noindent  {\bf Answer to Exercise \ref{tree exercise 1}}
For the spineless procedure, recall our notation $I_n$ designating which individual 
in generation $n$ gets biased,  and write ${\bf I} = (I_0, I_1,I_2, \ldots,I_{n-1})$
for the process naming those individuals arising in forming a tree up
to time $n$.  The possible values for ${\bf I}$ form a set $S$, with $|S| = z_0 z_1 \cdots z_{n-1}$, and using the notation $[k] := \{1,2,\ldots,k \}$, $S$ is the Cartesian product
$S = [z_0] \times [z_1] \times \cdots \times [z_{n-1}]$.   Given both $t_n$ and ${\bf i} \in S$, so that we 
specify  
the tree up to time $n$, \emph{and} which nodes were biased, at stages 0 to $n-1$, 
write $k_0,k_1,\ldots,k_{n-1}$ for the respective offspring counts for the biased nodes.   For the joint probability of tree and bias markers, taking into account first size bias factors of the form  $\p(L^*=k)/\p(L=k) = k/m$, and then factors of the form $1/z_j = \p(I_j=i_j)$,  and using $z_0=1$,   we have
$$
 \p(T^* \in [t_n]_n), {\bf I} = {\bf i} ) = \frac{k_0}{m} \, \frac{k_1}{m} \cdots \frac{k_{n-1}}{m}
  \ GW(t_n) 
  \ \frac{1}{z_1 z_2 \cdots z_{n-1}}.
$$
When we sum over $S$ to get the marginal distribution of tree up to time $n$, the sum factors as a product indexed by time $j=0$ to $n-1$,
and the $k_j$ values  sum to $z_{j+1}$, explicitly 
$k_{j,1}+k_{j,2}+\cdots +k_{j,z_j}=z_{j+1}$, yielding
$$
\p(T^* \in [t_n]_n) = \sum_{{\bf i} \in S} \p(T^* \in [t_n]_n), {\bf I} = {\bf i} ) = \frac{z_1}{m} \, \frac{z_2}{m} \cdots \frac{z_{n}}{m}
  \ GW(t_n) 
  \ \frac{1}{z_1 z_2 \cdots z_{n-1}}.
$$
$$
= \frac{z_n}{m^n} \ GW(t_n),
$$ 
which is \eqref{bias tree} in the Galton-Watson case.

\noindent  {\bf Answer to Exercise \ref{tree exercise 2}}
For the spinal procedure, write $V_0, V_1,\ldots, V_{n}$ for the  random % location of the 
spine, for the tree restricted up to time $n$.  Given the tree, the initial segments of spine,
 down through times $0,\ldots,n$, are   in one-to-one correspondence with the nodes 
$V_0, V_1,\ldots, V_{n}$
%$V_n$ in the $n$th generation and 
along the spine.  Given both $t_n$, and $v_n \in [z_n]$ to serve as the value of $V_n$, so that we 
specify  
the tree up to time $n$, \emph{and} which nodes were biased, yielding a path $v_0,v_1,\ldots,v_n$ from root to $v_n$,
write $k_0,k_1,\ldots,k_{n-1}$ for the respective offsping counts for the biased nodes.   For the joint probability of tree and bias markers, taking into account first size bias factors of the form  $\p(L^*=k)/\p(L=k) = k/m$, and next factors of the form $1/k_j = \p(V_{j+1}=v_{j+1})$,    we have
$$
 \p(T^* \in [t_n]_n), V_n = v_n)  ) = \frac{k_0}{m} \, \frac{k_1}{m} \cdots \frac{k_{n-1}}{m}
  \ GW(t_n) 
  \ \frac{1}{k_0 k_1 \cdots k_{n-1}}= \frac{1}{m^n} \ GW(t_n).
$$
Summing over the $z_n$ possible values for $v_n$, to give the marginal distribution of tree up to time $n$, shows that the spinal tree satisfies \eqref{bias tree}.
   
\subsection{Subcritical and critical GW, conditional on survival forever}
\label{sect tree survival}

For a Galton Watson process, consider the event of survival forever, that is, $A :=\{ \forall n,Z_n > 0 \}$.
If the process is subcritical --- $0 < m <1$, or critical --- $m=1$,
then $\p(A)=0$. 
 Conditioning on $A$, by definition, means taking the limit, as $n \to
 \infty$, 
of conditioning on $Z_n>0$.  Athreya-Ney \cite[pp. 58]{AN}
prove that, when $\p(A)=0$, conditioning on $A$ achieves the same
distribution as size biasing, 
although their
result imposes the extra hyothesis that $p_1 := \p(L=1)>0$.  
In this section, we give an elementary proof, without the extra hypothesis.

\begin{lemma}\label{odds lemma 1}
  Suppose that $q$  and  $q(\t)$ are probability measures on $\bbbn$, (indexed by $\t \in \bbbn$ or $\t \in \bbbr$)
  so that, in particular, $q_j \ge 0$ for all $j \in \bbbn$, and $1 = \sum_{j \ge 1} q_j$.
 % ;  we may take $\t$ to run over the nonnegative integers, or reals.   
  Let $S := \{ j : q_j > 0\}$ be the support of $q$, and assume that $S$ is also the support of $q(\t)$, for every $\t$.   Suppose that as $\t \to \infty$, the $q(\t)$-odds converge to the $q$-odds, that is
\begin{equation}\label{odds}
\forall  j, k \in S, \ \frac{q_j(\t)}{q_k(\t)} \to \frac{q_j}{q_k}.
\end{equation} 
Suppose also that 
\begin{equation}\label{odds tight}
\text{the family }  \{ q(\t) \} \text{ is tight}.
\end{equation} 
Then $q(\t) \to^d q$, equivalently,
\begin{equation}\label{odds conclusion}
\forall  j \in \mathbb{N},  \ q_j(\t) \to q_j.
\end{equation}
\end{lemma}
\begin{proof}
 By tightness, every subsequence of the $q(\t)$ has a subsubsequence with a limit.   
 Pick such a subsubsequence, and call its limit $p$.   Along this subsubsequence,
$q_j(\t) \to p_j$    and  $q_k(\t) \to  p_k$;   if $p_k > 0$ then $p_j / p_k  = \lim  q_j(\t) / q_k(\t)   =  q_j / q_k$,  for every 
$ j \in S$.   This implies  $p =q$, and of course all convergent subsequential limits being the same $q$ implies that $q(\t) \to^d q$ as $\t \to \infty$.
\end{proof}

\begin{lemma}\label{odds lemma 2}
Take the same setup as Lemma \ref{odds lemma 1}, % without assuming
                                % tightness. Suppose
assuming \eqref{odds}, but in place of \eqref{odds tight}, supposing
instead that 
\begin{equation}\label{odds monotone}
\forall  j,k \in S \text{ with } j>k, \  \frac{q_j(\t)}{q_k(\t)} \nearrow \frac{q_j}{q_k},
\end{equation} 
where the upward arrow denotes convergence upward.   Then \eqref{odds tight}
holds, so the conclusion \eqref{odds conclusion} holds. 
\end{lemma}
\begin{proof}
Using \eqref{odds monotone}, for any $k \in  S$
$$ \frac{ \sum_{j>k} q_j(\t) }{  \sum_{j \le k}  q_j(\t) }    \le       \frac{ \sum_{j>k} q_j }{  \sum_{j \le k}  q_j }.
 $$
 Given  $\varepsilon>0$,  pick $k \in S$ so that the right side above is less than $\varepsilon$.  This implies that 
 for every $\t$,  the left side is also less than $\varepsilon$, which
implies the tightness hypothesis \eqref{odds tight}.
\end{proof}

 \begin{theorem}\label{thm 7.7}   Let $\BZ$ be a subcritical or critical Galton Watson process, so that the offspring distribution $L$ has $m := \e L \in (0,1]$.   Then the size biased process, $\BZ^*$, is equal in distribution to $\BZ$ conditional on %the zero probability event of 
 survival forever;  equivalently,  as $n \to \infty$, $(\BZ | Z_n > 0) \to^d \BZ^*$.
 %  should we say this with the law symbol,  \cL   ?
\end{theorem}
\begin{proof}
The core of the proof is  the asymptotic relation,  for fixed $k$,   $(1-(1-\delta)^k) \sim k \, \delta$ as $\delta \to 0$.

Fix a time $n>0$ and a  value $i>0$ with $\p(Z_{n-1}=i)>0$.    We use Lemma \ref{lemma markov} with the Galton Watson process
serving as the Markov process whose transition matrix is $M$;  
row $i$ of $M$ gives the distribution of $Z_n$ conditional on $Z_{n-1}=i$, and size biasing leads to row $i$ of $N$ 
giving the distribution of $Z_n^*$ conditional on $Z_{n-1}^*=i$.   For use in Lemma \ref{odds lemma 2}, we take  $q$ to be the distribution on $\bbbn$ given by row $i$ of $N$, as specified by \eqref{N from M},  and we take $q(\t)$ to be the distribution of $Z_n$ conditional on ($Z_{n-1}=i$ and $Z_{n+\t} >0) $.
Writing $\delta(\t) := \p(Z_\t >0)$, the probability of survival for an
additional $\t$ units of time, starting from a population of size 1, we
have, with \emph{proportional to} denoted by $\propto$,
\begin{eqnarray}
q_k(\t) &:= &   \p(Z_n=k | Z_{n-1}=i,Z_{n+\t} >0) \nonumber  \\
&=& M_{ik} \, \frac{1 -(1-\delta(\t))^k}{1 -(1-\delta(\t+1))^i} \label{fred handle} \\
 & \propto & M_{ik} \ (1 -(1-\delta(\t))^k).  \nonumber
\end{eqnarray}
Using the hypothesis that the GW process is subcritical or critical, $\delta(\t) \searrow 0$ as $\t \to \infty$.  
This easily implies the hypothesis \eqref{odds monotone} for Lemma \ref{odds lemma 2}.  So $(\BZ|A)$
is a Markov process, whose transition matrix is $N$, and this process, called $\BY$ in 
Lemma \ref{lemma markov},
is equal in distribution to $\BZ^*$, using both  the martingale and Markov properties of GW, but not 
the full structure of GW, to enable %fulfill the hypotheses of 
Lemma \ref{lemma markov}.
\end{proof}

Next consider a subcritcal or critical Galton Watson tree, conditional
on survival forever.  Thanks to \eqref{bias tree}, combined with
Theorem \ref{thm 7.7}, it is ``obvious'' that the conditioned tree is
the size biased tree.  %But of course there are technicalities, and for a proof, we recommend \cite{AbrahamDelmas}.
A proof can be found in \cite{AbrahamDelmas}, and we now present a more direct proof.

 \begin{theorem}\label{thm 7.8}   Consider the tree $T$ for a subcritical or critical Galton Watson process, 
 so that the offspring distribution $L$ has $m := \e L \in (0,1]$.   
% Then the size biased process, $\BZ^*$, is equal in distribution to $\BZ$ conditional on %the zero probability event of 
% survival forever;  equivalently,  as $n \to \infty$, $(\BZ | Z_n > 0) \to^d \BZ^*$.
 %  should we say this with the law symbol,  \cL   ?
 The size biased tree $T^*$, as specified by \eqref{bias tree}, is equal in distribution to $T$ conditional on %the zero probability event of 
 survival forever;  equivalently,  as $n \to \infty$, $(T | Z_n > 0) \to^d T^*$.  
\end{theorem}
\begin{proof}
Fix $i$, consider the associated $q$ and $q(\t)$ from the proof of Theorem \ref{thm 7.7}, 
and  recall the notation $\delta(\t)$ for $\p(Z_\t >0)$, the probability of survival for an
additional $\t$ units of time, starting from a population of size 1. 
By formula \eqref{odds conclusion} and Lemma \ref{odds lemma 2} 
we know that $q_k(\t) \to q_k$ as $\t \to \infty$, for  $k$ in the support of $q$.  %each k.
Therefore, using \eqref{fred handle},  %the first two lines of display near top of p. 34,  
we see that
$$
M_{i,k} \ \frac{k  \delta(\t)}{i  \delta(\t+1)} \to N_{i,j} = M_{i,k} \  \frac{k}{i \, m}
$$
%where $m \le 1$ is the mean number of children per particle.
Thus we must have $\delta(\t)/\delta(\t+1) \to 1/m$ as $\t \to \infty$,
and hence 
\begin{equation}\label{good fred}
\text{ for fixed } n, \ \delta(\t)/\delta(\t+n)  \to  1/m^n  \text{ as }  \t \to \infty. 
\end{equation}

Now for fixed $n >0$ and $t\in \cT$,  with $k:= z_n(t)$, using % the asymptotic relation 
\eqref{good fred},
\begin{eqnarray*}
   \p(T \in [t]_n | Z_{n+\t}>0) & = &   \p(T \in [t]_n ) \ \frac{1 -(1-\delta(\t))^k}{\delta(n+\t)} \\
   & \to  &   \p(T \in [t]_n ) \ \frac{ k}{m^n},
\end{eqnarray*} 
 in the limit as $\t \to \infty$.
Comparison with \eqref{bias tree} completes the proof.
\end{proof}

\section{Size bias, tightness, and uniform
  integrability}\label{sect UI}

Recall that a collection of random variables $\{Y_\alpha: \alpha \in I \}$,
where $I$ is an arbitrary index set,
 is \emph{tight} iff for all $\varepsilon>0$ there exists $L<\infty$ such that
$$
\p(Y_\alpha \not \in [-L,L])  < \varepsilon \quad \mbox{for all $\alpha \in I$.}
$$
This definition looks quite similar to the definition of uniform integrability, where we say
$\{X_\alpha: \alpha \in I \}$ is \emph{uniformly integrable}, or UI, iff for all $\delta>0$ there exists $L<\infty$ such that $$
\e (|X_\alpha| ; X_\alpha \notin [-L,L])  < \delta \quad \mbox{for all $\alpha \in I$.}
$$
Intuitively, tightness for a family is that uniformly over the family, the probability mass due to large values
is arbitrarily small.  Similarly, uniform integrability is the condition that, uniformly over the family, the contribution to the expectation due to large values is arbitrarily small.
Since \emph{size bias} relates contribution to the expectation to probability mass, %there should be a connection between tightness, size bias, and UI. 
it should be possible to %state 
use size bias to express
a relation between uniform integrability and tightness.

\ignore{   THIS WAS BACKGROUND, for an audience of beginners, and now looks inappropriate --- RICHARD
Tightness of the family of random variables $\{Y_\alpha: \alpha \in I \}$ implies that every sequence of variables $Y_n$,  $n=1,2, \ldots$ from the family has a subsequence that converges in distribution.  Uniform integrability of the family  $\{X_\alpha: \alpha \in I\}$ implies that for  any sequence of variables with a distributional limit, say $X_{\alpha_k} \Rightarrow X$,  has additonally $\e X_{\alpha_k} \rightarrow \e X$.
}

We %will 
show,
in Theorems  \ref{thm:tui1} and \ref{thm:tui2}, 
that for random variables, i.e., real valued random \emph{elements}, there is an intimate connection between tightness and uniform integrability,  and that this connection is made via size bias.  But we must note, the concept of tightness is much broader than the concept of uniform integrability, in that tightness applies to random elements of metric and topological spaces, whereas uniform integrability is inherently a real valued notion. In more general spaces, to define tightness, the closed intervals $[-L,L]$ are replaced by arbitrary
 \emph{compact sets}, and the discussion below relates 
 only to
 %to such spaces only for 
 metric spaces with the property that balls $\{x:  d(x,y) \le L \}$ are compact.

To discuss the connection between size biasing and uniform integrability, it is useful to restate the basic definitions in terms of nonnegative random variables. It is clear from the definition of tightness above that a family of \emph{nonnegative} random variables $\{Y_\alpha: \alpha \in I \}$ is tight iff for all $\varepsilon>0$ there exists $L<\infty$ such that
\begin{equation}\label{def tight}
\p(Y_\alpha > L)  < \varepsilon  \quad \mbox{for all $\alpha \in I$,}
\end{equation}
and from the definition of UI, that
a family of \emph{nonnegative} random variables  $\{X_\alpha: \alpha \in I \}$ is uniformly integrable iff for all $\delta>0$ there exists $L<\infty$ such that
\begin{equation}\label{def UI}
\e(X_\alpha ; X_\alpha > L)  < \delta \quad \mbox{for all $\alpha \in I$.}
\end{equation}
For general random variables, the family  $\{G_\alpha: \alpha \in I \}$ is tight [respectively UI]
iff $\{|G_\alpha|: \alpha \in I \}$ is tight [respectively UI].
Hence we specialize in the remainder of this section to random variables that are non-negative. % with finite, strictly positive  mean.
%[ HAVE WE extended the definition of size bias  to say that, if $X=0$ with probability 1,  then it can be size biased?  If not, then the statements of the first theorem should be modified to exclude the zero random variable, or the proof should have a separate case for this.]
% Theorems already exclude the zero random variable.

\ignore{Since \emph{size bias} relates contribution to the expectation to probability mass, %there should be a connection between tightness, size bias, and UI. 
it should be possible to state a relation between uniform integrability and tightness.
 However, care 
 }
 
 Care must be taken to distinguish between the \emph{additive} contribution to expectation, and the \emph{relative} contribution to expectation. The following example makes this distinction clear. Let
$$
\p(X_n= n)=1/n^2, \p(X_n = 0) = 1 - 1/n^2, \quad n=1,2,\ldots.
$$
Here, $\e X_n = 1/n$, the family $\{X_n\}$ is uniformly integrable, but $1=\p(X_n^*=n)$, so the family
$\{X_n^*\}$ is not tight;  %  . %The trouble is that     In this example,
the additive contribution to the expectation from large values of $X_n$ is small, but the \emph{relative} contribution is large --- one hundred percent!
The following two theorems, which exclude this phenomenon, show that tightness and uniform integrability are very closely related.

\begin{theorem}
\label{thm:tui1}
Assume that for $\alpha \in I$, where $I$ is an arbitrary index set, the random variables $X_\alpha$ satisfy
$X_\alpha \ge 0$ and $0 < \e X_\alpha < \infty$,  and let $Y_\alpha =^d X_\alpha^*$.
Then  
$$
\{ X_\alpha: \alpha \in I \} \mbox{ is UI \  if } \  \{ Y_\alpha: \alpha \in I \} \mbox{ is tight}.
$$
Assume further that the values $\e X_\alpha$ are uniformly bounded away from 0, say $c > 0$ and $ \forall \alpha,
c \le \e X_\alpha$.  Then
$$
\{ X_\alpha: \alpha \in I \} \mbox{ is UI \ iff } \  \{ Y_\alpha: \alpha \in I \} \mbox{ is tight}.
$$
\end{theorem}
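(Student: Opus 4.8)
The plan is to reduce the entire statement to the exact tail identity
\begin{equation}\label{tui identity}
\e(X_\alpha ; X_\alpha > L) = a_\alpha \, \p(Y_\alpha > L), \qquad a_\alpha := \e X_\alpha,
\end{equation}
valid for every $\alpha \in I$ and every $L$; this is just \eqref{lies above} (equivalently \eqref{sizebias} applied to the bounded measurable function $g = \bbbone(\cdot > L)$), rewritten using $X_\alpha^* =^d Y_\alpha$. In \eqref{tui identity} the left-hand side is precisely the quantity appearing in the definition \eqref{def UI} of uniform integrability, while the right-hand side is, up to the factor $a_\alpha$, the quantity appearing in the definition \eqref{def tight} of tightness; so the whole theorem comes down to controlling that factor, from above for one direction and from below for the other.

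Before either implication I would record the auxiliary observation that tightness of $\{Y_\alpha\}$ \emph{by itself} forces the means to be uniformly bounded above. Using \eqref{def tight}, pick $L_0 < \infty$ with $\p(Y_\alpha > L_0) \le 1/2$ for all $\alpha$, and split $a_\alpha = \e(X_\alpha ; X_\alpha \le L_0) + \e(X_\alpha ; X_\alpha > L_0) \le L_0 + a_\alpha \p(Y_\alpha > L_0) \le L_0 + \tfrac12 a_\alpha$, so that $a_\alpha \le 2 L_0 =: M$ for every $\alpha$. This little inequality is the one genuine idea in the argument, and hence the only place I expect even a mild obstacle: one has to think to bound the mean first, and to notice that no extra hypothesis is needed to do so.

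For the ``if'' direction I would fix $\delta > 0$ and, by tightness, choose $L$ with $\p(Y_\alpha > L) < \delta / M$ for all $\alpha$; then \eqref{tui identity} gives $\e(X_\alpha ; X_\alpha > L) = a_\alpha \p(Y_\alpha > L) < M \cdot (\delta/M) = \delta$ uniformly in $\alpha$, which is exactly \eqref{def UI}, so $\{X_\alpha\}$ is UI. For the remaining implication (the ``only if'', under the extra hypothesis $c \le a_\alpha$ with $c > 0$), I would run \eqref{tui identity} the other way: $\p(Y_\alpha > L) = \e(X_\alpha ; X_\alpha > L)/a_\alpha \le \e(X_\alpha ; X_\alpha > L)/c$; given $\varepsilon > 0$, uniform integrability of $\{X_\alpha\}$ supplies an $L$ with $\e(X_\alpha ; X_\alpha > L) < c\varepsilon$ for all $\alpha$, hence $\p(Y_\alpha > L) < \varepsilon$, i.e.\ \eqref{def tight} holds and $\{Y_\alpha\}$ is tight.

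Finally I would note that the lower bound $c \le a_\alpha$ is genuinely necessary, and that the asymmetry of the two directions is already explained by \eqref{tui identity}: the example displayed just before the theorem, $\p(X_n = n) = 1/n^2$, $\p(X_n = 0) = 1 - 1/n^2$, has $a_n = 1/n \to 0$, the family $\{X_n\}$ UI, yet $\{Y_n\}$ not tight since $\p(Y_n = n) = 1$. Thus the factor $a_\alpha$ is harmless from above for free (the second paragraph), but harmless from below only when the means are kept away from $0$.
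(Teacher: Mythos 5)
Your proposal is correct and follows essentially the same route as the paper: the identity $\e(X_\alpha; X_\alpha > L) = \e X_\alpha \cdot \p(Y_\alpha > L)$, the preliminary bound $\e X_\alpha \le 2L_0$ extracted from tightness with $\varepsilon = 1/2$, and the two directions obtained by bounding the factor $\e X_\alpha$ from above and from below respectively. No gaps.
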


\begin{proof} Since  $Y_\alpha =^d X_\alpha^*$, by \eqref{sizebias}, for every $L$ we have  $\p(Y_\alpha > L ) = \e ( 1(Y_\alpha > L) ) =
\e (X_\alpha 1(X_\alpha >L)) / \e X_\alpha$,  so %for any $L$ and $\alpha \in I$,
$$
\e(X_\alpha ;X_\alpha >L) = \e X_\alpha \ \p(Y_\alpha > L ).
$$

First, we show that tightness implies UI.  Assume that $\{ Y_\alpha: \alpha \in I \}$ is tight, and take  $L_0>0$ to satisfy \eqref{def tight} with $\varepsilon = 1/2 $, so that $\p(Y_\alpha > L_0 ) <1/2$ for all $\alpha \in I$.  Hence, for all $\alpha \in I$,
$$
\e(X_\alpha ;X_\alpha >L_0) = \e X_\alpha  \p(Y_\alpha > L_0 ) <  \e X_\alpha / 2,
$$
and therefore,
\beas
L_0 \ge  \e(X_\alpha ;X_\alpha  \le L_0) &=& \e X_\alpha - \e(X_\alpha ;X_\alpha >L_0)\\
&>& \e X_\alpha - \e X_\alpha / 2 = \e X_\alpha / 2,
\enas
and hence $\e X_\alpha < 2 L_0$.
Now given $\delta >0$ let $L$ satisfy \eqref{def tight} for $\varepsilon= \delta/(2 L_0)$.
Hence $\forall \alpha \in I$,
$$
  \e(X_\alpha ;X_\alpha >L) = \e X_\alpha \ \p(Y_\alpha > L ) < 2 L_0 \  \p(Y_\alpha > L ) < 2 L_0 \ \varepsilon = \delta,
$$
establishing \eqref{def UI}.

Second we show that UI implies tightness, in the presence of means bounded uniformly away from zero . Assume that $\{ X_\alpha: \alpha \in I \}$ is UI, and let $\varepsilon >0 $ be given to test tightness in \eqref{def tight}. Let $L$ be such that \eqref{def UI} is satisfied with $\delta = \varepsilon c$.  Now, using
$\e X_\alpha \ge c$, for every $\alpha \in I$,
$$
 \p(Y_\alpha > L ) = \e(X_\alpha ;X_\alpha >L) / \e X_\alpha \le \e(X_\alpha ;X_\alpha >L) /c < \delta /c =\varepsilon,
$$
establishing \eqref{def tight}.
\end{proof}
As an alternate to Theorem \ref{thm:tui1}, for the sake of having cleaner hypotheses and a cleaner conclusion, we also give the following theorem.  Note below that the $X_\alpha$ to be involved in size bias are allowed to have $\e X_\alpha = 0$  --- it is not a typo --- because we will be taking $(X_\alpha +c)^*$ for some $c>0$.

\begin{theorem}
\label{thm:tui2}
Assume that for $\alpha \in I$, where $I$ is an arbitrary index set, the random variables $X_\alpha$ satisfy
$X_\alpha \ge 0$ and $\e X_\alpha < \infty$.  Pick any $c \in (0,\infty)$, and for each $\alpha$ let $Y_\alpha = (c+X_\alpha)^*$.
Then
$$
\{ X_\alpha: \alpha \in I \} \mbox{ is UI \ iff } \  \{ Y_\alpha: \alpha \in I \} \mbox{ is tight}.
$$
\end{theorem}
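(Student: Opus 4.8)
The plan is to deduce Theorem \ref{thm:tui2} from Theorem \ref{thm:tui1} via the elementary observation that shifting a family of nonnegative random variables by a fixed positive constant affects neither uniform integrability nor the hypotheses needed to apply Theorem \ref{thm:tui1}.

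First I would set $W_\alpha := c + X_\alpha$. Since $X_\alpha \ge 0$ and $\e X_\alpha < \infty$, we have $W_\alpha \ge c > 0$ and $\e W_\alpha = c + \e X_\alpha \in [c,\infty)$; in particular the means $\e W_\alpha$ are finite and uniformly bounded away from $0$ by $c$, so $Y_\alpha = W_\alpha^*$ is well-defined and the ``iff'' half of Theorem \ref{thm:tui1} applies to $\{W_\alpha\}$, giving: $\{W_\alpha : \alpha \in I\}$ is UI iff $\{Y_\alpha : \alpha \in I\} = \{W_\alpha^* : \alpha \in I\}$ is tight. (This is precisely why $Y_\alpha$ is defined using $(c+X_\alpha)^*$ rather than $X_\alpha^*$, and it is also what lets the hypothesis $\e X_\alpha = 0$ be permitted.)

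The only remaining point is the claim that $\{X_\alpha : \alpha \in I\}$ is UI iff $\{W_\alpha : \alpha \in I\}$ is UI. One direction is immediate: if $\{W_\alpha\}$ is UI, then since $X_\alpha > L$ implies $W_\alpha = c + X_\alpha > L$ we have $\{X_\alpha > L\} \subseteq \{W_\alpha > L\}$, and combined with $0 \le X_\alpha \le W_\alpha$ this gives $\e(X_\alpha ; X_\alpha > L) \le \e(W_\alpha ; W_\alpha > L)$, so \eqref{def UI} for $\{W_\alpha\}$ forces \eqref{def UI} for $\{X_\alpha\}$. For the converse, assume $\{X_\alpha\}$ is UI; taking $\delta = 1$ in \eqref{def UI} and splitting $\e X_\alpha = \e(X_\alpha; X_\alpha \le L_*) + \e(X_\alpha; X_\alpha > L_*)$ shows $M := \sup_\alpha \e X_\alpha < \infty$. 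Given $\delta > 0$, choose $L_1$ with $\e(X_\alpha ; X_\alpha > L_1) < \delta/2$ for all $\alpha$ and set $L := \max(L_1 + c,\ c + 2cM/\delta)$. Then for every $L' \ge L$, using $\{W_\alpha > L'\} = \{X_\alpha > L' - c\}$ and Markov's inequality $\p(X_\alpha > L'-c) \le M/(L'-c)$,
\[
\e(W_\alpha ; W_\alpha > L') = c\,\p(X_\alpha > L' - c) + \e(X_\alpha ; X_\alpha > L'-c) \le \frac{cM}{L'-c} + \frac{\delta}{2} \le \delta ,
\]
which establishes \eqref{def UI} for $\{W_\alpha\}$. (Alternatively, this equivalence is immediate from the de la Vall\'ee-Poussin criterion: a convex $\phi$ with $\phi(t)/t \to \infty$ witnessing UI of $\{X_\alpha\}$ also witnesses UI of $\{c+X_\alpha\}$.)

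Chaining the two equivalences, $\{X_\alpha\}$ UI $\iff$ $\{W_\alpha\}$ UI $\iff$ $\{Y_\alpha\}$ tight, completes the proof. I do not expect a genuine obstacle here; the only thing requiring care is arranging the constant shift so that the ``means bounded away from zero'' hypothesis of the iff half of Theorem \ref{thm:tui1} is satisfied, and then checking the routine fact that adding a positive constant preserves uniform integrability of nonnegative families.
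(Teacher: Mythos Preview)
Your proof is correct and follows exactly the same route as the paper: apply Theorem \ref{thm:tui1} to $W_\alpha = c + X_\alpha$ (whose means are bounded below by $c$), and then use the elementary fact that adding a positive constant preserves uniform integrability. The paper simply asserts the latter fact as ``easy to verify,'' whereas you supply the details.
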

\begin{proof}  By Theorem \ref{thm:tui1}, the family $\{c+X_\alpha\}$ is UI iff the family $\{(c+X_\alpha)^*\}$ is tight.
As it is easy to verify that the family $\{X_\alpha\}$ is tight [respectively UI] iff the family $\{c+X_\alpha\}$ is tight [respectively UI], Theorem \ref{thm:tui2} follows directly from Theorem \ref{thm:tui1}.
\end{proof}

\section{Size bias, the lognormal, and Chihara--Leipnik}\label{sect lognormal}

In this section we review a construction due to Chihara in 1970,
\cite{chihara}, and  Leipnik in 1979, \cite{leipnik,leipnik91}, 
of a family of discrete distributions having the same moment sequence as the lognormal.  Durrett \cite{durrett}
presents this result
%\footnote{specialized to  $\sigma=1$, $c=e$, $a = \sqrt{e}$}
with the comment
``Somewhat remarkably, there is a family of discrete random variables
with these moments.''    We hope here to show that, from the point of view of size bias, this construction is  
\emph{natural} and \emph{inevitable}, but we can only speculate that
for the original discoverers, %Leipnik, 
size bias played a role in the creative process,
perhaps via \eqref{phi'};  see \cite[page 332,  formula (16)]{leipnik91}.
As a reward for using size bias, we are able to show, in Theorem
\ref{thm ra}, that the
lognormal itself is a mixture of these discrete distributions, and
furthermore that these discrete distributions are the extreme points
of a Choquet simplex --- in this case, the set $U_c$ of solutions of
\eqref{times c}, which is a subset of the closed convex  set $V_c$
formed by all distributions
having the same moments as the lognormal $X=\exp(\sqrt{\log c}\, Z)$.   The results in this section,
linking
the lognormal distribution with size bias, appear in \cite{christiansen,pakes,pakes96}; see also \cite{lopez}.

Throughout this section, we write $Z$ for a  standard normal, with moment generating function
$M(\beta)=e^{\beta^2/2}$.  The \emph{standard lognormal} is given by
$X=e^Z$, with moments
\begin{equation}\label{lognormal moment plain}  
 \e X^n =  \e \exp(n \,  Z )=M(n) =
e^{n^2/2},
\end{equation}
for $n=0,1,2,\ldots$.  (It is  clear that \eqref{lognormal moment
  plain} holds for all $n \in (-\infty,\infty)$, but historically,
moments usually refer to the case $n=0,1,2,\ldots$.)
Similarly, for $\sigma>0$, the lognormal $X=e^{\sigma Z}$ obtained by exponentiating the normal with mean zero and variance $\sigma^2$ has moments $\e X^n =  \e \exp(n \, \sigma Z )=M(\sigma n) =
e^{n^2\sigma^2/2}$. 
Hence it is natural to define, taking $c=e^{\sigma^2} \in (1,\infty)$,
\begin{equation}\label{V}
   V_c := \{ \mu:   \mu=\cL(X) \text{ for some 
  %   random variable 
     } X \ge 0,\text{ with } \e X^n = c^{n^2/2}, n \ge 0
    \}.
\end{equation}

  The famous fact that $V_c$ is not a singleton set, i.e., that the lognormal distribution is not determined by
its moments, is from  Stieltjes
in  1894 
\cite[Section 56, page J. 106]{stieltjes1894}, reprinted in
\cite{stieltjes}. The family of examples in  \eqref{heyde both} is
also from Stieltjes \cite{stieltjes1894}, 
% it has sometimes been attributed to Heyde 1963 \cite{heyde}, who
% rediscovered it, see, e.g., \cite{durrett,feller2}.  
although probabilists,   e.g., \cite{durrett,feller2}, attribute 
it %the family
to Heyde, who rediscovered it in 1963 \cite{heyde}.
These alternate probability distributions having the same
moments as the lognormal are continuous, with density presented via a %``1-periodic"
perturbation of the lognormal density, as follows.   We will write $f_{0,\sigma^2}$ for the density of the lognormal $X=e^{\sigma Z}$:
\begin{equation}\label{lognormal density}
     f_{0,\sigma^2}(x) = \frac{1}{x \sigma \sqrt{2 \pi}} \exp(-(\log x)^2/(2\sigma^2)),   \ x \in
     (0,\infty).
\end{equation}
For positive integers $m$ and real $\delta \in [-1,1]$ define
\begin{equation}\label{heyde}
     g_{m,\delta}(x) = 
   %  \left( 1 + \delta \sin( 2 \pi m(\log x)/\sigma^2) \right),  
       1 + \delta \sin( 2 \pi m \, \log x /\sigma^2) , 
      \ x \in    (0,\infty),
\end{equation}
so in case $\delta=0$, one has $g_{m,\delta}(x)=1$ for all $x$.
Let $h_{m,\delta}$ be given by
\begin{equation}\label{heyde both}
     h_{m,\delta}(x) = f_{0,\sigma^2}(x) \times g_{m,\delta}(x),
       \ x \in
     (0,\infty).
\end{equation}
One then checks that for integers $n$, $\int x^n h_{m,\delta}(x) \ dx = \int x^n f_{0,\sigma^2}(x) \ dx =
e^{n^2\sigma^2/2}$.
 In particular, the case $n=0$ shows that $h_{n,\delta}$, clearly a non-negative function, is a density.

%Here, setting $\delta=0$ yields the density of the standard lognormal,
%and it is an easy exercise to check that for $n \in \BZ$, the value of
%$\int_0^\infty x^n f_{0,1,\delta}(x) \ dx$ does not vary with
%$\delta$.

Let $X = e^{Z}$, and consider its size biased
version, $X^*$.  By \eqref{moment shift} and \eqref{lognormal moment plain}, for integers $n$,
\begin{equation}\label{lognormal moment}
     \e (X^*)^n = \frac{\e X^{n+1}}{\e X} = \frac{M(n+1)}{M(1)} = 
     e^n M(n) = e^n \e X^n = \e (eX)^n.
\end{equation}
Of course, since the lognormal distribution is \emph{not}
characterized by its moments, this only \emph{suggests}, and does
\emph{not prove}, that $X^* =^d e X$.
Similarly, 
%the general lognormal is given by $X=\exp(\sigma Z + \mu)$, 
%with moments 
%$\e X^n = % \e \exp(\sigma\, n \,  Z + n \, \mu)=
%(e^\mu)^n    e^{\sigma^2n^2/2}$, 
the general lognormal and its moments are given by
\begin{equation}\label{lognormal moments general}
X=\exp(\sigma Z + \mu), \ \e X^n = %  moved upstairs  (e^\mu)^n   
 e^{\mu n + \sigma^2n^2/2}
\end{equation}
and calculation of the moments of
$X^*$ \emph{suggests} that for   $X=\exp(\sigma Z + \mu)$ we have $X^* =^d
e^{\sigma^2} X$.  Simple computation with the
density  and \eqref{sizebias-f} shows that indeed, 
\begin{equation}\label{lognormal property}
X=\exp(\sigma Z + \mu) \text{ has } X^* =^d c X, \text{ with }
c=\exp(\sigma^2).
\end{equation}  
We leave the proof of \eqref{lognormal property} as an exercise for the reader, with our solution given by 
this\footnote{The density of $\exp(\mu +\sigma Z )$ is 
$f_{\mu,\sigma^2}(x) = 1/( x \sqrt{2 \pi} \sigma)
\exp(-(\log x - \mu)^2/(2 \sigma^2))$.
Expanding the square in the exponent, and keeping track only of factors that  vary with $x$, we have 
$f_{\mu,\sigma^2}(x) \propto (1/x) x^{-(\log x)/(2 \sigma^2)} x^{\mu/ \sigma^2}$. Hence for any real $\beta$, 
$ f_{\mu+\beta \sigma^2,\sigma^2}(x) \propto x^\beta f_{\mu,\sigma^2}(x)$.  The case $\beta=1$ shows that 
  $x f_{\mu,\sigma^2}(x)$ is proportional to $f_{\mu+\sigma^2,\sigma^2}(x)$, hence by \eqref{sizebias-f}, $(\exp(\mu +\sigma Z))^* =^d \exp((\mu+\sigma^2)+ \sigma Z)$.
}
 footnote.

As regards the distributional family, %sequence of moments, 
varying $\mu$ corresponds to 
scaling $X$, % by the  constant $y=e^\mu$, 
and $X \mapsto yX$ %this 
is a trivial transformation, 
%with $\e (yX)^n =y^n \, \e X^n$, 
so it makes sense to
study only the case $\mu=0$. But varying  $\sigma$
is nontrivial; it corresponds to taking ordinary powers, $X \mapsto
(X)^\sigma$.  
So, we fix $\mu=0$ and let  $\sigma>0$ be arbitrary.  
We
will write $c=\exp(\sigma^2)>1$;  alongside our standard notation, $a
= \e X$, for $X=e^{\sigma Z}$ we have $a = \e X = \sqrt{c}$.

For the remainder of this section, for  $c \in (1,\infty)$, we
investigate random variables satisfying
\begin{equation}\label{times c}
   X \ge 0,   \ \ \ \  \e X = \sqrt{c}, \ \ \ \ \ X^* =^d c X,
\end{equation} 
along with the corresponding set of probability distributions,
\begin{equation}\label{U}
   U_c := \{ \mu:   \mu=\cL(X), \text{ for some 
     random variable X, satisfying \eqref{times c}} \} .
\end{equation}
With this notation,  \eqref{lognormal moments general} and \eqref{lognormal property} assert that  $X=\exp(\sqrt{\log c}\, Z)$ 
satisfies \eqref{times c}, and its lognormal distribution is an
element of $U_c$.

%As a preliminary matter, 
As the first step in our investigation of \eqref{times c},
inspired by Feynman's maxim,\footnote{Feynman Lectures on Physics, Vol.~2,
   Chapter 12.1 and oft again, ``the same equations have the same
   solutions.''} we note that our considerations lead twice to a  homogenous system of equations, of the form
\begin{equation}
\label{simple 1}
\forall   n \in \BZ, s_{n+1}= a c^n s_n,   \quad \mbox{which has solution} \quad s_n=s_0 a^n c^{n(n-1)/2}.
\end{equation}

For the first instance of \eqref{simple 1},  write 
 $m_n := \e X^n$, with $m_1= \e X =a$, so the moment shift relation \eqref{moment shift} can be written
as $\e (X^*)^n = m_{n+1}/a$. Using \eqref{times c}, we have  $\e (X^*)^n= \e (cX)^n = c^n
 \, m_n$, hence %$  c^n \, m_n = m_{n+1}/a $, so that 
\begin{equation}\label{recursion moment}
   m_{n+1} = ac^n \, m_n.
\end{equation}
Combining $m_0=1$ with the solution to \eqref{simple 1}, we have
 \begin{equation}\label{solution moment}
    m_n =  a^n c^{n(n-1)/2} =  c^{n^2/2} \text{ (using } a=\sqrt{c}),
\end{equation}
 for all $n \in \BZ$.  In summary, so far we have shown that $U_c \subset
 V_c$, i.e.,  any solution of \eqref{times c} has the same moments as the lognormal $e^{\sigma Z}$.

For the second instance of \eqref{simple 1}, if $X$ satisfying \eqref{times c} has any pointmass at some
$b>0$, then it must have pointmass at every point $b \, c^n$ for $n
\in \BZ$.  %Without loss of generality, thanks to \eqref{scaling}, we
%work with $y=1$, and
With  the benefit of hindsight\footnote{Defining for example $s_n = \p(X=bc^{n})$ or  $s_n = 1/\p(X=bc^{n})$ or $s_n = \p(X=bc^{-n})$ does not lead directly to \eqref{simple 1}  --- the reader \emph{might} enjoy  trying these.}  we go doubly negative, and for $n \in \BZ$ 
define $p_n$ and $r_n$ by $r_n = 1/p_n = 1/\p(X=bc^{-n})$.  We 
have $p_{n+1}= \p(X=bc^{-n-1}) = \p(cX=bc^{-n}) = \p(X^*=bc^{-n}) =
(bc^{-n}/a)\p(X=bc^{-n}) =
 (bc^{-n}/a) p_{n}$, so that  
 \begin{equation}\label{recursion mass}
   r_{n+1} = (a/b)c^n \, r_n.
\end{equation}
This is  \eqref{simple 1} 
 with $r_n$ in the role of $s_n$ and $a/b$ in the role of $a$, so
quoting the solution, %in \eqref{simple 1} with $a/b$ in the role of $a$, 
%and adjusting for the reciprocal relation $r_n = 1/p_n$,  
and using $a=\sqrt{c}$, we get
$
    p_0/p_n = r_n/r_0 = (a/b)^n c^{n(n-1)/2} = b^{-n} c^{n^2/2}.
$
Finally, replacing $n$ by $-n$ %undoing the negative
 in  $p_n = \p(X= bc^{-n})$,
we have, for $n \in \BZ$,
\begin{equation}\label{solution mass 0}
    \p(X=bc^n) =  b^{-n} c^{-n^2/2} \    \p(X=b).   %\p(X=bc^0).
\end{equation}
   
With some fixed $c>1$ in mind, for any $b \in (0,\infty)$ we call 
the set $$\{ \ldots,b/c^2,b/c,b,bc,bc^2,bc^3,\ldots \}$$ the   
``orbit of $b$,''  for short, or to say it fully, the orbit of $b$ modulo multiplication by powers of $c$.
The language here  comes from the theory of a group acting on a set;  orbits are equivalence classes, and $(0,\infty)$ is a disjoint union of orbits.  For a set containing exactly one representative for each orbit, the natural choice is $[1,c)$.

If we want $X$ supported on a single orbit, that is, with $1=\sum_{n \in
  \BZ} p_n$, then we need
\begin{equation}\label{solution mass}  %\label{not theta
  \p(X=bc^n) =  b^{-n} c^{-n^2/2}/t(b,c)  , \text{ where } t(b,c) := \sum_{m\in \BZ} b^{-m} c^{-m^2/2}.
\end{equation}
The function $t$ is essentially the Jacobi {theta} function; the convergence of the series, for any $c>1$, is obvious.

However, the calculation connecting \eqref{solution mass 0} with
\eqref{times c} was done \emph{assuming} that $\e X  = \sqrt{c}$, and we will only have succeeded, in getting a random variable
with $X^* =^d cX$ and supported on a single orbit, if, and only if, it turns out that, under the mass function \eqref{solution mass},
one has $\e X = \sqrt{c}$. (It is trivial to check that if
\eqref{solution mass 0} \emph{and} $\p(X \in \{
\ldots,b/c^2,b/c,b,bc,bc^2,bc^3,\ldots \})=1$ \emph{and} $\e X =
\sqrt{c}$, then \eqref{times c} is true.)  % AND WE HAVE ONE AUTHOR
                                % WHO really really wanted this
                                % triviality stated explicitly
 So crossing our fingers we calculate, from \eqref{solution mass},
$$
\e X / \sqrt{c} = \sum_{n \in \BZ} b c^n \  b^{-n} c^{-n^2/2} \
  c^{-1/2} / \, t(b,c) = 1,
$$
with the change of variables $m=n-1$ justifying the final equality.

%  To summarize, the above discussion showed how size bias makes it relatively straightforward to (re)discover and prove the following theorem,  which is entirely Leipnik's, apart from the use of \eqref{times c}.
 
The above discussion shows how the use of size bias, particularly
\eqref{times c}, %(33), 
makes it relatively straightforward to rediscover and prove the
following theorem of Chihara and  Leipnik:
 
 \begin{theorem}[Chihara -- Leipnik]\label{thm roy}
 For any $\sigma >0$, with $c := \exp(\sigma^2)$, and for any $b \in (0,\infty)$, there is a distribution $\ell(b,c)$ for a discrete random variable $X_{b,c}$, whose support is the single orbit  $\{ \ldots,b/c^2,b/c,b,bc,bc^2,bc^3,\ldots \}$, with probability mass function given by \eqref{solution mass}.
 This random variable satisfies \eqref{times c}, which implies that 
 for $n \in \BZ$, $\e X_{b,c}^n  = \exp(n^2 \sigma^2/2)$, so taking $n \ge 0$ in particular,  the discrete random variable $X_{b,c}$ 
 has the same moments as the lognormal  $ \exp(\sigma Z)$, where $Z$ is standard normal.
 \end{theorem}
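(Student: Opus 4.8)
The plan is to assemble, in logical order, the computations already carried out in the discussion preceding the theorem, and to add the two small analytic points that were passed over quickly: absolute convergence of the defining series, and finiteness of all the moments (so that the moment recursion is genuine rather than merely formal).

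First I would take \eqref{solution mass} as the \emph{definition} of $X_{b,c}$, i.e.\ declare $\p(X_{b,c}=bc^n)=b^{-n}c^{-n^2/2}/t(b,c)$ for $n\in\BZ$, with $t(b,c):=\sum_{m\in\BZ}b^{-m}c^{-m^2/2}$, and check that this is a bona fide probability mass function. Nonnegativity is immediate, and for summability I would write the general term as $b^{-m}c^{-m^2/2}=\exp(-m\log b-(m^2/2)\log c)$ and note that, since $\log c>0$, the quadratic term dominates, so the two-sided series converges absolutely (it is, up to relabelling, a value of a Jacobi theta function, as remarked after \eqref{solution mass}). Hence $t(b,c)\in(0,\infty)$, the masses sum to $1$, and since every mass is strictly positive the support is exactly the orbit $\{\ldots,b/c^2,b/c,b,bc,bc^2,\ldots\}$.

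Next I would verify the three requirements in \eqref{times c}. Nonnegativity is clear. For the mean, I would carry out the ``crossing the fingers'' computation using the exponent identity $bc^n\cdot b^{-n}c^{-n^2/2}=c^{1/2}\,b^{-(n-1)}c^{-(n-1)^2/2}$ (both sides equal $b^{1-n}c^{\,n-n^2/2}$): the change of variable $m=n-1$, legitimate by the absolute convergence just established, gives $\e X_{b,c}=c^{1/2}\sum_{m\in\BZ}b^{-m}c^{-m^2/2}/t(b,c)=\sqrt{c}$. For the size-bias identity $X_{b,c}^*=^d cX_{b,c}$, I would apply \eqref{sizebias-f}: $\p(X_{b,c}^*=bc^n)=(bc^n/\sqrt{c})\,\p(X_{b,c}=bc^n)$, and the same exponent bookkeeping shows this equals $\p(X_{b,c}=bc^{n-1})=\p(cX_{b,c}=bc^n)$; as both $X_{b,c}^*$ and $cX_{b,c}$ are supported on the orbit, equality of mass functions yields equality in distribution. (Equivalently, one may simply invoke the remark immediately before the theorem that \eqref{solution mass 0} together with full support on one orbit and $\e X=\sqrt{c}$ forces \eqref{times c}.)

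Finally, with \eqref{times c} in hand, I would first note that every moment $\e X_{b,c}^n$, $n\in\BZ$, is finite --- again because $(bc^k)^n b^{-k}c^{-k^2/2}$ is summable over $k\in\BZ$ by the super-exponential decay --- so that \eqref{moment shift} genuinely applies; then the recursion \eqref{recursion moment} with $m_0=1$ and its solution \eqref{solution moment} give $\e X_{b,c}^n=c^{n^2/2}=\exp(n^2\sigma^2/2)$ for all $n\in\BZ$. Specializing to $n=0,1,2,\ldots$ and comparing with \eqref{lognormal moment plain} (read with $\sigma$ in place of $1$) shows $X_{b,c}$ has the moment sequence of $\exp(\sigma Z)$. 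I do not expect any real obstacle here: the only things that require care are the arithmetic of the exponents of $b$ and $c$ under the shift $n\mapsto n-1$, and the justification of the rearrangements of series, both handled by the absolute convergence established at the outset.
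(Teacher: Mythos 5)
Your proposal is correct and follows essentially the same route as the paper, which does not give a separate proof block but treats the preceding discussion (the derivation of \eqref{recursion moment}, \eqref{solution mass}, and the change-of-variables check that $\e X = \sqrt{c}$) as the proof. You merely reorganize that material into a forward-running argument and fill in the two analytic points the paper passes over — absolute convergence of the theta-type series and finiteness of all moments — which is sound and appropriate.
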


Another issue is whether the lognormal can be expressed as a mixture
of these discrete distributions.   
\ignore{
with italics to show Leipnik's exact words, and ordinary text to show our paraphrase\footnote{where we also switch to our notation, $b$ to repesent the base of the orbit, and $\ell(b,c)$ for the distribution on that orbit}, 
}
Leipnik 1991, \cite[page 337]{leipnik91},
wrote\footnote{Italics to show Leipnik's exact words, and ordinary text to show our paraphrase.} ``\emph{One hopes that for some mixing distribution} $d h(b) $ \emph{we have} that the lognormal distribution for $e^{\sigma Z}$
is a mixture, governed by $h$, of the single orbit distributions, \emph{and so too}
$$
   \phi(t) = \int_0^\infty \phi_b (t)  \ dh(b).
$$   
[The display above expresses the characteristic function of the lognormal as a mixture of the characteristic functions of the distributions $\ell(b,c)$.]
\emph{Unfortunately, the necessary $d \, h(b)$ 
is somewhat complicated and hence sheds little light on the sum distribution problem.  However, the extraordinary non-uniqueness of  the lognormal moment problem is apparent.} 

\ignore{We interpret Leipnik's words ``One hopes'' to mean that he \emph{conjectured} %, but did not prove, 
that the lognormal distribution is a mixture of single orbit discrete distributions.  But the sentence ``Unfortunately $\cdots$'' suggests that maybe Leipnik also proved the conjecture, but considered the result too messy to write up.   We have proved his conjecture, as given by  Theorem \ref{thm ra} below.  From our point of view, the obstruction Leipnik encountered might have been taking mixtures indexed by $(0,\infty)$, failing  to notice that $\ell(b,c) = \ell(bc,c)$. 
}
The words ``one hopes'' signal a conjecture; the sentence beginning   ``Unfortunately $\cdots$''
suggests that he may have had a proof too messy to publish.  Whatever the case, we supply a
proof here, in the form of Theorem \ref{thm ra} below.  Conceivably, the complication encountered by Leipnik might have arisen from considering mixtures indexed by $(0,\infty)$, without
exploiting the formula $\ell(b,c) = \ell(bc,c)$ --- the proof of which we leave as an
exercise for the reader.
It is natural, and simple, to take mixtures indexed by $[1,c)$; then there is a unique choice for $h$, with one simple computation to check.
For notation, we follow Leipnik, and write $dh(b)$ to denote  a general measure $h$ to govern a mixture; so that  $h$ may be discrete, absolutely continuous, singular continuous, or a mixture of these.  In the special case in
Theorem \ref{thm ra} given by \eqref{def h}, expressing the lognormal as a mixture of the $\ell(b,c)$, we
have $h$  absolutely continuous, with
density $h_c$ with respect to Lebesgue measure.

We show how to express the lognormal as a mixture of the Chihara--Leipnik discrete distributions $\ell(b,c)$ from Theorem \ref{thm roy}, via   Lemma \ref{lemma Leipnik}, Lemma
\ref{lemma times c}, and Theorem \ref{thm ra}.  
There is a related result, expressing a particular
continuously distributed random variable, not the lognormal, but having
the same moments, as a mixture of
these discrete distibutions, in \cite[Proposition
2.2]{berg-discrete}.

\begin{lemma}\label{lemma Leipnik}
Fix $c>1$.  
For any probability measure $h$ on $[1,c)$,
the mixture of the laws $\ell(b,c)$, governed by $dh(b)$, gives a distribution for $X$ which satisfies \eqref{times c}.
The set $U_c$ of distributions which satisfies \eqref{times c} is closed and convex, hence  \emph{any} mixture of distributions which satisfy \eqref{times c} is also a distribution which satisfies
\eqref{times c}.
\end{lemma}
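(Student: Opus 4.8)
The plan is to deduce every assertion of Lemma \ref{lemma Leipnik} from one general fact: \emph{any mixture of laws that each satisfy \eqref{times c} again satisfies \eqref{times c}}. What makes this work is that every law satisfying \eqref{times c} has the same mean $\sqrt c$, so the ``$m(b)$ constant'' case of Lemma \ref{lemma mixture} applies directly, with no Radon--Nikodym reweighting of the governing measure.

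First I would prove that general fact. Let $h$ be a probability measure on an index set $I\subset\mathbb R$, and for each $b\in I$ let $\mu_b=\mathcal L(X_b)$ satisfy \eqref{times c}, so $X_b\ge0$, $\e X_b=\sqrt c$, and $X_b^*=^d cX_b$; assume $b\mapsto\mu_b$ is measurable. Let $X$ carry the mixture law. Then $X\ge 0$, and, as noted in the setup of Lemma \ref{lemma mixture}, $\e X=\int m(b)\,dh(b)=\sqrt c\in(0,\infty)$, so $X^*$ is well defined. Since $m(b)\equiv\sqrt c$ and $a=\sqrt c$, Lemma \ref{lemma mixture} gives that the law of $X^*$ is the mixture of the laws of the $X_b^*$ governed by the \emph{same} $h$, and since $X_b^*=^d cX_b$, for every bounded measurable $g$,
$$
\e g(X^*)=\int\e g(X_b^*)\,dh(b)=\int\e g(cX_b)\,dh(b)=\e g(cX),
$$
so $X^*=^d cX$ and $X$ satisfies \eqref{times c}. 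Taking $\mu_b=\ell(b,c)$ — which satisfy \eqref{times c} by Theorem \ref{thm roy} — and $I=[1,c)$ proves the first sentence of the lemma; taking $I$ finite proves that $U_c$ is convex; and the general statement just proved is precisely the last clause of the lemma.

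It remains to show $U_c$ is closed in the weak topology (together with tightness of $V_c$ this then makes $U_c$ compact). Here I would use the inclusion $U_c\subset V_c$ established after \eqref{solution moment}: every $X$ with $\mathcal L(X)\in U_c$ has $\e X^2=c^2$, so if $X_n\Rightarrow X$ with each $\mathcal L(X_n)\in U_c$, then $\{X_n\}$ is uniformly integrable (Markov: $\e(X_n;X_n>L)\le c^2/L$). Consequently $X\ge0$ and $\e X=\lim\e X_n=\sqrt c$. For bounded continuous $g$, \eqref{sizebias} applied to $X_n$ (with $a_n=\e X_n=\sqrt c$) together with $X_n^*=^d cX_n$ gives $\e(X_ng(X_n))=\sqrt c\,\e g(cX_n)$. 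Let $n\to\infty$: the right side tends to $\sqrt c\,\e g(cX)$ since $x\mapsto g(cx)$ is bounded and continuous, and the left side tends to $\e(Xg(X))$ since $x\mapsto xg(x)$ is continuous, $|X_ng(X_n)|\le\|g\|_\infty X_n$ makes $\{X_ng(X_n)\}$ uniformly integrable, and convergence in distribution plus uniform integrability forces convergence of expectations. Hence $\e g(cX)=\frac1{\sqrt c}\,\e(Xg(X))$ for all bounded continuous $g$ with $\sqrt c=\e X$, which by \eqref{sizebias continuous} means $cX=^d X^*$; thus $\mathcal L(X)\in U_c$. (One could alternatively invoke ``closed and convex implies closed under arbitrary mixtures'' via barycenters to re-derive the last clause, but the direct mixture argument above is cleaner and avoids that general fact.)

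The main obstacle is the closedness step, and the pitfall is to believe it automatic because every member of $U_c$ carries the lognormal moment sequence: moments do \emph{not} determine the law in this range — that non-uniqueness is the theme of the section — so ``same moments in the limit, hence same law'' is not available. The honest route is to pass to the limit in the size-bias identity tested against bounded continuous functions, and the uniform integrability furnished for free by the uniform bound $\e X_n^2=c^2$ is exactly what controls the unbounded factor $x$ in $\e(X_ng(X_n))$ (and in $\e X_n$).
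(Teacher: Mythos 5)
Your proof is correct and essentially matches the paper's: both use Lemma \ref{lemma mixture} with constant $m(b)\equiv\sqrt c$ to handle every mixture assertion (the first sentence, convexity, and arbitrary mixtures all at once), and both obtain closedness from the uniform integrability furnished by the uniform second-moment bound across $U_c$ --- you correctly compute $\e X^2 = c^2$ from \eqref{solution moment}, where the paper's ``$\exp(c^2)$'' is a slip. The only cosmetic difference is that you unpack the use of Theorem \ref{limit theorem} into an explicit bounded-continuous-test-function argument rather than citing it directly.
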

\begin{proof}
Since for each $b \in [1,c)$, $m(b) := \e X_{b,c} = \sqrt{c}$, we are in the situation for Lemma 
\ref{lemma mixture} where the measure $h'$ governing $X^*$ as a mixture of the $X_{b,c}^*$ is the same as the original $h$,
governing $X$ as a mixture of the $X_{b,c}$.  Hence \eqref{times c} holds, since, obviously, scaling respects mixtures, i.e.,  the law of $c X$ is the mixture, governed by $h$, of the laws of $ c \, X_{b,c}$.  

That $U_c$ %the set of solutions to \eqref{times c} 
is \emph{closed} is a bit subtle.  Assume we are given $X_1,X_2,\ldots$ with each $X_n$ satisfying
\eqref{times c}, and that  $X_n \Rightarrow Y$.  Obviously $c X_n \Rightarrow c Y$, and 
Theorem 
\ref{limit theorem}  asserts that $X_n^* \Rightarrow Y^*$,  which combined with \eqref{scaling} and \eqref{times c} gives $Y^* =^d c Y$.  But \eqref{times c} also demands that $\e Y = \sqrt{c}$, so one must know that the family $\{X_1,X_2,\ldots \}$ is \emph{uniformly integrable}.  Fortunately, 
\eqref{solution moment} implies that $\e X^2 = c^2$ for  any solution of \eqref{times c}, which implies that the family is uniformly integrable.  

Finally, for the convexity of $U_c$, just as with mixtures of the $\ell(b,c)$,  Lemma \ref{lemma mixture}
applies, with the same measure governing $X$ as  mixture of solutions $X_\alpha$, governing $X^*$ as a mixture of the $X_\alpha^*$, and $cX$ as a mixture of the $c X_\alpha$. 
\end{proof}

\ignore{
For the following lemma,
which will imply that any two solutions of \eqref{times c}, which agree on an interval of the form $[x,xc)$,
must agree everywhere,  we give the hypotheses in slightly ore general form --- since we don't understand the consequences of \eqref{times c} with the stipulation $\e X = \sqrt{c}$ removed.
}

\begin{lemma}\label{lemma times c}
Suppose $c>1$, and $X,Y$ are positive random variables which satisfy
$$ 
  0 < \e X = \e Y < \infty  \text{ and }X^* =^d cX, Y^*=^d cY  .
$$    
If the laws of $X$ and $Y$, both restricted to $[1,c)$ agree, even only up to a constant mass factor $k \ge 0$, i.e., if
\begin{equation}\label{hyp k}
\text{ for all measurable } A \subset [1,c), \  \p(X \in A) = k \, \p(Y \in A).
\end{equation}
then $X =^d Y$.  (The case $k=0$ is specifically included in the hypothesis \eqref{hyp k}, but in every case, the conclusion implies that $k=1$.)
\end{lemma}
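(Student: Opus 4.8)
The plan is to show that the functional relation $X^{*}=^{d}cX$, together with the common mean, completely determines the law of $X$ on $(0,\infty)$ from its restriction to the fundamental domain $[1,c)$; the hypothesis \eqref{hyp k} then forces the two such reconstructions, for $X$ and for $Y$, to be proportional, and normalization of probability measures does the rest. Write $\nu,\rho$ for the laws of $X,Y$ on $(0,\infty)$ and $a:=\e X=\e Y\in(0,\infty)$. First I would recast $X^{*}=^{d}cX$ as an identity between measures: since $\nu^{*}(dx)=(x/a)\,\nu(dx)$ and the law of $cX$ is the image of $\nu$ under $x\mapsto cx$, the relation reads
\[
   \int_{A}\frac{x}{a}\,\nu(dx)=\nu\big(c^{-1}A\big)\qquad\text{for every Borel }A\subset(0,\infty).
\]
Because $X>0$ and $0<\e X<\infty$, $\nu$ is carried by $(0,\infty)$ and the factor $x/a$ is strictly positive there, so $\nu$ and its dilate by $c$ are mutually absolutely continuous; putting $A=cB$ and substituting $x=cy$ rearranges the display to the one-step recursion $\nu(c\,dy)=\tfrac{a}{cy}\,\nu(dy)$, the continuous counterpart of \eqref{recursion mass}.

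Next I would iterate this relation in both directions --- the same orbit computation the paper carries out for point masses en route to \eqref{solution mass 0} --- to reach the closed form
\[
   \nu\big(c^{\,n}A\big)=\int_{A}\frac{a^{\,n}}{c^{\,n(n+1)/2}\,x^{\,n}}\,\nu(dx)\qquad\bigl(n\in\BZ,\ A\subset[1,c)\text{ Borel}\bigr),
\]
with the exponents fixed by checking $n=0,\pm1$ and inducting; as a sanity check, this recovers $\p(X=bc^{n})=b^{-n}c^{-n^{2}/2}\p(X=b)$ in the atomic case and $\nu=\L(e^{\sigma Z})$ when $X$ is the lognormal of \eqref{lognormal property}. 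Since the dilates $c^{n}[1,c)=[c^{n},c^{n+1})$, $n\in\BZ$, tile $(0,\infty)$ disjointly, every Borel set is a countable disjoint union of sets $c^{n}A_{n}$ with each $A_{n}\subset[1,c)$ Borel, so the displayed formula shows that $\nu$ is determined on all of $(0,\infty)$ by $\nu|_{[1,c)}$, and likewise $\rho$ by $\rho|_{[1,c)}$, through the \emph{same} kernel --- this, and only this, is where the equality $\e X=\e Y$ gets used.

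Finally, feeding \eqref{hyp k} into the closed form gives $\nu(c^{n}A)=k\,\rho(c^{n}A)$ for every $n\in\BZ$ and every Borel $A\subset[1,c)$ --- here one uses that $\nu=k\rho$ on Borel subsets of $[1,c)$ entails $\int_{A}f\,d\nu=k\int_{A}f\,d\rho$ for bounded measurable $f$ and $A\subset[1,c)$, and the kernel above is bounded on $[1,c)$ --- whence $\nu=k\rho$ as measures on $(0,\infty)$. Evaluating at $(0,\infty)$ then gives $1=\nu\bigl((0,\infty)\bigr)=k\,\rho\bigl((0,\infty)\bigr)=k$, so $k=1$ (in particular the case $k=0$ of \eqref{hyp k} cannot occur) and $X=^{d}Y$. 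I expect the one genuine obstacle to be the iteration step: it has to be carried out at the level of measures and Radon--Nikodym derivatives rather than densities, so as to cover laws $\nu$ that are neither discrete nor absolutely continuous, keeping the mutual absolute continuity of $\nu$ with its dilates (equivalently, strict positivity of the relevant densities on $(0,\infty)$) in force at each step; the tiling, the exponent bookkeeping, and the closing normalization are routine.
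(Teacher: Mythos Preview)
Your proposal is correct and follows essentially the same route as the paper: both arguments iterate the one-step relation encoded in $X^*=^d cX$ across the tiling $\{[c^n,c^{n+1})\}_{n\in\BZ}$ to propagate the proportionality \eqref{hyp k} from $[1,c)$ to all of $(0,\infty)$, then read off $k=1$ from total mass. The only cosmetic difference is that you first solve the recursion to a closed form $\nu(c^nA)=\int_A a^n c^{-n(n+1)/2}x^{-n}\,\nu(dx)$ and then compare $\nu$ with $\rho$, whereas the paper carries the comparison $\e g(X)=k\,\e g(Y)$ along the induction without ever writing the kernel explicitly.
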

\begin{proof}
Let $a := \e X$, so by hypothesis, we also have $a= \e Y$, (but unlike
\eqref{times c}, 
we are \emph{not} assuming that $a=\sqrt{c}$).
Let $S(n)$ be the statement that for all bounded measurable $g$ which vanish 
outside $[c^n,c^{n+1})$, we have $\e g(X) = k \, \e g(Y)$. The hypothesis \eqref{hyp k} clearly implies the statement $S(0)$. 
Assume now that $S(n)$ holds.  Given a bounded measurable function  $g$ which vanishes off of $[c^{n+1},c^{n+2})$, we \emph{define} new functions  $g',g''$ by $g'(x) = g(x)/x$ and $g''(x)=g'(cx)$.  Clearly $g''$  is bounded, and vanishes off of $ [c^n,c^{n+1})$.
We have
$$
  \e g(X) = \e (X g'(X)) = a \, \e g'(X^*) = a \,   \e g'(cX) = a \, \e g''(X)  
$$
and similarly $\e g(Y) = a \, \e g''(Y)$.    Invoking $S(n)$ for the function $g''$, we get 
\begin{equation}\label{shifted}
   \e g(X) =  a \, \e g''(X)  = ak \, \e g''(Y) = k \e g(Y),
\end{equation}
hence $S(n)$ implies $S(n+1)$.

A similar argument shows that $S(n)$ implies $S(n-1)$.  In detail, given a bounded measurable function  $g$ which vanishes off of $[c^{n-1},c^{n})$, we \emph{define} new functions  $g',g''$ by $g'(x) = g(x/c)$, so that $g'(cx)=g(x)$,  and $g''(x)=xg'(x)$.  Clearly $g''$  is bounded, and vanishes off of $ [c^n,c^{n+1})$.
We have
$$
  \e g(X) = \e ( g'(cX)) =  \e g'(X^*) = \frac{1}{a}   \e (Xg'(X) )= \frac{1}{a} \e g''(X)  
$$
and similarly  $\e g(Y) = (1/a) \e g''(Y)$; hence \eqref{shifted} holds exactly as before, but this time showing that $S(n)$ implies $S(n-1)$.

Finally, knowing $S(n)$ for all $n \in \BZ$ implies that for bounded measurable $g$, $\e g(X) = k \, \e g(Y)$, and the special case $g=1$ shows that $k=1$, and hence $X =^d Y$.
\end{proof}

The following Theorem \ref{thm ra} applies in particular to the case where $X$ has the lognormal distribution with density $f(x) =$ $ 1/( x \sqrt{2 \pi} \sigma)
\exp(-(\log x)^2/(2 \sigma^2))$, recalling that with $c=\exp(\sigma^2)$,  $X$ satisfies \eqref{times c}.

\begin{theorem}\label{thm ra}
%Fix $\sigma>0, c=\exp(\sigma^2)$, and let $X$  be the lognormal $e^{\sigma Z}$, \emph{or any other} 
Let $X$ be any
positive random variable which satisfies \eqref{times c}.  Then there is a unique probability measure $h$ on $[1,c)$
such that the distribution of $X$ is the mixture, governed by $dh(b)$, of the 
Chihara--Leipnik single orbit distributions $\ell(b,c)$ of Theorem \ref{thm roy}, 
with point mass functions (and Jacobi theta function $t$) given by
\eqref{solution mass}.  The measure $h$ governing the mixture is specified as follows:  let $B$ be distributed as  $X$, conditional on $(X \in [1,c))$. Then the probability measure $h$ has Radon-Nikodym derivative, relative to the distribution of $B$, given by
\begin{equation}\label{mixture RN}
 \frac{h(db)}{\p(B \in db)} = \frac{t(b,c)}{\e t(B,c)}.
\end{equation}
%  For %the special case of 
%  the lognormal, % $X=e^{\sigma Z}$, 
%  with density
%   $f(x) =$ $ 1/( x \sqrt{2 \pi} \sigma)
%  \exp(-(\log x)^2/(2 \sigma^2))$, %--- observing that $f$ depends on $c$ through the relation $c = e^{\sigma^2}$ ---
If the distribution of $X$ is absolutely continuous with respect  to Lebesgue measure, so that $X$ has a density $f$,
the recipe \eqref{mixture RN} says that with $t$ given by \eqref{solution mass}, and  normalizing constant $k_c$ 
and function $h_c$ with domain $[1,c)$,
defined by   
\begin{equation}\label{def h}
   k_c := \int_{x=1}^c f(x) \,  t(x,c) \ dx,\ \     h_c(b) := \frac{1}{k_c}         f(b) \,  t(b,c),
\end{equation}
the measure $h$ governing the mixture has density $h_c$, so that for measurable $A \subset [1,c)$, $h(A) = \int_{{b \in A}} h_c(b) \ db$.
\end{theorem}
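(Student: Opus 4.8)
The plan is to work one orbit at a time. Under multiplication by $c$ the half-line $(0,\infty)$ splits into orbits $\{bc^n : n \in \BZ\}$, each meeting the ``fundamental annulus'' $[1,c)$ in exactly one point. The Chihara--Leipnik law $\ell(b,c)$ is supported on the orbit of $b$, and for $b \in [1,c)$ the point of that orbit lying in $[1,c)$ is $b$ itself, with mass $1/t(b,c)$ --- the $n=0$ term of \eqref{solution mass}. Hence, for \emph{any} probability measure $g$ on $[1,c)$, a random variable $M_g$ whose law is the mixture of the $\ell(b,c)$ governed by $g$ satisfies
\[
   \p(M_g \in A) = \int_A \frac{dg(b)}{t(b,c)}, \qquad A \subset [1,c) \text{ measurable.} \qquad (\star)
\]
This single identity drives everything. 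I would also note at the outset that any $X$ satisfying \eqref{times c} has $\p(X \in [1,c)) > 0$: combining $X^* =^d cX$ with \eqref{size bias RN} shows that $\L(X)$ gives the annulus $[c^n,c^{n+1})$ a positive multiple of $\int_{[c^{n+1},c^{n+2})} x\,\p(X \in dx)$, so consecutive annuli carry positive mass together or not at all; since their masses sum to $1$, each of them --- in particular $[1,c)$ --- is positive, so $B$, the law of $X$ conditioned on $\{X \in [1,c)\}$, is well defined.

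For uniqueness, and to discover the formula, suppose $X$ satisfies \eqref{times c} and $X =^d M_g$ for some probability measure $g$ on $[1,c)$. Then $\L(M_g)$ is determined, so its restriction to $[1,c)$ is determined, and by $(\star)$ that restriction is $A \mapsto \int_A dg(b)/t(b,c)$; matching this against $\p(X \in A) = \p(X \in [1,c))\,\p(B \in A)$ for $A \subset [1,c)$ forces $dg(b) = \p(X \in [1,c))\,t(b,c)\,\p(B \in db)$, and then dividing by the total mass (recall $g$ is a probability measure) gives exactly $dg(b)/\p(B \in db) = t(b,c)/\e t(B,c)$, i.e.\ $g = h$ as in \eqref{mixture RN}. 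So there is at most one admissible $h$, and it is identified.

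For existence I would turn this around: define $h$ by \eqref{mixture RN} --- it is automatically a probability measure, with $\e t(B,c) < \infty$ since $t(\cdot,c)$ is bounded on $[1,c)$ --- and set $M := M_h$. By Lemma \ref{lemma Leipnik}, $M$ satisfies \eqref{times c}, so in particular $\e M = \sqrt c = \e X$; and by $(\star)$, for $A \subset [1,c)$,
\[
  \p(M \in A) = \int_A \frac{dh(b)}{t(b,c)} = \frac{\p(B \in A)}{\e t(B,c)} = \frac{\p(X \in A)}{\e t(B,c)\,\p(X \in [1,c))},
\]
so $\L(M)$ and $\L(X)$, restricted to $[1,c)$, agree up to a constant factor. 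Lemma \ref{lemma times c}, applied to $M$ and $X$, then yields $M =^d X$ --- which is exactly the claim that $\L(X)$ is the mixture governed by $h$ --- and it applies uniformly to the lognormal $X = e^{\sigma Z}$ (which satisfies \eqref{times c} by \eqref{lognormal property}) and to every other solution of \eqref{times c}. The concrete recipe \eqref{def h} is just \eqref{mixture RN} written out for the lognormal: there $B$ has density $f(b)/\p(X \in [1,c))$ on $[1,c)$, so $h(db)$ is proportional to $f(b)\,t(b,c)\,db$, with normalizing constant $k_c = \int_1^c f(x)\,t(x,c)\,dx = \e t(B,c)\,\p(X \in [1,c))$.

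The whole argument is bookkeeping around $(\star)$; the one genuinely structural step --- upgrading ``$\L(M)$ and $\L(X)$ agree on the single annulus $[1,c)$'' to ``$M =^d X$ on all of $(0,\infty)$'' --- is precisely what Lemma \ref{lemma times c} supplies, so I expect no real obstacle beyond keeping the orbit combinatorics and the conditioning on $[1,c)$ straight.
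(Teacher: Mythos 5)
Your proof is correct and follows essentially the same route as the paper's: the engine in both cases is Lemma \ref{lemma times c}, applied after matching the restrictions of $\L(X)$ and of the candidate mixture to the fundamental annulus $[1,c)$, via the observation (your identity $(\star)$, the paper's $\mu_{b,0}$) that $\ell(b,c)$ places mass $1/t(b,c)$ at the single point $b\in[1,c)$. The one genuinely distinct detail is your proof that $\p(X\in[1,c))>0$: you derive it directly from $X^*=^d cX$ by noting that $\p(X\in[c^n,c^{n+1}))$ is a positive multiple of $\int_{[c^{n+1},c^{n+2})}x\,\p(X\in dx)$, so consecutive annuli carry positive mass together or not at all and hence, summing to $1$, all do; the paper instead argues by contradiction, invoking Lemma \ref{lemma times c} with $k=0$ against the lognormal. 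Your version is more elementary and self-contained; otherwise the two arguments coincide, differing only in whether existence or uniqueness is dispatched first.
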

\begin{proof}
First, we must show that the distribution of $B$ was well-defined, i.e., that $\p(X \in [1,c))>0$.  Here we argue by contradiction:  if $\p(X \in [1,c))=0$, then Lemma  \ref{lemma times c} could be invoked, with $Y = e^{\sigma Z}, k=0$, to prove $X =^d Y$, a contradiction since $\p(Y \in [1,c))>0$.

Now write $Y$ for a random variable whose distribution is the mixture of the $\ell(b,c)$, governed by $h$.
We use the Dirac notation, that $\delta_x$ is unit mass at $x$, so that $\int g(z) \delta_x(dz) = g(z)$ for any measurable $g$.
Restricting our attention to $b \in [1,c)$, the Chihara--Leipnik distributions are then expressed as
$$
   \ell(b,c) = \sum_{n \in \BZ} \mu_{b,n}  \ \text{ where }  \mu_{b,n} := \frac{b^{-n} c^{-n^2/2}}{t(b,c)} \ \delta_{b c^n}.
$$
so that  $\mu_{b,n}$ is the measure $\ell(b,c)$ restricted to the interval $[c^n,c^{n+1})$ --- this uses $b \in [1,c)$.

Focus on the case $n=0$, so that $\mu_{b,0}$ is mass $1/t(b,c)$ at the point $b$.  
The specification of $h$ in \eqref{mixture RN} implies directly that the hypothesis \eqref{hyp k} holds  --- with $k=\p(X \in [1,c)) \times \e t(B,c)  $.  Hence by Lemma \ref{lemma times c}, we have $X =^d Y$.  

The argument for uniqueness is essentially the same:  suppose that $Y$
is a mixture of $\ell(b,c)$, governed by some probability measure $h$
on $[1,c)$, and that $X=^d Y$, not assuming that $h$ is given by \eqref{mixture RN}.
  Restricting the distributions of both $X$ and $Y$ to $[1,c)$, it is clear,
from  $\mu_{b,0} = 1/t(b,c) \ \delta_b$, that   the  Radon-Nikodym derivative $h(db)/ \p(X \in db \, | X \in [1,c))$
must be proportional to $t(b,c)$.  The recipe in 
\eqref{mixture RN} gives the unique constant of proportionality to make such an $h$ into a probability measure.
\end{proof} 

For each $c>1$, Lemma \ref{lemma Leipnik} says that the convex set
spanned by the Chihara--Leipnik distributions $\ell(b,c)$, $b \in
[1,c)$ is a \emph{subset} of the set $U_c$ of all solutions of \eqref{times c}.  Theorem 
\ref{thm ra} asserts that
$U_c$ % the entire set of solutions of \eqref{times c} 
is spanned by the $\ell(b,c)$,
so together with the obvious property that any single $\ell(b,c)$ is not a nontrivial mixture of other $\ell(b',c)$, one now knows that
 the  \emph{extreme} points of the set of solutions of \eqref{times c}
 \emph{are} the distributions $\ell(b,c)$, for $b \in [1,c)$.  
 For historical naming and perspective:  Choquet's Theorem states that for a convex compact subset of a normed space, every point can be represented as a mixture, governed by a probability measure, of extreme points; this probability measure need not be unique, even in the finite dimensional setting.  However, in the finite dimensional setting, uniqueness holds when the convex set is a simplex.  In honor of this, a convex set, for which the every point has a unique representation as a mixture of extreme points, is called a \emph{Choquet simplex}. 
 The
 additional information in Theorem \ref{thm ra} about the
 \emph{uniqueness} of $h$    is then summarized by saying that the set
 $U_c$ of solutions to \eqref{times c} forms a \emph{Choquet simplex}.
 
It is now natural to ask whether Stietljes' examples, with density given by \eqref{heyde both},
lie in this Choquet simplex.   

\begin{proposition}\label{prop stieltjes}
For every $\sigma>0$, integer $m$,  and  real $\delta \in [-1,1]$, the random variable $X$ with density given by Stieltjes' formula \eqref{heyde both} satisfies $X^* =^d cX$, with $c=\exp(\sigma^2)$, and hence $X$ satisfies \eqref{times c}.
\end{proposition}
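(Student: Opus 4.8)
The plan is to read off \eqref{times c} directly from the density formula \eqref{heyde both}, using \eqref{sizebias-f} to convert the assertion $X^{*}=^{d}cX$ into an identity between densities. If $X$ has density $h_{m,\delta}$, then \eqref{sizebias-f} says $X^{*}$ has density $x\,h_{m,\delta}(x)/\e X$, while $cX$ has density $(1/c)\,h_{m,\delta}(x/c)$. So the whole proposition reduces to the single scaling identity
\[
 x\,h_{m,\delta}(x) \;=\; \frac{1}{\sqrt{c}}\,h_{m,\delta}(x/c),\qquad x>0,
\]
together with the evaluation $\e X=\sqrt{c}$; granting these, $x\,h_{m,\delta}(x)/\e X=(1/c)\,h_{m,\delta}(x/c)$ is exactly the density of $cX$, and since $X>0$ and $\e X=\sqrt c$, all three conditions in \eqref{times c} hold.

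To prove the scaling identity I would factor $h_{m,\delta}=f_{0,\sigma^{2}}\cdot g_{m,\delta}$ as in \eqref{heyde both} and treat the two factors separately. For the lognormal factor, expanding $(\log(x/c))^{2}=(\log x-\sigma^{2})^{2}$ in the exponent of \eqref{lognormal density} gives $f_{0,\sigma^{2}}(x/c)=\sqrt{c}\;x\,f_{0,\sigma^{2}}(x)$ --- this is precisely the computation already carried out in the footnote to \eqref{lognormal property}, specialized to $\mu=0$, $\beta=1$, and is just a restatement of the fact that the pure lognormal $e^{\sigma Z}$ satisfies $X^{*}=^{d}cX$. For the oscillating factor, the key observation is that $\log(x/c)/\sigma^{2}=(\log x)/\sigma^{2}-1$, so the argument of the sine in \eqref{heyde} changes by $-2\pi m$; since $m$ is an \emph{integer}, this is an integer multiple of the period $2\pi$, and therefore $g_{m,\delta}(x/c)=g_{m,\delta}(x)$. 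Multiplying the two relations yields the displayed identity. For the mean, I would simply invoke the moment identity recorded immediately after \eqref{heyde both}: for every integer $n$, $\int x^{n}h_{m,\delta}(x)\,dx=\int x^{n}f_{0,\sigma^{2}}(x)\,dx=e^{n^{2}\sigma^{2}/2}$; taking $n=1$ gives $\e X=e^{\sigma^{2}/2}=\sqrt{c}$. (The hypothesis $\delta\in[-1,1]$ is what guarantees $h_{m,\delta}\ge 0$, so that $X$ is a genuine nonnegative random variable and $X^{*}$ is defined; this need only be noted, not proved.)

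There is essentially no hard step here; the one point requiring a moment's care --- the ``main obstacle,'' such as it is --- is the periodicity step for $g_{m,\delta}$. It is exactly the integrality of $m$ that makes the Stieltjes perturbation invariant under multiplication by $c$, and hence leaves the size-bias scaling of the underlying lognormal undisturbed; if $m$ were not an integer the identity, and the conclusion, would fail. Everything else is routine bookkeeping with \eqref{sizebias-f} and the elementary change of variables already used for the lognormal itself.
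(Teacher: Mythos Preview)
Your proof is correct and follows essentially the same route as the paper's: both reduce $X^{*}=^{d}cX$ to the density identity $h_{m,\delta}(x/c)=x\sqrt{c}\,h_{m,\delta}(x)$, then verify it by factoring $h_{m,\delta}=f_{0,\sigma^{2}}\cdot g_{m,\delta}$ and observing that the lognormal factor supplies the $x\sqrt{c}$ while the sine perturbation is invariant because its argument shifts by the integer multiple $2\pi m$. Your treatment is slightly more explicit than the paper's in separately checking $\e X=\sqrt{c}$ via the moment identity after \eqref{heyde both}, but otherwise the arguments coincide.
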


\begin{proof}
For random variables with a density, the size bias scaling relation in \eqref{times c},  can be expressed in terms of the density, as follows.
First, when $X$ has density $f$, the scaled multiple $cX$ has density $(1/c)f(x/c)$.  Second, when $X$ has density $f$, and mean $a = \e X = \sqrt{c}$,  \eqref{sizebias-f} states that $X^*$ has density $(x/\sqrt{c}) f(x)$.   Hence, if $X$ has density $f$, mean $\sqrt{c}$, and 
\begin{equation}\label{times c density}
     \forall x \in (0,\infty), \ f(x/c) = x \sqrt{c} f(x),
\end{equation}
then $X$ satisfies \eqref{times c}.    Now it is clear that \eqref{times c density} holds for $f=h_{m,\delta}$ given by \eqref{heyde both}:   we have $c = e^{\sigma^2}$, and upon substituting $x/c$ for $x$, the lognormal factor $f_{0,\sigma^2}$ supplies the factor $x \sqrt{c}$, and the perturbation 
 factor $g_{m,\delta}$ supplies no change, since  dividing $x$ by $c$ causes $\log x$ to decrease by $\log c = \sigma^2$, so that the argument to the sine function, $2 \pi m \log x / \sigma^2$, goes down by  $2 \pi m$.
\end{proof}

To review:  both the lognormal  \emph{and} the examples given by
Stieltjes  are solutions of \eqref{times c}
and hence lie in the Choquet simplex $U_c$. %discussed in Theorem \ref{thm ra}.  
Do all distributions having the lognormal moment sequence lie in this
simplex, i.e., does $U_c = V_c$? Berg \cite[Proposition
2.1]{berg}, proved $U_c \subsetneq V_c$ by exhibiting 
% an element of $V_c \setminus U_c$.  This distribution can be described as the perturbation of
elements of $V_c \setminus U_c$.  These distributions can be described as the perturbations of
the  Chihara--Leipnik distribution in
\eqref{solution mass} by a factor of $(1 + s (-1)^n)$,
 for $s \in    [-1,1]$.  
 In detail, Berg showed that for any $c>1$,
\begin{equation}\label{berg}
b=\sqrt{c}, s \in \{-1,1\}, \ \ \p(X_s=bc^n)=(1+s(-1)^n)
b^{-n} c^{-n^2/2}/t(b,c), n \in \BZ
\end{equation} leads to $\e X_s^n = c^{n^2/2}$ for
$n \in \BZ$.  In particular, for $b=\sqrt{c}$ the  Chihara--Leipnik
distribution $\ell(b,c)$ is the midpoint of the line connecting the
distributions of $X_{-1}$ and $X_1$.  The construction is special to
$b =\sqrt{c}$ as the only value of $b \in [1,c)$ for which a line of 
distributions with moments  $\e X^n = c^{n^2/2}$ can be constructed,
with $\ell(b,c)$ as the midpoint.

\ignore{Going out on a limb, we conjecture that apart from the above
construction, the Chihara--Liepnik distributions are extreme points,
relative to the given moment sequence.  That is

\begin{conjecture}\label{conj times c}
For any $\sigma >0$,  with $c=\exp(\sigma^2)$, consider the set $V$ of probability measures on
$[0,\infty)$, serving as distributions for a nonnegative random
variable $X$ with  $\e X^n = c^{n^2/2}$ for $n=0,1,2,\ldots$.  This
set is a Choquet simplex, whose extreme points are precisely the
distributions $\ell(b,c)$ for $b \in [1,\sqrt{c}) \cup (\sqrt{c},c)$,
together with two additional distributions, those of   $X_{-1}$ and $X_1$,
as given by \eqref{berg}.
\end{conjecture}

Any lognormal distribution is infinitely divisible; 
for background and references on this, see Examples \ref{ex pareto} and \ref{ex lognormal}.
So, in the following conjecture, the emphasis is on the word \emph{only}.

\begin{conjecture}\label{conj logn}
For each $c>1$, in the Choquet simplex $U_c$
of solutions of \eqref{times c}, as described in Theorem \ref{thm ra},  the \emph{only} 
infinitely divisible distribution is the lognormal, that of $e^{\sigma Z}$ with $c=e^{\sigma^2/2}$.
\end{conjecture}

In the larger closed convex set $V_c$ of distribution sharing the
moments of the lognormal, the lognormal is \emph{not} the only
infinitely divisible element; see \cite{berg6}.
}   %END IGNORE

We have shown that $U_c$ is a Choquet simplex;  the question as to whether
$V_c$ is a Choquet simplex is open.  We thank Christian Berg, private
communication, for this information and several references, and also for correcting two
erroneous conjectures from an earlier draft of our paper. 

\section{Size bias and  Skorohod embedding}\label{sect skorohod}

Skorohod's embedding theorem states that given a nonconstant mean zero random
variable $X$, there is a random time $T$ for
Brownian motion $(W_t)_{t \ge 0}$ such that $X =^d W_T$.  
%For a classical proof, see \cite{billingsley}.  
We discuss Skorohod's proof as presented, for example, in
\cite{durrett,obloj}.  The  proof is based on  the construction of a joint distribution for a dependent
pair $(U,V)$ with $U,V \ge 0$  so that, with the pair independent of the Brownian motion,
the random time  $T := T_{U,V} := \inf\{t: W_t \notin [-U,V]\}$ yields $X =^d
W_T$. Since $\p(W_T=V|U,V)=U/(U+V))$, and the function $(u,v) \mapsto
u/(u+v)$ is nonlinear, it is somewhat surprising that a simple 
distribution of $(U,V)$ can satisfy $X =^d W_{T_{U,V}}$.  That
distribution,  specified in \cite{durrett,obloj} by the formula
$$
dH_\mu(u, v) = (v-􀀀u) 1(u
\le 0 \le v) \ \mu(du) \mu(dv) / \e X^+,
$$
where $\mu$ is the distribution of $X$, 
is the same\footnote{apart from a notational switch between $-u$ and
  $u$; we write $-u
  \le 0 \le v$ and they write $u \le 0 \le v$.} as the distribution \eqref{final recipe}
  below 
   in our
size bias treatment.
Display \eqref{kill} highlights how size bias overcomes the
nonlinearity of $(u,v) \mapsto
u/(u+v)$.
The excellent survey by  Ob\l\'{o}j \cite{obloj} should be consulted
for the history and connections to \emph{the potential of a measure.}
%Recognizing the use of size bias doesn't
%shorten the computation at the heart of the proof, but nonetheless is
%pleasing.

To define the joint distribution for $(U,V)$ in $[0,\infty)^2$, consider
 random variables $A,B$ with values in $[0,\infty)$ with distribution given by
\ignore{
$$
  \p(A \in C) = \frac{\p(-X \in C)}{\p(X<0)}, \ \   \p(B \in C) =
  \frac{\p(X \in C)}{\p(X>0)};
$$ 
}
$$
\cL(A) = \cL(-X|X<0), \ \ \ \cL(B) = \cL(X|X>0);
$$ 
since $X$ is nonconstant and
mean zero, both $p_-:=\p(X<0)>0$ and  $p_+ := \p(X>0)>0$, so 
the conditioning is elementary.  %  this is elementary conditioning.  
Note that
\begin{equation}\label{champ}
\e A=\e X^-/p_-, \ \ \  \e B=\e X^+/p_+, \text{ and } \e X^- = \e X^+. 
\end{equation}
%with   $\e X^- = \e X^+ = \frac{1}{2} \e |X|$.  
Write $p_0 := \p(X=0)$.
Since $A$ and $B$ have finite positive mean,   
the size biased distributions of $A^*$ and $B^*$ are well defined.
Couple so that  $A,A^*,B,B^*$ are independent.
The final recipe, writing $\delta_q$ for unit mass at the point $q$, is
\begin{equation}\label{final recipe}
\cL(U,V) = p_+ \ \cL(A^*,B) + p_0 \ \delta_{(0,0)} + p_-\ \cL(A,B^*),
\end{equation}
and then take $(U,V)$ to be independent of the Brownian motion $W$.

To prove that \eqref{final recipe} and $T=T_{U,V}$ achieve $X =^d
W_T$, first consider the case where $\p(X=0)=0$. 
Given a bounded measurable function $h: \BR \to \BR$, conditioning on
$U,V$ and using the exit distribution for Brownian motion from the
interval $[-u,v]$ we  have
\begin{equation}\label{g from h}
\e ( h(W_T)|U=u,V=v ) = 
h(-u)\frac{v}{u+v} + h(v)\frac{u}{u+v}  =: g(u,v).
\end{equation}
Next, since we are in the case where $p_- + p_+=1$, 
%and as $\e X^-= \e X^+$, 
using \eqref{champ} we have 
\beas
p_- = \frac{\e B}{\e A+ \e B},  \ \ \ p_+ = \frac{\e A}{\e A+ \e B}.
\enas
The size bias relation for processes from Section \ref{sect basics one}, together with the independence of $A,B$,  justifies the transition from
line 2 to line 3 below:  for any
bounded measurable $g: \BR^2 \to \BR$,
\begin{eqnarray}
 \e g(U,V)&=& p_+ \  \e g(A^*,B)+ p_- \  \e g(A,B^*)  \nonumber \\
       &=& \frac{\e A \ \e g(A^*,B)+ \e B \ \e g(A,B^*)}{\e(A+B)} \nonumber\\
       &=& \frac{\e(A g(A,B)) + \e (B g(A,B))}{\e(A+B)}  \label{kill} \\
       &=& \frac{\e((A+B) \ g(A,B))}{\e(A+B)}. \nonumber
\end{eqnarray}
Using this identity for our function $g$ defined in \eqref{g from h}, and using  the independence
of $A$ and $B$ to go from line 3 to line 4,  we have
\beas
\e h(W_T)      &=& \e g(U,V) \\
               &=& \frac{\e((A+B)\ g(A,B))}{\e(A+B)} \\
               &=&\frac{\e \left( h(-A)B+h(B)A \right)}{\e(A+B)}\\
               &=& \frac{\e B}{\e (A+B)} \ \e h(-A) + \frac{\e A}{\e
                 (A+B)} \  \e h(B)\\
               &=& p_- \ \e h(-A) + p_+ \ \e h(B)\\
               &=& \e(h(X)|X<0) \ p_- + \e (h(X)|X>0) \ p_+ \\
               &=& \e h(X),
\enas
and hence ${\mathcal L}(W_T)={\mathcal L}(X)$, as claimed.

That  $X =^d
W_T$ in the general situation, allowing $\p(X=0) \in (0,1)$, is easily seen, %since it is  a mixture of pointmass at
%0 for $X$, achieved with pointmass at $(0,0)$ for $(U,V)$, together
%with the situation for $\p(X=0)=0$, treated above.
since the distribution of $X$ is then a mixture of pointmass at zero, and the distribution of $X$ conditional on $X \ne 0$, and the recipe \eqref{final recipe} is the corresponding mixture of  pointmass at (0,0)  and the distribution of $(U,V)$ treated above.

\section{Size bias and infinite divisibility}\label{sect inf div}
Paul L\'evy's theory of infinitely divisible distributions
is celebrated;  see
any of \cite{billingsley,chung,feller2,kallenberg} for introductory treatments,
or \cite{applebaum,bertoin,sato} for advanced treatments.  
% \cite{billingsley} or \cite{kallenberg} for full treatments.
For the special case of nonnegative
random variables with finite mean, size  bias provides an easy handle on the theory.

\subsection{Steutel revisited}

\begin{theorem}\label{thm inf div}
Suppose $X$ can be size biased, i.e., $X \ge 0$ and $a := \e X \in
(0,\infty)$.  If $X$ is infinitely divisible, then there exists a
%unique
distribution for $Y$ %$Y \ge 0$ such that with $X,Y$ independent,
such that
\begin{equation}\label{indep}
%X^*=^dX+Y, \ \ Y \geq 0, \ \ \ \mbox{ and } X,Y \mbox{ are independent}.
X^*=^dX+Y,  \ \ \mbox{ and } X,Y \mbox{ are independent}.
\end{equation} 
Conversely, given that $X$ can be size biased, and that  \eqref{indep}
holds for some $Y$, then $X$ is infinitely divisible.

In either case,  the distribution of $Y$ is
unique, and $\p(Y \ge 0)=1$.
\end{theorem}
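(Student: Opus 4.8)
The plan is to run everything through Laplace transforms, which fit nonnegative variables perfectly. Write $L_X(s)=\e e^{-sX}$ for $s\ge 0$; since $X\ge 0$ this is strictly positive, and differentiating under the integral gives $L_X'(s)=-\e(Xe^{-sX})$ for $s>0$, so the Laplace analogue of \eqref{phi'} reads $L_{X^*}(s)=(1/a)\,\e(Xe^{-sX})=-(1/a)L_X'(s)$. Given any nonnegative $Y$ independent of $X$, the relation $X^*=^d X+Y$ is equivalent to $L_{X^*}=L_XL_Y$, hence to the linear ODE $L_X'(s)=-a\,L_Y(s)L_X(s)$, which (using $L_X>0$ and $L_X(0)=1$) integrates to
\begin{equation*}
   L_X(s)=\exp\Bigl(-a\int_0^s L_Y(r)\,dr\Bigr).
\end{equation*}
So the entire statement reduces to recognizing exactly when the right-hand side is the Laplace transform of a nonnegative infinitely divisible law.

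For the forward implication I would start from the L\'evy--Khintchine (subordinator) representation of a nonnegative infinitely divisible $X$ with finite mean --- a classical fact, also recoverable from Theorem~\ref{thm inf div and Levy} --- namely $L_X(s)=\exp\bigl(-bs-\int_{(0,\infty)}(1-e^{-sx})\,\nu(dx)\bigr)$ with drift $b\ge 0$, L\'evy measure $\nu$ on $(0,\infty)$ satisfying $\int x\,\nu(dx)<\infty$, and $a=b+\int x\,\nu(dx)$. Differentiating and dividing by $-a$ yields $L_{X^*}(s)=L_X(s)\cdot\frac1a\bigl(b+\int_{(0,\infty)}x e^{-sx}\,\nu(dx)\bigr)$, and the second factor is exactly $L_Y(s)$ for the probability measure $\rho:=\frac1a\bigl(b\,\delta_0+x\,\nu(dx)\bigr)$ on $[0,\infty)$ --- its total mass is $(b+\int x\,\nu(dx))/a=1$. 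Hence $Y\sim\rho$ realizes \eqref{indep}, and $Y\ge 0$ by construction.

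For the converse, assume \eqref{indep} holds for some $Y$. I would first show $Y\ge 0$ a.s.\ using only \eqref{sizebias}: with $g(x)=1(x<m)$, where $m:=\inf\,\text{supp}(X)$, we get $\p(X+Y<m)=\p(X^*<m)=(1/a)\,\e\bigl(X\,1(X<m)\bigr)=0$; then by independence $\p(Y<-\varepsilon)\,\p(X<m+\varepsilon)\le\p(X+Y<m)=0$ for every $\varepsilon>0$, while $\p(X<m+\varepsilon)>0$ since $m\in\text{supp}(X)$, so $\p(Y<-\varepsilon)=0$ for all $\varepsilon>0$, i.e.\ $Y\ge 0$. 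Writing $\mu_Y$ for the law of $Y$ on $[0,\infty)$, Tonelli converts the exponent in $L_X(s)=\exp(-a\int_0^s L_Y(r)\,dr)$ into subordinator form:
\begin{align*}
   a\int_0^s L_Y(r)\,dr &= a\,s\,\mu_Y(\{0\}) + a\int_{(0,\infty)}\frac{1-e^{-sx}}{x}\,\mu_Y(dx) \\
   &= bs+\int_{(0,\infty)}(1-e^{-sx})\,\nu(dx),
\end{align*}
with $b:=a\,\mu_Y(\{0\})\ge 0$ and $\nu(dx):=(a/x)\,\mu_Y(dx)$; since $\int\min(1,x)\,\nu(dx)=a\int\min(1/x,1)\,\mu_Y(dx)\le a<\infty$, this is a legitimate L\'evy--Khintchine exponent, so $X$ is infinitely divisible (and, as a consistency check, $b+\int x\,\nu(dx)=a\mu_Y(\{0\})+a\mu_Y((0,\infty))=a=\e X$).

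Uniqueness of $\L(Y)$ is then immediate in both directions: $L_X$ is strictly positive, so $L_Y=L_{X^*}/L_X$ is determined by $\L(X)$ (which determines $\L(X^*)$), and a Laplace transform pins down the law of a nonnegative random variable; combined with $Y\ge 0$ this determines $Y$. The step I expect to be the main obstacle is the converse's assertion that $\exp(-a\int_0^s L_Y)$ is genuinely an infinitely divisible Laplace transform --- this is exactly why establishing $Y\ge 0$ first, so the exponent can be rewritten in subordinator form, is the crux, and it is also the place where the authors' route via \eqref{inf div sum} (size biasing a sum by biasing a single summand) enters instead.
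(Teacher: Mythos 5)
Your argument is correct in substance, but it takes a genuinely different route from the paper's in the forward direction, and the difference matters. The paper proves ``infinitely divisible $\Rightarrow$ \eqref{indep}'' from scratch: it writes $X =^d X_1^{(n)}+\cdots+X_n^{(n)}$, applies the size-bias-one-summand identity to get $X^* =^d (X-X_1^{(n)})+(X_1^{(n)})^*$ as in \eqref{inf div sum}, shows $X_1^{(n)}\to 0$ in probability, gets tightness of $(X_1^{(n)})^*$ from the stochastic domination $\p((X_1^{(n)})^*>K)\le\p(X^*>K)$, and extracts $Y$ by Helly selection. Nothing beyond the definition of infinite divisibility is used; indeed the L\'evy representation (Theorem \ref{thm inf div and Levy}) is then \emph{derived from} \eqref{indep}. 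Your forward direction runs the other way: you take the L\'evy--Khintchine formula for finite-mean subordinators as given and differentiate it to read off $\L(Y)=\frac1a(b\,\delta_0+x\,\nu(dx))$. That is valid if you import the classical theorem from outside, but note that citing Theorem \ref{thm inf div and Levy} for it would be circular within this paper, since that theorem is a corollary of the one you are proving; and it concedes exactly the heavy machinery that the tightness/Helly argument is designed to avoid. Your converse and the paper's are essentially the same computation (integrate the logarithmic derivative of the transform and recognize a compound-Poisson limit), transposed from characteristic functions to Laplace transforms; both ultimately rest on the standard fact that a legitimate subordinator exponent, divided by $n$, is again the transform of a probability law on $[0,\infty)$.

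Two features of your write-up are genuine improvements worth keeping. First, the Laplace transform of a nonnegative variable is automatically strictly positive, so your uniqueness step $L_Y=L_{X^*}/L_X$ is immediate, whereas the paper must separately argue that $\phi(u)\ne 0$ for all $u$ before dividing in \eqref{char fns}. Second, your direct proof that $\p(Y\ge 0)=1$ from \eqref{indep} alone --- using $\p(X^*<m)=\frac1a\e(X\,1(X<m))=0$ for $m=\inf\operatorname{supp}(X)$ and then independence --- is cleaner than obtaining nonnegativity only as a by-product of the Helly limit, and it is needed anyway to make $L_Y$ finite before the transform argument can start. Do make sure to justify differentiation under the integral sign for $L_X'(s)=-\e(Xe^{-sX})$ on $s>0$ (dominated convergence using $\e X<\infty$) and to handle $s=0$ by continuity; these are routine but should be said.
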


Remark: In \cite{steutel} (see also \cite{steutel70}), F. Steutel
  shows that a cumulative distribution function $F$ on $[0,\infty)$ is
  infinitely divisible iff it satisfies
$$
\int_0^x u dF(u) = \int_0^x F(x-u) dK(u)
$$
for a non-decreasing $K$.  Our decomposition \eqref{indep} is clearly
a consequence of his integral formula, though he does not use the
language of size biasing --he does not, in fact, assume that $F$ has finite
mean--
and his proof proceeds by way of the Levy representation formula, which we will
derive instead as a corollary of  \eqref{indep}.  Steutel's result is also presented in 
Sato \cite{sato}, Theorem 51.1, as well as in the book \cite{steutel-book} by Steutel and van Harn.

\begin{proof}
We begin by assuming that $X$ is infinitely divisible, which by definition means that 
 for each $n$ there exists a distribution
such that if $X_1^{(n)},\ldots,X_n^{(n)}$ are i.i.d. with this distribution, then
\begin{equation}\label{inf div}
X \indist X_1^{(n)}+\cdots + X_n^{(n)}.
\end{equation} 
Then 
by (\ref{iid})%, we can make a coupling so that $X= X_1^{(n)}+\cdots +
%X_n^{(n)}$ with independent summands, yielding  $(X-X_1^{(n)})= X_2^{(n)}+\cdots +
%X_n^{(n)}$ and
\begin{equation}\label{inf div sum}
X^* =^d (X-X_1^{(n)}) + (X_1^{(n)})^*,
\end{equation}
with $X-X_1^{(n)}$ and $(X_1^{(n)})^*$ independent.

It is obvious that, with probability 1,  $X_1^{(n)} \ge 0$, since \eqref{inf div}
gives
$ (\p(X_1^{(n)}<0))^n  \le \p(X<0)=0 $.
Next, $\e |X_1^{(n)}| = \e X_1^{(n)} =a/n \to 0$ as $n \to \infty$
implies that $X_1^{(n)} \rightarrow 0$
in $L_1$ and hence in probability. Hence
$X-X_1^{(n)}$
converges in distribution to $X$.

Next, the family of random variables $(X_1^{(n)})^*$ is \emph{tight},
because 
given $\epsilon >0$, there is a $K$ such that $\p(X^* >K)<\epsilon$, and
by
(\ref{inf div sum}), for
all $n$, $\p((X_1^{(n)})^*>K) \leq \p(X^*>K)$.  Thus, by Helly's theorem,
there exists a
subsequence $n_k$ of the $n$'s along which $(X_1^{(n)})^*$ converges in distribution,
say $(X_1^{(n_k)})^* \todist Y$.
As  $n \rightarrow \infty$ along this subsequence, the pair  $(X-X_1^n,(X_1^n)^*)$
converges jointly to the pair $(X,Y)$ with $X$ and $Y$ independent.  From
$X^* =^d (X-X_1^{(n_k)}) + (X_1^{(n_k)})^* \todist X+Y$ as $k \rightarrow \infty$
we conclude that $X^* \indist X+Y$, with $Y \geq 0$, and  $X,Y$
independent.  This completes the proof that if $X$ is infinitely
divisible, then it satisfies \eqref{indep}.  

That the law of $Y$ in \eqref{inf div} is \emph{unique} requires a
little work; we will need to know that   the characteristic function $\phi$ for $X$ satisfies $\phi(u) \ne 0$ for
all real $u$.  Once we have this,   
uniqueness is easy:  from \eqref{phi'} and 
\eqref{indep}, writing $\phi_Y$ for the characteristic function of
$Y$, we have two expressions for $\phi_{X^*}(u)$, hence
\begin{equation}\label{char fns}
  \frac{1}{i \, \e X} \ \phi'(u)   = \phi(u) \
  \phi_Y(u).
\end{equation}
This determines $\phi_Y(u)$, provided we know that $\phi(u) \ne 0$.

The characteristic function of \emph{any} infinitely divisible $X$ has
$\phi(u) \ne 0$ for all $u$: 
 Feller \cite[p. 500 and pp. 555--557]{feller2}, %gives a clever elementary proof, 
and Chung \cite[Theorem 7.6.1]{chung}, %gives a shorter proof.  
give straightforward proofs. However, under the hypothesis that 
  \eqref{indep} holds and $\e X$ is finite, there
is a simpler proof, as follows.
\emph{
Suppose that $\phi(u) \ne 0$ for all $u \in (-t,t)$, for some $t>0$.
From equation \eqref{char fns}, for  $u \in (-t,t)$
$$
  (\log \phi(u))' = \frac{ \phi'(u)}{ \phi(u)} = i \, \e X \ \phi_Y(u) , \text{
  hence } |(\log \phi(u))'| \le \e X.
$$
Since $\phi$ is continuous with $\log \phi(0)=0$, it follows that for
all  $u \in [-t,t]$, $|\log \phi(u)| \le  t \e X < \infty$.  If it
were the case that $\phi(u)=0$ for any $u$, we could take $t = \inf \{
|u|: \phi(u)=0 \} < \infty$ to get a contradiction.\footnote{As to the
  validity of taking logarithm, log continues
  uniquely along paths avoiding zero; see, e.g., \cite[pp. 554--5]{feller2}, and  \cite[p. xv line -7 and Thm. 7.6.2]{chung}.} }
  
Finally, we prove the converse statement, that \eqref{indep} implies infinite divisibility.
Starting
with the assumption \eqref{indep}, we have \eqref{char fns}, which
--- with details given in the next section ---
lets us solve for $(\log \phi(u))'$, and integrate, to get
\eqref{comp pois 1} 
below. 
That \eqref{comp pois 1} is the characteristic
function of an infinitely divisible distribution is well-known, but to
review, for the sake of a self-contained proof: the function in
\eqref{comp pois 1}
can be expressed as the limit of characteristic functions
of random variables with compound Poisson distribution, 
as in \eqref{comp pois phi}, and scaling
all the Poisson parameters down by a factor of $n$, and then taking
the limit, we get the distribution for 
 the $n^{\rm th}$ convolutional root  $X_1^{(n)}$
for use in \eqref{inf div}.
\end{proof}

\subsection{The \levy representation}

We continue to work with an $X \ge 0$ with 
$a := \e X \in (0, \infty)$, assuming also that $X$ is  infinitely divisible, or
equivalently, that $X$ satisfies \eqref{indep}.  Using \eqref{char fns}, 
\begin{equation}\label{log deriv}
(\log \phi(u))' =  \frac{\phi'(u)}{\phi(u)} = a \, i \ \phi_Y(u)
\end{equation}
and since $\phi(0)=1$ with $\log \phi(0)=0$, we get
$$
  \log \phi(u) = a \, i \ \int_{t=0}^u  \phi_Y(t) \ dt.
$$
Let $\alpha$ be the distribution of $Y$ in \eqref{inf div}, so $\alpha$ is a
probability measure on $[0,\infty)$.   We have
$$ 
\int_{t=0}^u  \phi_Y(t) \ dt = \int_{t=0}^u \int_y e^{ity} \alpha(dy) \ dt=
\int_y \int_{t=0}^u e^{ity} \ dt \ \alpha(dy) 
$$
with the interchange justified by Fubini.
We have 
$$ 
\int_{t=0}^u e^{ity} \ dt = \left\{
\begin{array}{ll}   
(e^{iuy}-1)/(iy)  & \text{ if } y>0 \\
 u  & \text{ if } y=0
 \end{array}
 \right.
$$
 Combining the three previous displayed equations, the characteristic function $\phi$ for $X$ may be expressed as
\begin{equation}\label{comp pois 1}
 \phi(u) =\exp \left( a \left( i u \,  \alpha(\{0\}) +\  \int_{(0,\infty)} \frac{e^{iuy}-1}{y} \
   \alpha(dy) \right) \right).
\end{equation}
To review,  $a \in (0,\infty)$, $\alpha$ is the probability distribution of a nonnegative random variable $Y$, and $\phi(u)$ is the characteristic function of a random variable $X$, with $a=\e X$, and, with $X,Y$ independent, $X^*=^d X+Y$.   We have derived  \eqref{comp pois 1} %this
 under the assumption that \eqref{indep} holds.  However, given $a \in (0,\infty)$, and a probability distribution for a nonnegative random variable $Y$, it can be seen that 
\eqref{comp pois 1} is the characteristic function of a random variable $X$, by taking distributional limits of the discrete compound Poisson
sums in \eqref{comp pois phi}. Then, working back through \eqref{log deriv}, one sees easily that $\e X = a$ 
and, with $X,Y$ independent, $X^*=^d X+Y$.

The calculation above, combined with Theorem \ref{thm inf div}, is
summarized in the next theorem.

\begin{theorem}\label{thm inf div and Levy}
Suppose $X$ can be size biased, i.e., $X \ge 0$ and $a := \e X \in
(0,\infty)$.  If $X$ is infinitely divisible, then there exists a
%unique
distribution for $Y$ %$Y \ge 0$ such that with $X,Y$ independent,
such that
\[
%X^*=^dX+Y, \ \ Y \geq 0, \ \ \ \mbox{ and } X,Y \mbox{ are independent}.
X^*=^dX+Y,  \ \ \mbox{ and } X,Y \mbox{ are independent}.
\] 
Conversely, given that $X$ can be size biased, and that  \eqref{indep}
holds for some $Y$, then $X$ is infinitely divisible.

In either case,  the distribution of $Y$ is
unique, $\p(Y \ge 0)=1$, and $X$ has characteristic function given by  \eqref{comp pois 1}.
\end{theorem}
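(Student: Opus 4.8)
The plan is to obtain this theorem by bolting the explicit computation of the \levy representation onto Theorem~\ref{thm inf div}, which already does the bulk of the work. Indeed, Theorem~\ref{thm inf div} delivers everything in the statement except the characteristic-function formula \eqref{comp pois 1}: the equivalence between infinite divisibility of $X$ and the existence of an independent $Y$ with $X^* =^d X+Y$, the uniqueness of the law of $Y$, and the fact that $\p(Y\ge 0)=1$. So only the identity \eqref{comp pois 1} remains, and since ``either case'' of the statement is, by Theorem~\ref{thm inf div}, the single hypothesis \eqref{indep}, it suffices to show that \eqref{indep} forces $\phi_X$ to have the form \eqref{comp pois 1}, and conversely that \eqref{comp pois 1}, for arbitrary $a\in(0,\infty)$ and arbitrary probability law $\alpha$ on $[0,\infty)$, is the characteristic function of some $X$ with $\e X=a$ satisfying \eqref{indep}.

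For the forward direction I would start from \eqref{char fns}, the combination of \eqref{phi'} with \eqref{indep}, namely $\phi'(u)/(i\,a) = \phi(u)\,\phi_Y(u)$. Theorem~\ref{thm inf div} guarantees $\phi(u)\ne 0$ for all real $u$, so one may divide and write $(\log\phi(u))' = a\,i\,\phi_Y(u)$ as in \eqref{log deriv}; with the branch of $\log\phi$ normalized by $\log\phi(0)=0$ and continued along $[0,u]$ (a path avoiding $0$), integration gives $\log\phi(u) = a\,i\int_0^u \phi_Y(t)\,dt$. Writing $\phi_Y(t) = \int e^{ity}\,\alpha(dy)$ with $\alpha=\L(Y)$, Fubini swaps the $t$- and $y$-integrals, the inner integral $\int_0^u e^{ity}\,dt$ equals $(e^{iuy}-1)/(iy)$ for $y>0$ and $u$ for $y=0$, and splitting off the atom of $\alpha$ at $0$ yields exactly \eqref{comp pois 1}. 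This is routine once the non-vanishing of $\phi$ is in hand, and that non-vanishing is the one genuinely delicate point; but it has already been proved in the proof of Theorem~\ref{thm inf div}, under the standing assumption $\e X<\infty$, via the bound $|(\log\phi(u))'|\le \e X$ and a minimal-$|u|$ contradiction, so I would simply invoke it.

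For the converse I would run the same computation backwards. Given $a\in(0,\infty)$ and a probability measure $\alpha$ on $[0,\infty)$, the right-hand side of \eqref{comp pois 1} is a pointwise limit of compound-Poisson characteristic functions of the shape \eqref{comp pois phi}: approximate $\alpha$ by finitely supported measures and absorb the $1/y$ weighting into the Poisson intensities $\lambda_k$, exactly as in the compound-Poisson construction of Section~\ref{sect basics}. Since the limit is continuous at $0$, by \levy's continuity theorem it is the characteristic function of some random variable $X$; differentiating $\log\phi$ as in \eqref{log deriv} recovers $\e X=a$, and reading \eqref{char fns} in the other direction gives $X^* =^d X+Y$ with $Y\sim\alpha$ independent of $X$, i.e.\ \eqref{indep}. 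Theorem~\ref{thm inf div} then upgrades \eqref{indep} to infinite divisibility of $X$, closing the loop. Assembling the three ingredients --- Theorem~\ref{thm inf div} for the equivalence, uniqueness, and nonnegativity of $Y$; the integration argument for the shape of $\phi$; and the compound-Poisson limit for realizability of \eqref{comp pois 1} --- yields the full statement, and I expect the only real obstacle to be the already-handled fact that $\phi$ never vanishes.
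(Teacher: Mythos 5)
Your proposal follows the paper's proof essentially verbatim: the paper too simply invokes Theorem~\ref{thm inf div} for the equivalence, uniqueness of $\L(Y)$, and nonnegativity, and obtains \eqref{comp pois 1} by the same integration of \eqref{log deriv} (Fubini, split off the atom at $0$), with the converse realizability of \eqref{comp pois 1} for arbitrary $(a,\alpha)$ justified by the same compound-Poisson limiting argument. The one delicate point — that $\phi$ never vanishes, so that $\log\phi$ may be taken along $[0,u]$ — you correctly attribute to the argument inside the proof of Theorem~\ref{thm inf div}, exactly as the paper does.
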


\begin{corollary}\label{cor poisson}
If $X$ is a nonnegative random variable with $\lambda := \e X \in (0,\infty)$, and
$X^* =^d X+1$, then $X$ is Poisson($\lambda$).
\end{corollary}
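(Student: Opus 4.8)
The plan is to apply Theorem \ref{thm inf div and Levy} directly, with the trivial choice $Y \equiv 1$. First I would check that the hypotheses are met: $X$ can be size biased since $X \ge 0$ and $\e X = \lambda \in (0,\infty)$, and the deterministic random variable $Y \equiv 1$ is (trivially) independent of $X$, so the assumption $X^* =^d X+1$ is precisely \eqref{indep} with this $Y$. Hence the converse half of Theorem \ref{thm inf div and Levy} applies: $X$ is infinitely divisible, and its characteristic function is given by \eqref{comp pois 1} with $a = \lambda$ and $\alpha = \delta_1$, the unit point mass at $1$.

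Next I would simply evaluate \eqref{comp pois 1} for $\alpha = \delta_1$. Since $\alpha(\{0\}) = 0$ and $\int_{(0,\infty)} \frac{e^{iuy}-1}{y}\, \delta_1(dy) = e^{iu}-1$, this gives
$$
  \phi(u) = \exp\bigl(\lambda(e^{iu}-1)\bigr),
$$
which is the characteristic function of the Poisson($\lambda$) distribution. Since characteristic functions determine distributions, $X$ is Poisson($\lambda$), as claimed.

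There is essentially no obstacle here; the work has all been done in the development leading up to Theorem \ref{thm inf div and Levy}. The only point worth remarking is that the uniqueness assertion of that theorem is not needed --- one needs only the existence of the \levy representation \eqref{comp pois 1} in the converse direction, specialized to a constant $Y$. Alternatively, one could bypass \eqref{comp pois 1} entirely and argue from \eqref{char fns}: with $\phi_Y(u) = e^{iu}$, that identity reads $\phi'(u) = i\lambda e^{iu}\phi(u)$, i.e. $(\log\phi(u))' = i\lambda e^{iu}$, which integrates, using $\phi(0)=1$, to $\phi(u) = \exp(\lambda(e^{iu}-1))$ --- the same conclusion.
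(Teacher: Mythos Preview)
Your proof is correct and is exactly the argument the paper intends: the corollary is stated immediately after Theorem \ref{thm inf div and Levy} with no separate proof, precisely because taking $Y\equiv 1$ in \eqref{indep} and reading off \eqref{comp pois 1} with $\alpha=\delta_1$ yields the Poisson($\lambda$) characteristic function. Your alternative route via \eqref{char fns} is also fine and amounts to the same computation.
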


A natural way to rewrite \eqref{comp pois 1}, motivated perhaps by the two expressions in
\eqref{comp pois phi}, is to absorb the $1/y$ into the measure $\alpha(dy)$.   Writing $\alpha_0$ for the constant $\alpha(\{0\}) = \p(Y=0)$ in 
\eqref{comp pois 1}, this gives
 \begin{equation}\label{comp pois 2}
 \phi(u)  =
\exp \left( a \left( i u \alpha_0+  \int_{(0,\infty)} \left(e^{iuy}-1 \right) \  \gamma(dy) \right) \right) .
\end{equation}
Here $\gamma$ is a nonnegative measure on $(0,\infty)$, with $\gamma(dy) /\alpha(dy) = 1/y$, and this allows a \emph{broader} class than
\eqref{comp pois 1}.  To get $\e X < \infty$, there is the additional requirement that $\int_{(0,\infty)} y \, \gamma(dy) < \infty$   --- this is the price one pays for being able to size bias.   Regardless of whether $\e X = \infty$ or $\e X < \infty$,   the nonnegative measure $\gamma$ can have infinite mass, due to mass near zero, and 
the  requirement, to get a nonnegative infinitely divisible $X$, allowing $\e X= \infty$,  is that $\int_{(0,\infty)} (1 \wedge y) \gamma(dy) < \infty$.  Examples \ref{ex dickman} and \ref{ex buchstab} illustrate this, where,  %  changed it for Larry, but upon rereading, changed it back    where, 
in both cases, $\alpha$ is a uniform distribution 
on an interval, and $\e X < \infty$.

    We read \eqref{comp pois 2} as:  the random variable $X$ is the constant $a \, \alpha_0$, plus the sum of arrivals, in the Poisson process on $(0,\infty)$ with intensity measure $a \, \gamma$.
Formula \eqref{comp pois 2}\footnote{Or any of its cousins, such as the Laplace transform version --- since the characteristic function $\phi(\cdot)$, moment generating function $M(\cdot)$, and Laplace transform $L(\cdot)$ are essentially the same, in detail  $\phi(u):= \e e^{iuX}$, $M(\beta):= \e e^{\beta X}$, $L(t) := \e e^{-tX}$, allowing the formal substitutions $iu = \beta = -t$.} is called the L\'evy--Khintchine formula in the survey paper on subordinators \cite{bertoin-sub},  the one difference being that the random variable $X$ representing the value of the subordinator at time $a$ is also allowed to have $\p(X=\infty) = 1-\exp(-ka)>0$, where $k$ is called the  \emph{killing rate}.\footnote{And when there is killing, then the Laplace transform \emph{is preferable} to the characteristic function; see the previous footnote.}
 
\ignore{
\levy \emph{canonical measures} include the probability distribution $\alpha$, and the pair $(\alpha_0,\gamma)$ of \eqref{comp pois 1}
and \eqref{comp pois 2} as the \emph{special case} where $X \ge 0$ and $\e X < \infty$;  our goal was to show how natural and easy
this subject is, under these additional restrictions, thanks to size bias.  For the general case, we refer to
\cite{applebaum,bertoin,sato}.  
} 
 
%\section{Given $X^*=X+Y$ with independence: the size bias equation}
\subsection{The size bias equation}

When $X,Y$ are both  discrete or both absolutely continuous, it is worth highlighting
how \eqref{sizebias-f}, together with \eqref{indep}, %directly gives 
yields a simple relation satisfied by 
the mass functions or densities.   Sato \cite{sato} Section 51,
especially Corollary 51.2, already highlights these relations, though
of course without referring to them as being size bias relations.

In the discrete case, if \eqref{indep} holds,
then %so that 
$f_{X^*}$ is the convolution
of $f_X$ and $f_Y$: \ $f_{X^*}(x) = \sum_y f_X(x-y) f_Y(y)$, 
and
combining with
(\ref{sizebias-f}) yields, for all $x>0$,
\begin{equation}\label{discrete convolve}
f_X(x) = \frac{a}{x} \sum_y f_X(x-y) f_Y(y).
\end{equation}
A common special case is that $Y$ is supported on the positive
integers, and $X$ on the nonnegative integers, so that 
considering $f_Y$ as known, and $f_X$  to be found, 
the homogeneous system
\eqref{discrete convolve} specifies a recursion:  starting from
$f_X(0)=c$, for $m=0,1,2,\ldots$,
\begin{equation}\label{discrete convolve recursion}
     f_X(m+1) = \frac{a}{m+1} \sum_{0 \le i \le m} f_X(i) f_Y(m+1-i),
\end{equation}
and the initial value $c$ is determined by $1= \sum_{i \ge 0}  f_X(i)$.
Furthermore,  from  \eqref{comp pois phi} and  \eqref{comp pois 1} we know
that $X = \sum_{i \ge 1} i Z_i$ with $Z_i$ independent
Poisson($\lambda_i)$, $\lambda := \sum \lambda_i < \infty$,
$f_Y(i) = i \lambda_i /a$, 
hence $f_X(0)= P(Z_1=Z_2=\cdots =0)=e^{-\lambda}$.
The relation \eqref{discrete convolve recursion} was used in \cite{at92}, where it was referred to as a result from
\cite{pour}.  The situation with $X = \sum_{1 \le i \le n} i Z_i$ with $Z_i$ independent
Poisson($\lambda_i)$ is universal to \emph{combinatorial assemblies}; here $X$ is usually denoted as $T_n$, and conditional on the event $(T_n=n)$ one has a labelled combinatorial object of total size $n$, in which there are $Z_i$ components of size $i$, jointly for $i=1$ to $n$.  See \cite{IPARCS,abt}. 
 
Likewise, in the absolutely continuous case, where $X$ and $Y$ have
densities, if \eqref{indep} holds, then
$f_{X^*}$ is the convolution
of $f_X$ and $f_Y$: \ $f_{X^*}(x) = \int_y f_X(x-y) f_Y(y) \ dy$.  Combined with
(\ref{sizebias-f}), this says that for all $x>0$,
\begin{equation}\label{cont convolve}
f_X(x) = \frac{a}{x} \int_y f_X(x-y) f_Y(y) \ dy .
\end{equation}

\subsection{Examples of infinitely divisible distributions for nonnegative random variables}

Of course, the \levy representation \eqref{comp pois 2} yields \emph{all} examples of nonnegative infinitely divisible distributions.    However,
\emph{recognizing} when a given distribution for $X$ takes the form \eqref{comp pois 1} 
or \eqref{comp pois 2} remains a nontrivial problem.   
We present our favorite examples in which Theorem \ref{thm inf div}
provides a convenient criterion, and we will use the notation from
Theorem \ref{thm inf div}, in particular \eqref{indep}.

\subsubsection{Discrete examples}

\bex\label{ex poisson}
$\p(Y=1)=1$;  $X$ is Poisson(a).
\eex

\bex\label{ex geometric}
For $p \in (0,1)$,  $\p(Y=k)= (1-p)^k/k$.
When $a=(1-p)/p$,   $X$ is geometric, with $\p(X=n)=(1-p)^n p, n\ge 0$.  When $ap/(1-p)$ is a positive integer,  $X$ is negative binomial.   

The infinite divisibility of geometric and negative binomial
distributions plays a key role in estimates comparing logarithmic
combinatorial structures with their limits; see \cite{abt}. The
compound Poisson representation of the geometric is the starting point
for a coupling, in \cite{budalect}, showing that a random integer may
be chosen uniformly from 1 to $n$, on the same probability space with
a Poisson-Dirichlet process $(L_1,L_2,\ldots)$, so that if $P_i$ is
the $i^{\rm th}$ largest factor of the random integer,\footnote{with the convention that $P_i=1$ when $i$ exceeds the number of prime factors, including multiplicities} then $\e \sum_{i \ge 1} |\log P_i - (\log n) L_i| = O(\log \log n)$.
This construction is analogous to Skorohod embedding: it starts with the continuum limit process --Poisson-Dirichlet instead of  Brownian motion-- and constructs the nearby (in the limit)  
discrete random object --the random integer expressed as a product of primes instead of a 
random walk-- as a deterministic function of the continuum limit process, together with a small amount of auxiliary randomization. 
\eex

A necessary and sufficient condition for a nonnegative integer valued random variable to be infinitely divisible is given in \cite{katti1}, and   
a useful \emph{sufficient} condition is given in \cite{katti2}. 
The sufficient condition is \emph{log-convexity}: the support of $X$ is the nonnegative integers, and  for all $n \ge 1$,
$\p(X=n-1) \p(X=n+1) \ge \p(X=n)^2$.    Example \ref{ex poisson} shows that the sufficient condition of log-convexity is not necessary --- any Poisson distribution is log-\emph{concave}, rather than log-\emph{convex}.
See \cite{alw} for a discussion of how the sufficiency of log-convexity is 
perhaps attributable to Kaluza, \cite{kaluza}.   Of course, for any constant $c$, $X$ is infinitely divisible if, and only if, $c+X$ is infinitely divisible;  this remark is often used with $c=\pm 1$.
There are several famous discrete distributions that can be seen to be infinitely divisible via log-convexity; some
 examples of this type are given in \cite{katti2}, and two of our favorite examples are the following:

\bex The zeta distributions: For  $s>0$, $ \p(X=n) = n^{-s}/\zeta(s)$,  $n\ge 1$. 
\eex

\bex  The simplest power law, $\p(X \ge n) = 1/n$ for $n \ge 1$.
%{\bf [Note to authors, check out other powers.]}
\eex

\subsubsection{Continuous examples}

% {\bf NOTE to authors,  add the standard stable subordinator,  Bertion LNM1717, page 7. }

\bex  $Y$ is exponential, with $\p(Y>t)=e^{-t}$ for $t \ge 0$.   When $a=1$, $X=^d Y$, and $X^* =^d X+Y$ is the sum of two independent copies of $X$, as observed in Section \ref{sec:renewal}  on the waiting time paradox. For positive integers $a$, $X$ is the time of the $a^{\rm th}$ arrival in a standard Poisson process.  For general $a > 0$, $X$ has the Gamma distribution, with shape parameter $a$.
\ignore{
\footnote{Scaling is a trivial change, but a source of much confusion:  taking $X/r$ for large $r$ yields a smaller value of  $X$,  and the parameter $r$ is called the arrival rate; other authors take $cX$, so that  $\e X = ac$, and the parameter $c$ is the mean of $Y$.  But alas, either $r$ or $c$ might be called \emph{the scale parameter} of the corresponding Gamma distribution.}
} 

In the \levy representation \eqref{comp pois 2} for the characteristic function of the Gamma random variable $X$,  we have  $\gamma(dy) = e^{-y}/y \ dy$.  This measure $\gamma$, or the increasing process it governs, is also known as the Moran subordinator, and used to construct the Poisson-Dirchlet process;  see \cite{kingman}.
\eex

\bex\label{ex pareto}
Pareto distributions, of the form $\p(X>t) = (1+t)^{-\alpha}$, $\alpha>0$.  

This is the example for which Thorin \cite {thorin-pareto} first developed his theory of generalized Gamma convolutions, which is a subclass of the infinitely divisible distributions for positive random variables.  See \cite{bondesson}, as well as  \cite {bondesson-thorin}.
\eex

\bex\label{ex lognormal}

The  lognormal distributions.  Again, this is from Thorin in 1977, \cite{thorin}, and his proof is based on a generalized Gamma convolution.  
 \eex

\bex  Distributions with a log-convex density.  

Taking limits of discrete distributions on the nonnegative integers with log-convex pointmass function, Sato \cite[Theorem 5.1.4]{sato}  shows that if $X$ has a density $f$ on $(0,\infty)$, 
such that $\log f$ is  convex on $(0,\infty)$, then $X$ is infinitely divisible.  This also shows that the Pareto distributions are infinitely divisible!
\eex

The next two examples, Examples \ref{ex dickman} and \ref{ex buchstab}, arise by taking $Y$ in \eqref{indep} to be uniformly distributed on a bounded interval of nonnegative numbers.  Up to scaling, \emph{any} such $Y$  is %like 
either uniformly distributed on $(0,1)$, or else on $(b,1)$ for some $0<b<1$.   In the former case,
$X$ has an absolutely continuous distribution,  and the latter case the distribution of $X$
has an atom and an absolutely continuous part.

\bex\label{ex dickman}  %A probability density proportional to Dickman's function $\rho$.  
$Y$ is uniform $(0,1)$, leading to Dickman's function $\rho$, and its convolution powers.

In \eqref{indep}, take $Y$ to be the standard uniform random variable
on $(0,1)$.  Then
 \eqref{comp pois 1} specializes to 
\begin{equation}\label{comppois dickman}
 \phi_X(u) =\exp \left( a \  \int_0^1 \frac{e^{iuy}-1}{y} \
   dy \right),
\end{equation}
and \eqref{cont convolve} specializes to
\begin{equation}\label{dickman int}
  f_X(x) = \frac{a}{x} \int_{y=0}^1 f_X(x-y) \ dy
= \frac{a}{x} \int_{x-1}^x f_X(z) \ dz.
\end{equation}

Here as always, $a= \e X$;  the choice $a=1$ yields
$f_X(x)=e^{-\gamma}\rho(x)$, where $\rho$ is Dickman's function,
of central importance in the study of integers
without large prime factors; see \cite{tenenbaum} and  \cite[Section 4.2]{abt}.  For the general
case $a \in (0,\infty)$,  the density $f_X$ is a  ``convolution power of Dickman's function,''
normalized to be a probability density;  see \cite{hensley} .
\eex

\bex \label{ex buchstab}   $Y$ is uniform $(b,1)$ for $0<b<1$, leading to 
Buchstab's function $\omega$, and the limit probability for \emph{logarithmic}
structures to have all parts in a range excluding small parts, or both small and large parts.   

Now
\eqref{comp pois 2} becomes
\begin{equation}\label{comppois buchstab}
 \phi_X(u) =\exp \left( \frac{a}{1-b} \  \int_b^1 \frac{e^{iuy}-1}{y} \
   dy \right),
\end{equation}
with $0 < b <1$.  
Unlike Example \ref{ex dickman},  $X$ is no longer absolutely
continuous, since $\p(X=0)=b^{ a/(1-b)}>0$.

This computation of $\p(X=0)$ is easy to understand, by viewing 
  \eqref{comp pois 2} as the specification that
$X$ is the sum of the arrivals in the Poisson process with
arrival intensity measure $ a \, \gamma$, where  $a \gamma(dy)=  a/(1-b) \ 1(b < y <
1) \ dy/y$. The expected number of arrivals in this Poisson process
 is $\lambda = \int_b^1  a/(1-b) \
dy/y$, and of course $\p(X=0)=e^{-\lambda}$.    See \cite{scale-invariant}.
%{\bf NOTE TO AUTHOR   add in the size bias eqn, since the type is mixed}

 The size bias squation, which was  \eqref{dickman int} for the case $b=0$,  is more complicated with $0<b<1$: the distribution of $X$ has pointmass $b^{ a/(1-b)}>0$, and a defective density $f_X$ whose support is $\cup_{k \ge 1} [kb,k]$.  The size bias equation 
 obtained by combining \eqref{sizebias-f} with \eqref{indep} takes the form:  for $x>0$,
\begin{equation}\label{buchstab int}
  f_X(x) =  \frac{a}{x} \left( b^{ a/(1-b)} \, \frac{1(b<x<1)}{1-b} +  \int_{y=b}^1 \frac{f_X(x-y)}{1-b} \ dy \right).
\end{equation}

We briefly explain the natural importance of
Example \ref{ex buchstab}.
 Let  $f_X^{(b)}$ be the density of 
  $X$, for   $0<b<1$ and $a=1-b$.  This density arises in the study of random permutations; see
   \cite[Section 4.3]{abt}.    Directly, $f_X^{(b)}(1)$ governs the asymptotic probability that a random permutation of $n$ objects has only cycles of length at least $bn$.
  Scale invariance also leads, for fixed $b \in (0, 1)$, to $f_X^{(b)}(u)$ governing the asymptotic probability that a random permutation on $n$ objects has only cycles with lengths in $(bn/u, n/u)$, for any $u > 1$.
  Scale invariance also leads to $\omega(u) = f_X^{(1/u)}(1)$, with Buchstab's function $\omega$ governing integers free of small prime factors;  see \cite{tenenbaum}.
       \eex

\bibliographystyle{plain}
\bibliography{csbbib}

\begin{thebibliography}{10}

\bibitem{AbrahamDelmas}
Romain Abraham and Jean-Fran{\c{c}}ois Delmas.
\newblock Local limits of conditioned {G}alton-{W}atson trees: the infinite
  spine case.
\newblock {\em Electron. J. Probab.}, 19:no. 2, 19, 2014.

\bibitem{aldous-pgw}
David Aldous.
\newblock Tree-valued {M}arkov chains and {P}oisson-{G}alton-{W}atson
  distributions.
\newblock In {\em Microsurveys in discrete probability ({P}rinceton, {NJ},
  1997)}, volume~41 of {\em DIMACS Ser. Discrete Math. Theoret. Comput. Sci.},
  pages 1--20. Amer. Math. Soc., Providence, RI, 1998.

\bibitem{applebaum}
David Applebaum.
\newblock L\'evy processes and stochastic calculus, 2009.

\bibitem{budalect}
R.~Arratia.
\newblock On the amount of dependence in the prime factorization of a uniform
  random integer.
\newblock In {\em Contemporary combinatorics}, volume~10 of {\em Bolyai Soc.
  Math. Stud.}, pages 29--91. J\'anos Bolyai Math. Soc., Budapest, 2002.

\bibitem{agk}
R.~Arratia, S.~Garibaldi, and J.~Killian.
\newblock Asymptotic distribution for the birthday problem with multiple
  coincidences, via an embedding of the collision process.
\newblock {\em Random Structures and Algorithms}, 2015.

\bibitem{scale-invariant}
Richard Arratia.
\newblock On the central role of scale invariant {P}oisson processes on
  {$(0,\infty)$}.
\newblock In {\em Microsurveys in discrete probability ({P}rinceton, {NJ},
  1997)}, volume~41 of {\em DIMACS Ser. Discrete Math. Theoret. Comput. Sci.},
  pages 21--41. Amer. Math. Soc., Providence, RI, 1998.

\bibitem{abt}
Richard Arratia, A.~D. Barbour, and Simon Tavar{\'e}.
\newblock {\em Logarithmic combinatorial structures: a probabilistic approach}.
\newblock EMS Monographs in Mathematics. European Mathematical Society (EMS),
  Z\"urich, 2003.

\bibitem{arratiagamma}
Richard Arratia and Peter Baxendale.
\newblock Bounded size bias coupling: a gamma function bound, and universal
  dickman-function behavior.
\newblock {\em Probability Theory and Related Fields}, pages 1--19, 2014.

\bibitem{alw}
Richard Arratia, Thomas~M. Liggett, and Malcolm~J. Williamson.
\newblock Scale-free and power law distributions via fixed points and
  convergence of (thinning and conditioning) transformations.
\newblock {\em Electron. Commun. Probab.}, 19:no. 39, 10, 2014.

\bibitem{at92}
Richard Arratia and Simon Tavar{\'e}.
\newblock The cycle structure of random permutations.
\newblock {\em Ann. Probab.}, 20(3):1567--1591, 1992.

\bibitem{IPARCS}
Richard Arratia and Simon Tavar\'{e}.
\newblock Independent process approximations for random combinatorial
  structures.
\newblock {\em Advances in Mathematics}, 104:90--154, 1994.

\bibitem{AN}
Krishna~B. Athreya and Peter~E. Ney.
\newblock {\em Branching processes}.
\newblock Springer-Verlag, New York-Heidelberg, 1972.
\newblock Die Grundlehren der mathematischen Wissenschaften, Band 196.

\bibitem{BRS89}
P.~Baldi, Y.~Rinott, and C.~Stein.
\newblock A normal approximation for the number of local maxima of a random
  function on a graph.
\newblock In {\em Probability, statistics, and mathematics}, pages 59--81.
  Academic Press, Boston, MA, 1989.

\bibitem{BR89}
Pierre Baldi and Yosef Rinott.
\newblock On normal approximations of distributions in terms of dependency
  graphs.
\newblock {\em Ann. Probab.}, 17(4):1646--1650, 1989.

\bibitem{BHJ92}
A.~D. Barbour, Lars Holst, and Svante Janson.
\newblock {\em Poisson approximation}, volume~2 of {\em Oxford Studies in
  Probability}.
\newblock The Clarendon Press Oxford University Press, New York, 1992.
\newblock Oxford Science Publications.

\bibitem{BGI}
J.~Bartroff, L.~Goldstein, and \"{U} I\c{s}lak.
\newblock Bounded size biased couplings for log concave distributions and
  concentration of measure for occupancy models. {P}reprint.
\newblock 2013.

\bibitem{berg-discrete}
Christian Berg.
\newblock From discrete to absolutely continuous solutions of indeterminate
  moment problems.
\newblock {\em Arab J. Math. Sci.}, 4(2):1--18, 1998.

\bibitem{berg}
Christian Berg.
\newblock On some indeterminate moment problems for measures on a geometric
  progression.
\newblock {\em J. Comput. Appl. Math.}, 99(1-2):67--75, 1998.

\bibitem{bertoin}
Jean Bertoin.
\newblock {\em L\'evy processes}, volume 121 of {\em Cambridge Tracts in
  Mathematics}.
\newblock Cambridge University Press, Cambridge, 1996.

\bibitem{bertoin-sub}
Jean Bertoin.
\newblock Subordinators: examples and applications.
\newblock In {\em Lectures on probability theory and statistics
  ({S}aint-{F}lour, 1997)}, volume 1717 of {\em Lecture Notes in Math.}, pages
  1--91. Springer, Berlin, 1999.

\bibitem{billingsley}
Patrick Billingsley.
\newblock {\em Probability and measure}.
\newblock Wiley Series in Probability and Mathematical Statistics. John Wiley
  \& Sons Inc., New York, third edition, 1995.
\newblock A Wiley-Interscience Publication.

\bibitem{bondesson}
L.~Bondesson.
\newblock Generalized gamma convolutions and complete monotonicity.
\newblock {\em Probab. Theory Related Fields}, 85(2):181--194, 1990.

\bibitem{bondesson-thorin}
Lennart Bondesson, Jan Grandell, and Jaak Peetre.
\newblock The life and work of {O}lof {T}horin (1912--2004).
\newblock {\em Proc. Est. Acad. Sci.}, 57(1):18--25, 2008.

\bibitem{brown}
Mark Brown.
\newblock Exploiting the waiting time paradox: applications of the size-biasing
  transformation.
\newblock {\em Probab. Engrg. Inform. Sci.}, 20(2):195--230, 2006.

\bibitem{chen}
Louis H.~Y. Chen.
\newblock Poisson approximation for dependent trials.
\newblock {\em Ann. Probability}, 3(3):534--545, 1975.

\bibitem{goldstein-book}
Louis H.~Y. Chen, Larry Goldstein, and Qi-Man Shao.
\newblock {\em Normal approximation by {S}tein's method}.
\newblock Probability and its Applications (New York). Springer, Heidelberg,
  2011.

\bibitem{chihara}
T.~S. Chihara.
\newblock A characterization and a class of distribution functions for the
  {S}tieltjes-{W}igert polynomials.
\newblock {\em Canad. Math. Bull.}, 13:529--532, 1970.

\bibitem{chow-teicher}
Yuan~Shih Chow and Henry Teicher.
\newblock {\em Probability theory}.
\newblock Springer Texts in Statistics. Springer-Verlag, New York, third
  edition, 1997.
\newblock Independence, interchangeability, martingales.

\bibitem{christiansen}
Jacob~Stordal Christiansen.
\newblock The moment problem associated with the {S}tieltjes-{W}igert
  polynomials.
\newblock {\em J. Math. Anal. Appl.}, 277(1):218--245, 2003.

\bibitem{chung}
Kai~Lai Chung.
\newblock {\em A course in probability theory}.
\newblock Academic Press Inc., San Diego, CA, third edition, 2001.

\bibitem{C77}
William~G. Cochran.
\newblock {\em Sampling techniques}.
\newblock John Wiley \& Sons, New York-London-Sydney, third edition, 1977.
\newblock Wiley Series in Probability and Mathematical Statistics.

\bibitem{cook}
Nicholas Cook, Larry Goldstein, and Tobias Johnson.
\newblock Size biased couplings and the spectral gap for random regular graphs.
\newblock {\em Ann. Probab.}, 46(1):72--125, 2018.

\bibitem{COX}
David Cox.
\newblock {\em Selected statistical papers of {S}ir {D}avid {C}ox. {V}ol. {I}}.
\newblock Cambridge University Press, Cambridge, 2005.
\newblock Design of investigations, statistical methods and applications,
  Edited by D. J. Hand and A. M. Herzberg, containing the article ``Some
  sampling problems in technology".

\bibitem{Daley}
D.~J. Daley and D.~Vere-Jones.
\newblock {\em An introduction to the theory of point processes. {V}ol. {II}}.
\newblock Probability and its Applications (New York). Springer, New York,
  second edition, 2008.
\newblock General theory and structure.

\bibitem{Dawson}
Donald Dawson.
\newblock {\em Introductory Lectures on Stochastic Population Systems}.
\newblock 2017.
\newblock From arxiv.org/abs/1705.03781.

\bibitem{Doob}
J.~L. Doob.
\newblock Renewal theory from the point of view of the theory of probability.
\newblock {\em Trans. Amer. Math. Soc.}, 63:422--438, 1948.

\bibitem{durrett}
Rick Durrett.
\newblock {\em Probability: theory and examples}.
\newblock Cambridge Series in Statistical and Probabilistic Mathematics.
  Cambridge University Press, Cambridge, fourth edition, 2010.

\bibitem{feller2}
William Feller.
\newblock {\em An introduction to probability theory and its applications.
  {V}ol. {II}.}
\newblock Second edition. John Wiley \& Sons Inc., New York, 1971.

\bibitem{ghosh2}
Subhankar Ghosh and Larry Goldstein.
\newblock Applications of size biased couplings for concentration of measures.
\newblock {\em Electron. Commun. Probab.}, 16:70--83, 2011.

\bibitem{ghosh}
Subhankar Ghosh and Larry Goldstein.
\newblock Concentration of measures via size-biased couplings.
\newblock {\em Probab. Theory Related Fields}, 149(1-2):271--278, 2011.

\bibitem{penrose}
Larry Goldstein and Mathew~D. Penrose.
\newblock Normal approximation for coverage models over binomial point
  processes.
\newblock {\em Ann. Appl. Probab.}, 20(2):696--721, 2010.

\bibitem{gr06}
Larry Goldstein and Gesine Reinert.
\newblock Total variation distance for {P}oisson subset numbers.
\newblock {\em Ann. Comb.}, 10(3):333--341, 2006.

\bibitem{GR96}
Larry Goldstein and Yosef Rinott.
\newblock Multivariate normal approximations by {S}tein's method and size bias
  couplings.
\newblock {\em J. Appl. Probab.}, 33(1):1--17, 1996.

\bibitem{gordon}
Louis Gordon.
\newblock Estimation for large successive samples with unknown inclusion
  probabilities.
\newblock {\em Adv. in Appl. Math.}, 14(1):89--122, 1993.

\bibitem{grimmett}
Geoffrey~R. Grimmett and David~R. Stirzaker.
\newblock {\em Probability and random processes}.
\newblock Oxford University Press, New York, third edition, 2001.

\bibitem{hh}
Morris~H. Hansen and William~N. Hurwitz.
\newblock On the theory of sampling from finite populations.
\newblock {\em Ann. Math. Statistics}, 14:333--362, 1943.

\bibitem{hensley}
Douglas Hensley.
\newblock The convolution powers of the {D}ickman function.
\newblock {\em J. London Math. Soc. (2)}, 33(3):395--406, 1986.

\bibitem{heyde}
C.~C. Heyde.
\newblock On a property of the lognormal distribution.
\newblock {\em J. Roy. Statist. Soc. Ser. B}, 25:392--393, 1963.

\bibitem{jagers}
Peter Jagers.
\newblock On {P}alm probabilities.
\newblock {\em Z. Wahrscheinlichkeitstheorie und Verw. Gebiete}, 26:17--32,
  1973.

\bibitem{KP}
Karen Kafadar and Philip~C. Prorok.
\newblock Effect of length biased sampling of unobserved sojourn times on the
  survival distribution when disease is screen detected.
\newblock {\em Stat. Med.}, 28(16):2116--2146, 2009.

\bibitem{kallenberg73}
Olav Kallenberg.
\newblock Characterization and convergence of random measures and point
  processes.
\newblock {\em Z. Wahrscheinlichkeitstheorie und Verw. Gebiete}, 27:9--21,
  1973.

\bibitem{kallenberg75}
Olav Kallenberg.
\newblock {\em Random measures}.
\newblock Akademie-Verlag, Berlin, 1975.
\newblock Schriftenreihe des Zentralinstituts f{\"u}r Mathematik und Mechanik
  bei der Akademie der Wissenschaften der DDR, Heft 23.

\bibitem{kallenberg}
Olav Kallenberg.
\newblock {\em Foundations of modern probability}.
\newblock Probability and its Applications (New York). Springer-Verlag, New
  York, second edition, 2002.

\bibitem{kaluza}
Th. Kaluza.
\newblock \"{U}ber die {K}oeffizienten reziproker {P}otenzreihen.
\newblock {\em Math. Z.}, 28(1):161--170, 1928.

\bibitem{Kaplan}
Edward~L. Kaplan.
\newblock Transformations of stationary random sequences.
\newblock {\em Math. Scand.}, 3:127--149, 1955.

\bibitem{katti1}
S.~K. Katti.
\newblock Infinite divisibility of integer-valued random variables.
\newblock {\em Ann. Math. Statist.}, 38:1306--1308, 1967.

\bibitem{Kesten}
Harry Kesten.
\newblock Subdiffusive behavior of random walk on a random cluster.
\newblock {\em Ann. Inst. H. Poincar\'e Probab. Statist.}, 22(4):425--487,
  1986.

\bibitem{kingman}
J.~F.~C. Kingman.
\newblock {\em Poisson processes}, volume~3 of {\em Oxford Studies in
  Probability}.
\newblock The Clarendon Press Oxford University Press, New York, 1993.
\newblock Oxford Science Publications.

\bibitem{leipnik}
R.~Leipnik.
\newblock The lognormal distribution and strong nonuniqueness of the moment
  problem.
\newblock {\em Teor. Veroyatnost. i Primenen.}, 26(4):863--865, 1981.
\newblock Also appeared in J. Prob. Appl. 863--865, 1981.

\bibitem{leipnik91}
Roy~B. Leipnik.
\newblock On lognormal random variables. {I}. {T}he characteristic function.
\newblock {\em J. Austral. Math. Soc. Ser. B}, 32(3):327--347, 1991.

\bibitem{loeve}
Michel Lo{\`e}ve.
\newblock {\em Probability theory}.
\newblock Third edition. D. Van Nostrand Co., Inc., Princeton, N.J.-Toronto,
  Ont.-London, 1963.

\bibitem{lopez}
Marcos L{\'o}pez-Garc{\'{\i}}a.
\newblock Characterization of distributions with the length-bias scaling
  property.
\newblock {\em Electron. Commun. Probab.}, 14:186--191, 2009.

\bibitem{minluk}
Ho~Ming Luk.
\newblock {\em Stein's method for the {G}amma distribution and related
  statistical applications}.
\newblock ProQuest LLC, Ann Arbor, MI, 1994.
\newblock Thesis (Ph.D.)--University of Southern California.

\bibitem{LPP}
Russell Lyons, Robin Pemantle, and Yuval Peres.
\newblock Conceptual proofs of {$L\log L$} criteria for mean behavior of
  branching processes.
\newblock {\em Ann. Probab.}, 23(3):1125--1138, 1995.

\bibitem{LyonsPeres}
Russell Lyons and Yuval Peres.
\newblock {\em Probability on trees and networks}, volume~42 of {\em Cambridge
  Series in Statistical and Probabilistic Mathematics}.
\newblock Cambridge University Press, New York, 2016.

\bibitem{midzuno}
Hiroshi Midzuno.
\newblock On the sampling system with probability proportionate to sum of
  sizes.
\newblock {\em Ann. Inst. Statist. Math., Tokyo}, 3:99--107, 1952.

\bibitem{obloj}
Jan Ob{\l}{\'o}j.
\newblock The {S}korokhod embedding problem and its offspring.
\newblock {\em Probab. Surv.}, 1:321--390, 2004.

\bibitem{pakes96}
Anthony~G. Pakes.
\newblock Length biasing and laws equivalent to the log-normal.
\newblock {\em J. Math. Anal. Appl.}, 197(3):825--854, 1996.

\bibitem{pakes}
Anthony~G. Pakes and Ravindra Khattree.
\newblock Length-biasing, characterizations of laws and the moment problem.
\newblock {\em Austral. J. Statist.}, 34(2):307--322, 1992.

\bibitem{pekoz}
Erol Pek{\"o}z and Adrian R{\"o}llin.
\newblock Exponential approximation for the nearly critical {G}alton-{W}atson
  process and occupation times of {M}arkov chains.
\newblock {\em Electron. J. Probab.}, 16:no. 51, 1381--1393, 2011.

\bibitem{archimedes}
Jim Pitman and Nathan Ross.
\newblock Archimedes, {G}auss, and {S}tein.
\newblock {\em Notices Amer. Math. Soc.}, 59(10):1416--1421, 2012.

\bibitem{pitman03}
Jim Pitman and Marc Yor.
\newblock Infinitely divisible laws associated with hyperbolic functions.
\newblock {\em Canad. J. Math.}, 55(2):292--330, 2003.

\bibitem{studio}
{P}omegranate Apps.
\newblock {\em MathStudio Version 5.4}.
\newblock Pomegranate Apps, Minneapolis MN, 2013.

\bibitem{pour}
Mohsen Pourahmadi.
\newblock Taylor expansion of {${\rm exp}(\sum ^{\infty }_{k=0}a_{k}z^{k})$}
  and some applications.
\newblock {\em Amer. Math. Monthly}, 91(5):303--307, 1984.

\bibitem{rao}
T.~J. Rao.
\newblock On the variance of the ratio estimator for {M}idzuno-{S}en sampling
  scheme.
\newblock {\em Metrika}, 10:89--91, 1966.

\bibitem{RW}
L.~C.~G. Rogers and David Williams.
\newblock {\em Diffusions, {M}arkov processes, and martingales. {V}ol. 1}.
\newblock Wiley Series in Probability and Mathematical Statistics: Probability
  and Mathematical Statistics. John Wiley \& Sons, Ltd., Chichester, second
  edition, 1994.
\newblock Foundations.

\bibitem{ross1}
Nathan Ross.
\newblock Fundamentals of {S}tein's method.
\newblock {\em Probab. Surv.}, 8:210--293, 2011.

\bibitem{sato}
Ken-iti Sato.
\newblock {\em L\'evy processes and infinitely divisible distributions},
  volume~68 of {\em Cambridge Studies in Advanced Mathematics}.
\newblock Cambridge University Press, Cambridge, 1999.
\newblock Translated from the 1990 Japanese original, Revised by the author.

\bibitem{ZhanShi}
Zhan Shi.
\newblock {\em Branching random walks}, volume 2151 of {\em Lecture Notes in
  Mathematics}.
\newblock Springer, Cham, 2015.
\newblock Lecture notes from the 42nd Probability Summer School held in Saint
  Flour, 2012, \'Ecole d'\'Et\'e de Probabilit\'es de Saint-Flour. [Saint-Flour
  Probability Summer School].

\bibitem{steutel70}
F.~W. Steutel.
\newblock {\em Preservation of infinite divisibility under mixing and related
  topics.}, volume~33 of {\em Mathematical Centre Tracts}.
\newblock Mathematisch Centrum, Amsterdam, 1970.

\bibitem{steutel}
F.~W. Steutel.
\newblock Some recent results in infinite divisibility.
\newblock {\em Stochastic Processes Appl.}, 1:125--143, 1973.

\bibitem{steutel-book}
Fred~W. Steutel and Klaas van Harn.
\newblock {\em Infinite divisibility of probability distributions on the real
  line}, volume 259 of {\em Monographs and Textbooks in Pure and Applied
  Mathematics}.
\newblock Marcel Dekker Inc., New York, 2004.

\bibitem{stieltjes1894}
T.-J. Stieltjes.
\newblock Recherches sur les fractions continues.
\newblock {\em Ann. Fac. Sci. Toulouse Sci. Math. Sci. Phys.}, 8(4):J1--J122,
  1894.

\bibitem{stieltjes}
Thomas~Jan Stieltjes.
\newblock {\em {\OE}uvres compl\`etes/{C}ollected papers. {V}ol. {I}, {II}}.
\newblock Springer-Verlag, Berlin, 1993.
\newblock Reprint of the 1914--1918 edition, Edited and with a preface and a
  biographical note by Gerrit van Dijk, With additional biographical and
  historical material by Walter Van Assche, Frits Beukers, Wilhelmus A. J.
  Luxemburg and Herman J. J. te Riele.

\bibitem{tenenbaum}
G{\'e}rald Tenenbaum.
\newblock {\em Introduction to analytic and probabilistic number theory},
  volume~46 of {\em Cambridge Studies in Advanced Mathematics}.
\newblock Cambridge University Press, Cambridge, 1995.
\newblock Translated from the second French edition (1995) by C. B. Thomas.

\bibitem{thorin}
Olof Thorin.
\newblock On the infinite divisibility of the lognormal distribution.
\newblock {\em Scand. Actuar. J.}, (3):121--148, 1977.

\bibitem{thorin-pareto}
Olof Thorin.
\newblock On the infinite divisibility of the {P}areto distribution.
\newblock {\em Scand. Actuar. J.}, (1):31--40, 1977.

\bibitem{thorisson}
Hermann Thorisson.
\newblock {\em Coupling, stationarity, and regeneration}.
\newblock Probability and its Applications (New York). Springer-Verlag, New
  York, 2000.

\bibitem{katti2}
W.~D. Warde and S.~K. Katti.
\newblock Infinite divisibility of discrete distributions. {II}.
\newblock {\em Ann. Math. Statist.}, 42:1088--1090, 1971.

\end{thebibliography}


\begin{thebibliography}{99}

\bibitem{applebaum}  Applebaum, David (2009)  L\'evy processes and stochastic calculus. Second edition. Cambridge Studies in Advanced Mathematics, 116. Cambridge University Press, Cambridge.

\bibitem{aldous-pgw} Aldous, D. (1998)  Tree-valued Markov chains and Poisson-Galton-Watson distributions. Microsurveys in discrete probability (Princeton, NJ, 1997), 1--20, DIMACS Ser. Discrete Math. Theoret. Comput. Sci., 41, Amer. Math. Soc., Providence, RI.

\bibitem{scale-invariant} Arratia, R. (1998)  On the central  role of the scale
invariant Poisson processes on $(0,\infty)$.  In D. Aldous and J. Propp editors, Microsurveys in Discrete
Probability, pages 21--41, DIMACS Series in  Discrete Math. and Theoret.
Comput. Sci., Amer. Math. Soc., Providence RI.

\bibitem{budalect}   Arratia, R. (2002) On the amount of dependence in the prime factorization of a uniform random integer. Contemporary combinatorics, 29--91, Bolyai Soc. Math. Stud., 10, J\'anos Bolyai Math. Soc., Budapest.


\bibitem{abt} Arratia, R., Barbour, A. D., and Tavar\'e, S. (2003)
Logarithmic combinatorial structures: a probabilistic approach. EMS Monographs in Mathematics. European Mathematical Society (EMS), Zurich.

 \bibitem{arratiagamma} Arratia, R., Baxendale P.	 (2013) Bounded size bias coupling:  a gamma function bound, and universal Dickman-function behavior. Preprint.



\bibitem{agk} Arratia, R., Garibaldi, S., Killian, J. (2013)  Asymptotic distribution for the birthday problem with multiple coincidences, via an embedding of the collision process.  Preprint.


\bibitem{alw} Arratia, R.,  Liggett, T. M., Williamson, M. J. (2013) Scale-free and power law distributions via 
fixed points and convergence
of  (thinning and conditioning) transformations. Preprint.

\bibitem{at92} Arratia, R. and Tavar\'e, S. (1992)  The cycle structure of random permutations. Ann. Probab. 20, no. 3, 1567--1591.

\bibitem{IPARCS} Arratia, R. and Tavar\'e, S. (1994)  
Independent process approximations for random combinatorial structures. Adv. Math. 104, no. 1, 90Ð154.


\bibitem{BR89}  Baldi, P. and Rinott, Y. (1989).
 On normal approximations of distributions in terms of dependency
 graphs,
 Annals of Probability {\bf 17}, 1646--1650.

\bibitem{BRS89}  Baldi, P., Rinott, Y.,  and Stein, C.  (1989).
A normal approximations for  the number of local maxima of a random function on a graph.
 In Probability, Statistics and Mathematics, Papers in Honor of Samuel Karlin.  T. W. Anderson, K.B. Athreya and D. L. Iglehart eds., Academic Press , 59--81.

\bibitem{BHJ92} Barbour, Holst, and Janson (1992). Poisson Approximation. Oxford Science Publications.

\bibitem{BGI} Bartroff, J., Goldstein, L. and I\c{s}lak, \"{U}. (2013)
Bounded size biased couplings for log concave distributions and concentration of measure for occupancy models. Preprint.

\bibitem{bertoin}  Bertoin, Jean (1996) L\'evy processes. Cambridge Tracts in Mathematics, 121. Cambridge University Press, Cambridge.

\bibitem{bertoin-sub}  Bertoin (1999) Subordinators: examples and applications. Lectures on probability theory and statistics (Saint-Flour, 1997), 1--91, Lecture Notes in Math., 1717, Springer, Berlin.



\bibitem{billingsley} Billingsley, Patrick. (1995) Probability and measure. Third edition. Wiley Series in Probability and Mathematical Statistics. A Wiley-Interscience Publication. John Wiley and Sons, Inc., New York.

\bibitem{bondesson} Bondesson, L. (1992) Generalized Gamma
Convolutions and Related Classes of Distributions and Densities.
Lecture Notes in Statistics, vol. 76.  Springer.

\bibitem{bondesson-thorin}  Bondesson, Lennart; Grandell, Jan; Peetre, Jaak (2008) The life and work of Olof Thorin (1912 -- 2004). Proc. Est. Acad. Sci. 57, no. 1, 18--25.

\bibitem{brown}  Brown, Mark. (2006) Exploiting the waiting time paradox: applications of the size-biasing transformation. Probab. Engrg. Inform. Sci. 20, no. 2, 195--230.



\bibitem{chen} Chen, L. H. Y. (1975) Poisson
approximation for dependent trials. Ann. Probability 3, no. 3,
  534--545. 

\bibitem{goldstein-book}  Chen, Louis H. Y., Goldstein, Larry, and Shao, Qi-Man. (2010) Normal Approximation by Stein's Method.  Springer.

\bibitem{chow-teicher} Chow, Yuan Shih; Teicher, Henry (1978) Probability theory. Independence, interchangeability, martingales. Springer-Verlag, New York-Heidelberg.

\bibitem{chung} Chung, Kai Lai (2001) A course in probability theory. Third edition. Academic Press, Inc., San Diego, CA.

\bibitem{C77} Cochran, W. (1977) Sampling Techniques John Wiley \& Sons, New York.

\bibitem{COX} Cox, D. R. (1969) ``Some Sampling Problems in
  Technology'', in New Developments in Survey Sampling, U. L. Johnson
  and H. Smith eds, Wiley Interscience, New York.


\bibitem{durrett} Durrett, R. (2010) Probability, Theory and
  Examples. Fourth edition, Cambridge University Press.


\bibitem{feller2} Feller, W. (1966) An Introduction to Probability and
its Applications, volume II.   Wiley.


\bibitem{ghosh} Ghosh, Subhankar; Goldstein, Larry (2011) Concentration of measures via size-biased couplings. Probab. Theory Related Fields 149, no. 1-2, 271 -- 278.

\bibitem{ghosh2} Ghosh, Subhankar; Goldstein, Larry (2011)
  Applications of size biased couplings for concentration of measures. Electron. Commun. Probab. 16, 70 -- 83.

\bibitem{penrose}
 Goldstein, L. and  Penrose, M. (2010) Normal approximation for coverage models over binomial point processes. Ann. Appl. Probab. 20, no. 2, 696--721.

\bibitem{gr06} Goldstein, L. and Reinert, G. (2006) Total variation
  distance for Poisson subset numbers. 
Ann. Comb., no. 3, 333--341.

\bibitem{GR96}
 Goldstein, L. and Rinott, Y. (1996) Multivariate normal
approximations by Stein's method and size bias couplings. Journal of
Applied Probability {\bf 33}, 1--17.

\bibitem{gordon}  Gordon, Louis (1993) Estimation for large successive samples with unknown inclusion probabilities. Adv. in Appl. Math. 14, no. 1, 89--122.

\bibitem{grimmett} Grimmett, G. R.; Stirzaker, D. R. (2001) Probability and
  random processes. Third edition. Oxford University Press, New York.

\bibitem{hh} Hansen, Morris H.; Hurwitz, William N. (1943) On the theory of sampling from finite populations.
Ann. Math. Statistics 14. 333--362

%\bibitem{harn and steutel} van Harn, K. and Steutel, F. W. (1995)
%Infinite divisibility and the
%waiting-time paradox. Comm. Statist. Stochastic Models 11 (1995), no. 3,
%527--540.

\bibitem{hensley} Hensley, D. (1986). The convolution powers of the
Dickman function.  J. London Math. Soc. (2) {\bf 33}, 395--406.

\bibitem{KP} Kafadar, K. and Prorok, P. C. (2009) Effect of length Biased
  Sampling of Unobserved Sojourn Times on the survival distribution
  when disease is screen-detected, Statistics in Medicine, Vol. 28,
  Issue 16, pps. 2116 -- 2146.

\bibitem{kallenberg} Kallenberg, Olav (2002) Foundations of modern probability. Second edition. Probability and its Applications (New York). Springer-Verlag, New York.

%\bibitem{kingman} Kingman, J.F.C. (1993) Poisson Processes.  Oxford
%Science Publications.

\bibitem{kaluza}  Kaluza, Th. (1928) \"{U}ber die Koeffizienten reziproker Potenzreihen. (German) Math. Z. 28, no. 1, 161--170. 

\bibitem{katti1} Katti, S. K.  (1967) Katti, S. K. Infinite divisibility of integer-valued random variables. Ann. Math. Statist. 38, 1306--1308.

\bibitem{kingman}  Kingman, J. F. C. (1993) Poisson processes. Oxford Studies in Probability, 3. Oxford Science Publications. The Clarendon Press, Oxford University Press, New York

\bibitem{leipnik}  Leipnik, R. (1981) The lognormal distribution and
  strong nonuniqueness of the moment problem. %Teor. Veroyatnost. i
                                %Primenen. 26, no. 4,
J. Prob. Appl. 863--865.

\bibitem{leipnik91}   Leipnik, Roy B. (1991) On lognormal random variables. I. The characteristic function. J. Austral. Math. Soc. Ser. B 32, no. 3, 327--347.


\bibitem{loeve} Loeve, Michel (1963) Probability theory. Third edition D. Van Nostrand Co., Inc., Princeton, N.J.-Toronto, Ont.-London.

\bibitem{minluk}  Luk, H. M. (1994) Stein's method for the Gamma distribution and related statistical applications. PhD Dissertation, University of Southern California, Los Angeles, CA

\bibitem{LPP} Lyons, R., Pemantle, R., and Peres, Y. (1995)
Conceptual proof of $L\log L$ criteria for mean behavior of
branching processes.  Ann. Probab. {\bf 23}, 1125--1138.

\bibitem{midzuno}  Midzuno, H. (1952) On the sampling system with probability proportionate to sum of sizes. Ann. Inst. Statist. Math., Tokyo 3. 99--107

\bibitem{obloj}  Obloj, Jan (2004) 
The Skorokhod embedding problem and its offspring. Probab. Surv. 1,
321--390. 

\bibitem{pakes} Pakes, A. G. and Khattree, R. (1992). Length-biasing, characterizations of laws and
the moment problem. Austral. J. Statist. 34 307--322.

\bibitem{pekoz}  Pekoz, Erol; Rollin, Adrian (2011) Exponential approximation for the nearly critical Galton-Watson process and occupation times of Markov chains. Electron. J. Probab. 16, no. 51, 138--1393.

\bibitem{archimedes}  Pitman, J. and Ross, N. (2012) Archimedes, Gauss, and
  Stein.  Notices of the AMS, vol. 59, number 10, pp.  1416--1421.

\bibitem{pitman03} Pitman, Jim; Yor, Marc (2003) Infinitely divisible laws associated with hyperbolic functions. Canad. J. Math. 55, no. 2, 292--330. 

%\bibitem{ppy} Perman, M., Pitman, J., and Yor, M. (1992). Size-biased
%sampling of Poisson point processes and excursions.  Probab. Th. Rel. Fields
%{\bf 92}, 21--39.


\bibitem{studio} Pomegranate Apps, MathStudio, Version 5.4, Minneapolis, MN.

\bibitem{pour}  Pourahmadi, Mohsen Taylor (1984) Expansion of
  exp($\sum_{k=0}^\infty a_kz^k$) and some applications. Amer. Math. Monthly 91, no. 5, 303--307. 

\bibitem{rao} Rao, T.J. (1966) On the variance of the ratio estimator for Midzuno-Sen sampling scheme.  Metrika 10, 89--91.

\bibitem{ross1}  Ross, Nathan (2011)  Fundamentals of Stein's
  method. Probab. Surv. 8, 210--293.

\bibitem{sato} Sato, Ken-iti (1999)  L\'evy processes and infinitely divisible distributions. Translated from the 1990 Japanese original. Revised by the author. Cambridge Studies in Advanced Mathematics, 68. Cambridge University Press, Cambridge.

 
\bibitem{steutel70}   Steutel, F. W. (1970) Preservation of infinite divisibility under mixing and related topics. Mathematical Centre Tracts, 33 Mathematisch Centrum, Amsterdam

\bibitem{steutel}  Steutel, W. F. (1973).  Some recent results in infinite
divisibility.  Stoch. Pr. Appl. {\bf 1}, 125--143.

\bibitem{steutel-book} Steutel, Fred W.; van Harn, Klaas (2004) Infinite divisibility of probability distributions on the real line. Monographs and Textbooks in Pure and Applied Mathematics, 259. Marcel Dekker, Inc., New York.


\bibitem{tenenbaum} Tenenbaum, G. (1995) Introduction to analytic and
probabilistic number theory. Cambridge studies in advanced mathematics,
{\bf 46}.  Cambridge University Press.

\bibitem{thorin-pareto}  Thorin, O. (1977) On the infinite divisibility of the Pareto distribution. Scand. Actuar. J., no. 1, 31--40. 

\bibitem{thorin} Thorin, O. (1977) On the infinite divisibility of the
lognormal distribution.  Scand. Actuarial J., 121--148.

\bibitem{thorisson} Thorisson, Hermann. (2000) Coupling, stationarity, and regeneration. Probability and its Applications (New York). Springer-Verlag, New York.
\bibitem{katti2}  
 Warde, W. D., Katti, S. K. (1971) Infinite divisibility of discrete distributions. II. Ann. Math. Statist. 42, 1088--1090.
\end{thebibliography}

\ignore{ OLD BIBLIOGRAPHY

}   % end ignore OLD BIBLIOGRAPHY

\end{document}